\documentclass[11pt,letterpaper]{article}

\usepackage{amsmath,amssymb,amsfonts,amsthm,mathrsfs}
\usepackage{graphicx}
\usepackage[usenames,dvipsnames]{color}

\usepackage[margin=1in]{geometry}

\theoremstyle{plain}
\newtheorem{thm}{Theorem}
\newtheorem{lem}[thm]{Lemma}
\newtheorem{cor}[thm]{Corollary}
\newtheorem{prop}[thm]{Proposition}

\theoremstyle{definition}
\theoremstyle{remark}
\newtheorem{claim}{Claim}

\newcommand{\nat}{\ensuremath {\mathbb N} }

\newcommand{\remove}[1] {}

\newcommand{\ex} {{\bf E}}
\newcommand{\pr} {{\bf Pr}}
\newcommand{\var} {{\bf Var}}

\newcommand{\cP} {\ensuremath{\mathcal P}}

\newcommand{\sG} {\ensuremath{\mathscr G}}

\newcommand{\set}[2][]{#1\{#2 #1\}}
\newcommand{\size}[2][]{#1|#2 #1|}
\newcommand{\abs}[2][]{#1|#2 #1|}
\newcommand{\floor}[2][]{#1\lfloor#2 #1\rfloor}
\newcommand{\ceil}[2][]{#1\lceil#2 #1\rceil}
\newcommand{\paren}[2][]{#1(#2 #1)}

\newcommand{\aas}{a.a.s.}

\newcommand{\across}{m}
\newcommand{\eqdef}{:=}



\def\eps{{\epsilon}}
\def\K{{\cal K}}
\def\bin{{\bf Bin}}
\def\w{{\omega}}
\def\d{{\overline d}}

\title{Arboricity and spanning-tree packing in random graphs\\ with an application to load balancing}
\author{Pu Gao\thanks{This author has been supported by NSERC PDF.}\\\\
University of Toronto\\
{\tt pu.gao@utoronto.ca}
  \and Xavier P\' erez-Gim\' enez\thanks{Partially supported by FORMALISM TIN2007-66523.} \\\\
University of Waterloo\\
{\tt xperez@uwaterloo.ca}\\\\
  \and Cristiane M.~Sato\\\\
University of Waterloo\\
{\tt cmsato@uwaterloo.ca}
}

\remove{Pu Gao\footnote{University of Toronto. Email: {\tt pu.gao@utoronto.ca} --- This author has been
    supported by NSERC PDF.}
  \and Xavier P\' erez-Gim\' enez\footnote{University of Waterloo. Email: {\tt xperez@uwaterloo.ca} --- Partially supported by FORMALISM TIN2007-66523}
  \and Cristiane M.~Sato\footnote{University of Waterloo. Email: {\tt cmsato@uwaterloo.ca}}
   }

\date{}

\begin{document}

\maketitle

\begin{abstract}
We study the arboricity $A$ and the maximum number $T$ of edge-disjoint spanning trees of the Erd\H os-R\' enyi random graph $\sG(n,p)$.
For all $p(n)\in [0,1]$, we show  that, with high probability, $T$ is precisely the minimum between $\delta$ and $\floor{m/(n-1)}$, where $\delta$ is the smallest degree of the graph and $m$ denotes the number of edges.
Moreover, we explicitly determine a sharp threshold value for $p$ such that:
above this threshold, $T$ equals $\floor{m/(n-1)}$ and $A$ equals $\ceil{m/(n-1)}$;
and below this threshold, $T$ equals $\delta$, and we give a two-value concentration result for the arboricity $A$ in that range.
Finally, we include a stronger version of these results in the context of the random graph process where the edges are sequentially added one by one. A direct application of our result gives a sharp threshold for the maximum load being at most $k$ in the two-choice load balancing problem, where $k\to\infty$.
\end{abstract}

%
%

\remove{

TO DO LIST:

- improve (c) and (c') in the deterministic results to use later for random geometric graphs

- add a chart for Cor\ref{cor:arboricity-hitting}.

- unify the format of references. Names should be I. Lastname. Some references should be updated.

}

\section{Introduction}

\paragraph{STP number and arboricity:}

The spanning-tree packing (STP) number of a graph is the  maximum
number of edge-disjoint spanning trees it contains. Computing this parameter is a
very classical problem in combinatorial optimization.
One of the
earliest results on the STP number is a min-max relation proved by
Tutte~\cite{Tutte61} and Nash-Williams~\cite{NashWilliams61}: the STP
number of a graph is the minimum value, ranging over all partitions
$\cP$ of the vertex set, of the ratio (rounded down) between the
number of edges across $\cP$ (i.e.\ edges with ends lying in different classes of $\cP$) and $\size{\cP}-1$.
This characterisation has important consequences in computer science, where the STP number has been used as a measure of network vulnerability in case of attack or edge failure (see~\cite{Gusfield83, Cunningham85}). Intuitively speaking, it provides information about the number of edges that must be destroyed in a connected network in order
to create a given number of new components.
In addition, finding edge-disjoint spanning trees in a graph is relevant to the design of efficient and robust communication protocols (see e.g.\ a seminal article by Itai and Rodeh~\cite{Itai88}).
There are two obvious upper bounds on the STP number of a graph with $n$ vertices: the minimum degree, since each spanning tree would need to use at least one edge
incident to each vertex; and the number of edges divided by $n-1$, since each
spanning tree has exactly $n-1$ edges.
For further information on the STP number, we refer the reader to a survey by Palmer~\cite{Palmer01} on this topic.

Another closely related graph parameter that has been widely studied is the arboricity of a
graph --- i.e.~the minimum number of subforests needed to cover all of its edges.
A trivial lower bound on the arboricity  of a graph with $n$ vertices is the number of edges divided by $n-1$, since we cannot do better than covering all the edges with a set of edge-disjoint spanning trees.
Nash-Williams~\cite{Nash-Williams} also provided a min-max relation for the
arboricity of a graph,
which yields a natural interpretation of arboricity as a measure of density of the subgraphs of a graph.
This makes arboricity a useful notion in computer science, since the problem of determining the existence of dense subgraphs in large graphs is relevant to many applications in real world domains like social networking or internet computing.
In fact, finding such dense subgraphs and other related problems can often
be efficiently solved in linear time for any class of graphs with bounded arboricity (see~\cite{Chiba85,Goel06}). This includes important families of graphs such as all minor-closed classes (e.g.~planar graphs and graphs with bounded treewidth) and random graphs generated by the preferential attachment model.
Another relevant feature of arboricity is its intimate connection with the $k$-orientability of a graph and its application to certain load-balancing problems (see the discussion below on this topic).

Finding the STP number and the arboricity of a given graph are among the most
successful applications of matroids in combinatorial
optimization. Both problems can be formulated as matroid union problems
and thus can be solved in polynomial time. For more details,
see~\cite[Chapter 51]{Schrijver03}.

\paragraph{Arboricity, $k$-orientability and load balancing:}
The $k$-orientability problem consists in determining whether a graph admits an orientation of its edges so that each vertex has indegree at most $k$. As established by Hakimi~\cite{Hakimi}, the $k$-orientability of a graph is determined by the density of the densest subgraph, which brings a strong connection between $k$-orientability and arboricity.
 On the other hand, the $k$-orientability problem of a (random) graph on $n$ vertices and $m$ edges is equivalent to determining whether the maximum load is at most $k$ in the following load-balancing scenario:
$m$ balls (jobs) are assigned to $n$ bins (machines) in a way that each ball must pick between two randomly chosen bins, and we wish to minimise the load of the bins by allowing at most $k$ balls in each bin. This subject has received a lot of attention,
since the seminal result by Azar, Broder, Karlin and Upfal~\cite{Azar99} on the multiple-choice paradigm of load balancing. They consider a related online setting in which $n$ balls are sequentially thrown into $n$ bins, and each ball is allowed to choose between $h$ given random bins; and prove that the maximum load among all bins can be significantly reduced if each ball has $h\ge 2$ choices rather than one. Research on the power of choice in load balancing has since been very fruitful. We refer readers to the survey by Mitzenmacher, Richa and Sitaraman~\cite{MRS} for a detailed account of the topic. Cain, Sanders and Wormald~\cite{CSW} and Fernholz and Ramachandran~\cite{FR} simultaneously determined a threshold for a random graph (corresponding to $h=2$) to be $k$-orientable. The random hypergraph case ($h\ge 2$) was studied by a few authors due to its applications in Cuckoo hashing~\cite{DGMMPR} and disk scheduling~\cite{CSW}. See~\cite{DGMMPR,FM,FP} for $h\ge 2$ and $k=1$, and~\cite{FMP} for $h\ge 2$ and general $k$. A more general case where each ball can take $1\le w\le h$ copies and be assigned to $w$ distinct bins was studied by the first author and Wormald~\cite{GW4} for sufficiently large but constant $k$. Lelarge closed the gap for small values of $k$ in~\cite{Lelarge}. To our knowledge, all previous work focuses on constant values of $k$, and leaves open the case of $k\to\infty$. Intuitively, load should balance ``better" for larger $k$, but the previous proofs dealing with constant $k$ do not generalise to $k\to\infty$.

\paragraph{STP number and arboricity of a random graph:}
It is then relevant to study the behaviour of the STP number and the
arboricity for the Erd\H os-R\' enyi random graph $\sG(n,p)$, in which the vertex set is
$[n]$ and each of the $\binom{n}{2}$ possible edges is included independently
with probability $p$ (where $p=p(n)$ is a function of $n$).
It is a well-known fact that, for $p=(\log n -\omega(1))/n$, the random graph $\sG(n,p)$ is a.a.s.\footnote{We say that a sequence of events $H_n$ holds \emph{asymptotically almost surely} (a.a.s.) if $\lim_{n\to\infty}\pr(H_n)=1$.}\ disconnected (see e.g.\ Theorem~7.3 in~\cite{Bollobas01}), and hence the STP number is zero.
Palmer and Spencer~\cite{PalmerSpencer95} showed that a.a.s.\ the STP number of $\sG(n,p)$ equals the minimum degree whenever this has constant value $k$, which happens when $p$ is around $(\log n+(k-1)\log\log n+O(1))/n$. In fact, they proved a stronger hitting-time result in the context of the evolution of $\sG(n,p)$ when $p$ grows gradually from $0$ to $1$, and showed that a.a.s.\ the precise time when the minimum degree first becomes $k$ coincides with the time when $k$ edge-disjoint spanning trees first appear.
Moreover, Catlin, Chen and Palmer~\cite{CatlinChenPalmer93} studied the denser case of $p = C(\log n/n)^{1/3}$, where $C>0$ is a sufficiently large constant, and determined the STP number and the arboricity of $\sG(n,p)$ to be a.a.s.\ equal to $\floor{m/(n-1)}$ and $\ceil{m/(n-1)}$, respectively, where $m$ denotes the number of edges.
In a recent unpublished manuscript, Chen, Li and Lian~\cite{ChenLiLian13}
proved that, for any $(\log n +\omega(1))/n \leq p
\leq 1.1\log n/n$, a.a.s.\ the STP number of $\sG(n,p)$ equals the minimum degree. They also observed that this property a.a.s.\ does not hold  for $p\geq
51\log n/n$, and posed the question of what is the smallest value of $p$ such that the STP number of $\sG(n,p)$ differs from the minimum degree.

\paragraph{Outline of our contribution:}
%
In this paper we strengthen the previous work, and characterise the STP number and the arboricity of $\sG(n,p)$.
A direct application of our results gives a sharp threshold for the maximum load being at most $k$ in the two-choice load balancing problem, where $k\to\infty$.
Our methods rely on a successful combination of some combinatorial optimisation tools together with several probabilistic techniques, and may be hopefully extended in future research to address other relevant related problems in the area.
We first prove that for all $p(n)\in [0,1]$, the STP number is a.a.s.\ the minimum
between $\delta$ and $\floor{m/(n-1)}$, where $\delta$ and $m$ respectively denote the minimum degree and the number of edges of $\sG(n,p)$ (see \textbf{Theorem~\ref{thm:main}}). Note that the quantities $\delta$ and $\floor{m/(n-1)}$ above correspond to the two trivial upper bounds observed earlier for arbitrary graphs, so this implies that we can a.a.s.\ find a best-possible number of edge-disjoint spanning trees in $\sG(n,p)$.
Our argument uses several properties of $\sG(n,p)$ in order to bound the number of crossing edges between subsets of vertices with certain restrictions, and then applies the characterisation of the STP number by Tutte and Nash-Williams restated as Theorem~\ref{thm:nw}.
Moreover, we determine the ranges of $p$ for which the STP number takes each of these two values: $\delta$ and $\floor{m/(n-1)}$. In spite of the fact that the property $\set{\delta\le\floor{m/(n-1)}}$ is not necessarily monotonic with respect to $p$, we show that it has a sharp threshold at $p\sim\beta \log n/n$, where $\beta\approx 6.51778$ is a constant defined in \textbf{Theorem~\ref{thm:cases}}. Below this threshold, the STP number of $\sG(n,p)$ is a.a.s.\ equal to $\delta$; and above the threshold it is a.a.s.\ $\floor{m/(n-1)}$.
In particular, this settles the question raised by  Chen, Li and Lian~\cite{ChenLiLian13}.
We also include a stronger version of these results in the context of the random graph process in which $p$ gradually grows from $0$ to $1$ (or, similarly, the edges are added one by one). This provides a full characterisation of the STP number that holds a.a.s.\ simultaneously during the whole random graph process (see \textbf{Theorem~\ref{thm:process}}).
The argument combines a more accurate version of the same ideas used in the analysis of the STP number of $\sG(n,p)$ together with multiple couplings of $\sG(n,p)$ at different values of $p$.
In addition, the article contains several results about the arboricity of $\sG(n,p)$.
As an almost direct application of our result on the STP number, for $p$ above the threshold $\beta \log n/n$, we determine the arboricity of $\sG(n,p)$  to be a.a.s.\ equal to $\ceil{m/(n-1)}$. This significantly extends the range of $p$ in the result by Catlin, Chen and Palmer~\cite{CatlinChenPalmer93}. We further prove that for all other values of $p$, the arboricity of $\sG(n,p)$ is concentrated on at most two values (see \textbf{Theorem~\ref{thm:arboricity}}).
In order to prove this for the case $pn\to\infty$, we add $o(n)$ edges to $\sG(n,p)$ in a convenient way that guarantees a full decomposition of the resulting graph into edge-disjoint spanning trees. This construction builds upon some of the ideas that we previously use to study the STP number.
The case $pn=O(1)$ uses different proof techniques which rely on the structure of the $k$-core of $\sG(n,p)$ together with the Nash-Williams characterisation of arboricity restated as Theorem~\ref{thm:nw-arb}.
Finally, some of the aforementioned results on the arboricity are also given in the more precise context of the random graph process (see \textbf{Theorem~\ref{thm:arboricity-hitting}} and \textbf{Corollary~\ref{cor:arboricity-hitting}}), similarly as we did for the STP number.
As a direct corollary of our result on arboricity, we determine a sharp threshold for the $k$-orientability of $\sG(n,m)$ where $k\to\infty$ (see \textbf{Theorem~\ref{thm:loadbalancing}}). This successfully settles the load-balancing problem in the scenario where $m=\omega(n)$ balls are allocated into $n$ bins, and each ball has two choices, uniformly at random chosen from $[n]$. We prove that in this case, balls can be allocated so that most bins receive an almost even load. This extends the result by Cain, Sanders and Wormald~\cite{CSW} and Fernholz and Ramachandran~\cite{FR} to the case of $k\to\infty$.

\paragraph{Related work:}
The behaviour of the STP number and the arboricity has been also studied in other models of
random graphs. Frieze and \L uczak~\cite{FriezeLuczak90} considered the
random directed graph in which each vertex chooses $k$ out-neighbours
uniformly at random, with fixed~$k$. This graph has $k$ disjoint
spanning trees with probability going to $1$ (where the orientation of
the arcs is ignored). Some variants of arboricity have also been
studied. The linear arboricity of a graph is the minimum number of
forests consisting only of paths needed to cover all edges of the
graph. This parameter was studied by McDiarmid and
Reed~\cite{McDiarmidReed90} for random regular graphs.

%
%

\section{Main results}\label{sec:results}

Let $\sG(n,p)$ denote the random graph with vertex set $[n]$ such that
each possible edge in $\set{\set{u,v}:u,v\in [n],\ u\neq v}$ is
included independently with probability~$p$.
In this article, we regard $p$ as a function of $n$, and consider asymptotic statements as $n\to\infty$.
 Given a sequence of
events $(H_n)_{n\in \nat}$, we say that $H_n$ happens \emph{asymptotically
almost surely} (\aas)  if $\pr(H_n)\to 1$ as $n\to\infty$.
Given real sequences $a_n$ and $b_n$ (possibly taking negative
values), we write: $a_n=O(b_n)$ if there is a constant $C>0$ such that
$|a_n|\le C |b_n|$ for all $n$;  $a_n=o(b_n)$ if eventually $b_n\ne0$ and
$\lim_{n\to\infty} a_n/b_n=0$; $a_n=\Omega(b_n)$ if eventually $a_n\ge
0$ and $b_n=O(a_n)$; $a_n=\omega(b_n)$ if eventually $a_n\ge 0$ and $b_n=o(a_n)$;
$a_n=\Theta(b_n)$ if eventually $a_n\ge 0$, $a_n=O(b_n)$ and $a_n=\Omega(b_n)$; and finally $a_n\sim b_n$ if $a_n=(1+o(1))b_n$.
In particular, in this paper, all constants involved in these notations do not depend on the $p$ under discussion. For instance, if we have $a_n=\Omega(b_n)$, where $b_n$ may be an expression involving $p=p(n)$, then it means that there are constants $C>0$ and $n_0$ (both independent of $p$), such that $a_n\ge C|b_n|$ uniformly for all $n\ge n_0$ and for all $p$ in the range under discussion.
We also assume in this paper that $n$ is greater than some absolute constant (e.g.\ in a few places we require $n\ge 2$), which does not depend on any variables under discussion. We often omit this assumption in the statements of lemmas and theorems.

Given a graph $G$, let $m(G)$ be the number of edges
of $G$ and let $\delta(G)$ denote the minimum degree of $G$.
Let $T(G)$ be the STP number of $G$ --- i.e.~the maximum number of
edge-disjoint spanning trees in~$G$ (possibly $0$ if $G$ is
disconnected).
\begin{thm}
  \label{thm:main}
  For every $p = p(n)\in [0,1]$, we have that \aas
  \begin{equation*}
    T(\sG(n,p)) =
    \min\set[\bigg]{
      \delta(\sG(n,p)),
      \floor[\bigg]{\frac{m(\sG(n,p))}{n-1}}}.
  \end{equation*}
\end{thm}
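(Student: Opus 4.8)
The plan is to prove the nontrivial inequality $T(\sG(n,p)) \geq \min\{\delta, \floor{m/(n-1)}\}$ (the reverse inequality holds deterministically for every graph, as noted in the introduction), and this reduces, via the Tutte--Nash-Williams characterization (Theorem~\ref{thm:nw}), to showing that a.a.s.\ every partition $\cP$ of $[n]$ satisfies
\begin{equation*}
  e_{\sG(n,p)}(\cP) \geq \paren[\big]{\size{\cP}-1}\cdot\min\set[\big]{\delta,\floor{m/(n-1)}},
\end{equation*}
where $e_G(\cP)$ denotes the number of edges crossing $\cP$. I would split into regimes of $p$. For very sparse $p$ (below roughly $(\log n - \omega(1))/n$) the graph is a.a.s.\ disconnected, $\delta=0$, and the statement is trivial. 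For $p$ up to (say) a large constant times $\log n/n$, the bound $\min\{\delta,\floor{m/(n-1)}\}$ is $O(\log n)$, and here the key is to rule out partitions with few crossing edges; I would follow the Palmer--Spencer / Chen--Li--Lian style of argument, bounding by a union over partition ``shapes''. For large $p$ (bounded away from $0$, or at least $p = \omega(\log n/n)$) we have $\delta \sim np$ but $\floor{m/(n-1)} \sim np/2$, so the min is $\floor{m/(n-1)}$ and the required bound $e(\cP)\geq(\size{\cP}-1)m/(n-1)$ is an almost-tight density statement that should follow from concentration of edge counts across all cuts, handled again by a union bound over the (few relevant) partition types.

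The main technical work, and the hardest step, is the union bound over partitions in the sparse-to-moderate regime. A partition $\cP$ with $k$ parts that violates the inequality has fewer than $k\cdot\min\{\delta,\floor{m/(n-1)}\}$ crossing edges; the dangerous partitions are those with many small parts (a part of size~$1$, say a vertex $v$, contributes $\deg(v)\geq\delta$ crossing edges, which alone nearly meets the per-part budget, so violators are forced to have parts that are ``internally dense''). I would classify partitions by the multiset of part sizes, and for each type estimate the probability that the number of crossing edges is abnormally small, then sum over all types, controlling the count of partitions of each type by binomial coefficients. Getting the entropy-vs-probability trade-off to close across the whole range simultaneously — in particular near $p \approx \log n/n$ where $\delta$ is small and the graph is barely connected — is the delicate part, and may require treating a window around the connectivity threshold separately (using that a.a.s.\ the only small-degree vertices are isolated-ish and few in number) and invoking sharper structural facts about $\sG(n,p)$ near that threshold.

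A cleaner packaging, which I would try first, is to isolate a deterministic sufficient condition: show there is a list of ``nice'' properties $Q_1,\dots,Q_r$ of a graph $G$ (e.g.\ concentration of $e_G(S, [n]\setminus S)$ for all $S$ of each size, a lower bound on $\delta$, control on the number and isolation of low-degree vertices, expansion of small sets) such that any graph satisfying all $Q_i$ automatically has $T(G) = \min\{\delta, \floor{m/(n-1)}\}$ via Theorem~\ref{thm:nw}; and then separately verify that a.a.s.\ $\sG(n,p)$ has all the $Q_i$, for every $p\in[0,1]$. This separates the combinatorial-optimization content from the probabilistic content, matches the ``combinatorial tools $+$ probabilistic techniques'' description in the outline, and makes the regime analysis a matter of choosing which $Q_i$ to apply for which $p$. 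I expect the proof in the paper takes essentially this route, with the bulk of the effort in the crossing-edge concentration lemmas and in the union bound over partition shapes near the connectivity threshold.
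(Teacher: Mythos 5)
Your final paragraph's ``cleaner packaging'' is indeed exactly the paper's route: it isolates two deterministic sufficient conditions (Propositions~\ref{prop:a} and~\ref{prop:b}, one for the regime where $\delta$ is small relative to $\d$ and one for the regime where it is not), and then verifies via Lemmas~\ref{lem:avgdeg}--\ref{lem:expansion_easy} that $\sG(n,p)$ a.a.s.\ satisfies the relevant list of properties in each range of $p$. So your high-level strategy is right.

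However, there is a genuine gap: you never say how to pass from the listed properties $Q_1,\dots,Q_r$ to the Tutte--Nash-Williams inequality for \emph{every} partition, and the method you do sketch for this step --- a union bound over partition ``shapes'' with a per-type probability estimate --- does not work in the dense regime. When the minimum is $\floor{m/(n-1)}$, the required inequality $\across(\cP)\ge (\d/2)(|\cP|-1)$ holds \emph{with equality} for the all-singletons partition, so there is zero slack to absorb any failure probability, and there are exponentially many partitions overall; no entropy-versus-deviation trade-off closes here. The missing ingredient is a purely deterministic reduction from arbitrary partitions to tractable ones. In the paper this takes two forms: in the sparse regime (Proposition~\ref{prop:a}), any part consisting only of $\eps$-light vertices is refined into singletons (this cannot decrease the ratio, by the non-adjacency of light vertices), after which edges are counted separately for light singletons, for small parts containing a non-light vertex, and for the $O(1)$ large parts; in the dense regime (Proposition~\ref{prop:b}), one iteratively splits off from any medium-sized part a vertex $w$ of internal degree at most $t/2$ (which exists by the sparse-induced-subgraph condition), and uses the cut lower bound $\size{E(S,\overline S)}\ge t$ to show the ratio $\across(\cP)/(|\cP|-1)$ does not increase, reducing to ``simple'' partitions whose parts are all large or singletons. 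Without some such deterministic reduction your plan stalls precisely at the step you identify as the hardest.
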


\begin{thm}
  \label{thm:cases}
  Let $\beta=2/\log(e/2)\approx 6.51778$.  Then
  \begin{itemize}
  \item[(i)] if $p = \frac{\beta(
    \log n-\log\log n/2) -\omega(1)}{n-1}$, then \aas\
    $\delta(\sG(n,p)) \le \floor[\big]{\frac{m(\sG(n,p))}{n-1}}$ and so
    $T(\sG(n,p)) = \delta(\sG(n,p))$;
  \item[(ii)] if $p = \frac{\beta
    (\log n -\log\log n/2) +\omega(1)}{n-1}$, then \aas\ $\delta(\sG(n,p)) >
    \floor[\big]{\frac{m(\sG(n,p))}{n-1}}$ and so $T(\sG(n,p)) =
    \floor[\big]{\frac{m(\sG(n,p))}{n-1}}$.
  \end{itemize}
\end{thm}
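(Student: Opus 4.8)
The plan is to compare the minimum degree $\delta:=\delta(\sG(n,p))$ directly with $d^\ast:=\floor{m(\sG(n,p))/(n-1)}$ and then appeal to Theorem~\ref{thm:main}, which says $T(\sG(n,p))=\min\set{\delta,d^\ast}$ \aas. Write $\lambda:=(n-1)p$ and $\mu:=np/2=\ex[m(\sG(n,p))]/(n-1)$, so that $\mu=\lambda/2+O(\lambda/n)$; in the range $\lambda=\beta(\log n-\tfrac12\log\log n)+s$ with $\lambda=\Theta(\log n)$ this reads $\mu=\tfrac\beta2\log n-\tfrac\beta4\log\log n+s/2+o(1)$. Since $m(\sG(n,p))\sim\bin(\binom n2,p)$, a Chernoff bound shows that $\size{m(\sG(n,p))/(n-1)-\mu}=o(1)$ \aas\ whenever $\lambda=O(\log n)$, hence \aas\ $\floor\mu-1\le d^\ast\le\floor\mu+1$. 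Thus everything reduces to locating $\delta$ with respect to $\floor\mu$.

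The heart of the matter is an asymptotic estimate, for integers $d=\mu+O(1)$, of the expected number $N_{\le d}$ of vertices of degree at most $d$ (recall $\delta\le d$ iff $N_{\le d}>0$, and $\ex N_{\le d}=n\,\pr[\bin(n-1,p)\le d]$). Near index $d$ the ratio of consecutive terms of $\bin(n-1,p)$ is $(1+o(1))\lambda/d\to 2>1$, so the lower tail is comparable to its top term, and using $d,\lambda=\Theta(\log n)=o(\sqrt n)$ to pass to the Poisson approximation,
\[
 \ex N_{\le d}=\Theta\!\left(n\binom{n-1}{d}p^{d}(1-p)^{\,n-1-d}\right)=\Theta\!\left(n\,\frac{\lambda^{d}}{d!}\,e^{-\lambda}\right).
\]
Stirling's formula with $d\sim\lambda/2$ turns this into $\Theta\!\big(n\lambda^{-1/2}(2/e)^{\lambda/2}\big)=\Theta\!\big(n\lambda^{-1/2}e^{-\frac\lambda2\log(e/2)}\big)$; substituting $\lambda=\beta(\log n-\tfrac12\log\log n)+s$ and invoking the defining identity $\beta\log(e/2)=2$, the exponential factor becomes $n^{-1}(\log n)^{1/2}e^{-\frac12 s\log(e/2)}$, so
\[
 \ex N_{\le d}=\Theta\!\left(e^{-\frac12 s\log(e/2)}\right)
\]
for every integer $d=\mu+O(1)$ (a bounded change in $d$ only affects the hidden constant). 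Carrying out this estimate carefully — in particular verifying that the second-order threshold term is exactly $-\tfrac12\log\log n$ — is the main obstacle, and it is precisely what singles out $\beta=2/\log(e/2)$.

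The two cases follow at once. In case (ii), $s\to+\infty$, so $\ex N_{\le d}\to0$; taking $d=\floor\mu+1$ (an \aas\ upper bound for $d^\ast$), Markov's inequality gives $N_{\le d}=0$ \aas, i.e.\ $\delta\ge\floor\mu+2>d^\ast$ \aas. In case (i), $s\to-\infty$, so $\ex N_{\le d}\to\infty$; to conclude $N_{\le d}>0$ \aas\ I would use a second moment argument. Since the degrees of two distinct vertices are determined by disjoint sets of potential edges apart from the single pair joining them, one computes $\mathrm{Cov}(\mathbf 1_{\deg u\le d},\mathbf 1_{\deg v\le d})=p(1-p)\,\pr[\bin(n-2,p)=d]^{2}=O(p)\,\pr[\deg u\le d]^{2}$, hence $\var N_{\le d}\le\ex N_{\le d}+O(p)(\ex N_{\le d})^{2}=o\big((\ex N_{\le d})^{2}\big)$, and Chebyshev gives $N_{\le d}>0$ \aas\ for $d=\floor\mu-1\le d^\ast$, i.e.\ $\delta\le d^\ast$ \aas. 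In both cases the stated inequality between $\delta$ and $d^\ast$ holds \aas, and Theorem~\ref{thm:main} then gives $T(\sG(n,p))=d^\ast$ in case (ii) and $T(\sG(n,p))=\delta$ in case (i). Finally, the hypothesis of case (ii) also permits $p$ considerably larger than $\log n/n$ (in case (i) necessarily $\lambda=O(\log n)$, so this does not arise there); for such $p$, where the Poisson approximation above need not be accurate, one argues instead that Chernoff's lower tail plus a union bound over the $n$ vertices give $\delta\ge(n-1)p-\sqrt{4(n-1)p\log n}$ \aas, which exceeds $m(\sG(n,p))/(n-1)\ge d^\ast$ \aas\ once $np$ is at least a sufficiently large multiple of $\log n$, so case (ii) still holds.
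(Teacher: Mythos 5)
Your proposal is correct and follows essentially the same route as the paper: reduce to comparing $\delta$ with $\floor{m/(n-1)}$ via Theorem~\ref{thm:main}, concentrate $m$ by Chernoff, and then run a first-moment argument (binomial lower tail $\asymp$ its top term, Poisson approximation, Stirling) for case~(ii) and a second-moment argument with exactly the covariance identity $p(1-p)\,\pr[\bin(n-2,p)=d]^2$ for case~(i); the paper packages these computations into Lemmas~\ref{lem:binomial}, \ref{lem:secondm}, \ref{lem:min_ranges_easy} and~\ref{lem:min_ranges}, and handles large $p$ in case~(ii) by the same crude Chernoff-plus-union-bound device (Lemma~\ref{lem:newMindeg}(iv)). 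The only loose end is that case~(i) also allows $p$ with $np=o(\log n)$, where your estimate (which needs $d=\Theta(\log n)\to\infty$, and where $\floor{\mu}-1$ may even be negative) does not apply; there you should simply observe, as the paper does via Lemma~\ref{lem:newMindeg}(i), that a.a.s.\ $\delta=0\le\floor{m/(n-1)}$.
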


Consider the random graph process $G_0,G_1,\ldots,G_{\binom{n}{2}}$ defined as follows: for each $m=0,\ldots,\binom{n}{2}$, $G_m$ is a graph with vertex set $[n]$; the graph $G_0$ has no edges; and, for
each $1\le m\le \binom{n}{2}$, the graph $G_{m}$ is obtained by
adding one new edge to $G_{m-1}$ chosen uniformly at random among the
edges not present in $G_{m-1}$. Equivalently, we can choose uniformly at random a permutation $(e_1,\ldots,e_{\binom{n}{2}})$ of the edges of the complete graph with vertex set $[n]$, and define each $G_m$ to be the graph on vertex set $[n]$ and edges $e_1,\ldots,e_m$.

The following theorem is a strengthening of Theorems~\ref{thm:main} and~\ref{thm:cases} in the context of the random graph process just described. Note that the a.a.s.\ statements in Theorem~\ref{thm:process} refer to events that hold \textbf{simultaneously} for all $m$ in a certain specified range, as we add edges one by one.
\begin{thm}
  \label{thm:process}

Let $\beta=2/\log(e/2)\approx 6.51778$. The following holds in the random graph process $G_0,G_1,\ldots,G_{\binom{n}{2}}$.
\begin{itemize}
\item[(i)]A.a.s.\ $T(G_m)=\min\set[\big]{\delta(G_m),
    \floor[\big]{m/(n-1)}}$ for every
  $0\le m\le\binom{n}{2}$.
\item[(ii)]
Moreover, for any constant $\epsilon>0$, a.a.s.\
\begin{itemize}
\item[$\bullet$] $\delta(G_m) \le \floor[\big]{m/(n-1)}$ for every
  $0\le m\le  \frac{(1-\epsilon)\beta}{2} n \log n$, and
\item[$\bullet$] $\delta(G_m) > \floor[\big]{m/(n-1)}$ for every
  $\frac{(1+\epsilon)\beta}{2} n \log n\le m\le\binom{n}{2}$.
\end{itemize}
\end{itemize}
\end{thm}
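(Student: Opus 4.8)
The plan is to deduce Theorem~\ref{thm:process} from quantitative refinements of the arguments behind Theorems~\ref{thm:main} and~\ref{thm:cases}, exploiting four monotonicity facts along the process --- $\delta(G_m)$, $\floor{m/(n-1)}$, $e_{G_m}(\cP)$ (for a fixed partition $\cP$), and $T(G_m)$ are all non-decreasing in $m$ --- together with couplings of $G_m$ with $\sG(n,p)$ at suitable densities.

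Part~(ii) is purely about the minimum degree. Since $\floor{m/(n-1)}$ is constant on each window $[t(n-1),(t+1)(n-1))$ while $\delta(G_m)$ is non-decreasing, the inequality $\delta(G_m)\le\floor{m/(n-1)}$ need only be checked at $m=t(n-1)-1$ (the right endpoint of each window) and $\delta(G_m)>\floor{m/(n-1)}$ only at $m=t(n-1)$ (the left endpoint). For the first bullet there are only $O(\log n)$ relevant values of $t$, and for each I would run a second-moment argument on the number $X$ of vertices of degree at most $t-1$ in $G_{t(n-1)-1}$: a short computation with the (Poisson-approximated) binomial shows $\ex X=n^{1-2t/(\beta\log n)+o(1)}$, which is $\ge n^{\epsilon+o(1)}\to\infty$ in this range, while $\var X=O(\ex X)$, so $\pr(X=0)=O(1/\ex X)=o(1/\log n)$; a union bound over $t$ finishes it. For the second bullet there are $\Theta(n)$ values of $t$, but the expected number of vertices of degree at most $t$ in $G_{t(n-1)}$ is again $n^{1-2t/(\beta\log n)+o(1)}$, which is $\le n^{-\epsilon}$ just above $t=\tfrac{(1+\epsilon)\beta}{2}\log n$ and decays polynomially with growing exponent as $t$ increases; splitting the range of $t$ into $[\tfrac{(1+\epsilon)\beta}{2}\log n,\beta\log n]$, $[\beta\log n,2\beta\log n]$ and $[2\beta\log n,n/2]$ and applying Markov's inequality makes the total failure probability $o(1)$ in each piece. (In the $O(n)$-edge regime one instead uses that $G_m$ is a.a.s.\ disconnected, so $\delta(G_m)=0$.)

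For part~(i), since $T(G_m)\le\min\{\delta(G_m),\floor{m/(n-1)}\}$ always holds, only the matching lower bound is at issue, and by monotonicity of $T(G_m)$ it suffices to verify it at finitely many checkpoints. Writing $\tau_k$ for the hitting time at which $\delta$ first reaches $k$, it is enough to show (a) $T(G_{\tau_k})\ge k$ for all $k$, and (b) $T(G_{t(n-1)})\ge t$ for all $t$: indeed, for any $m$, if $\delta(G_m)=k\le\floor{m/(n-1)}$ then $T(G_m)\ge T(G_{\tau_k})\ge k=\delta(G_m)$, and if $t=\floor{m/(n-1)}<\delta(G_m)$ then $m\ge t(n-1)$ and $T(G_m)\ge T(G_{t(n-1)})\ge t$. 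By part~(ii), a.a.s.\ only $k=O(\log n)$ values actually arise in~(a), while in~(b) the checkpoints run over all $t$ from $\approx\tfrac{\beta}{2}\log n$ up to $n/2$. Each such lower bound is, via the Tutte--Nash-Williams characterisation (Theorem~\ref{thm:nw}), exactly the statement that every vertex-partition $\cP$ of $[n]$ satisfies $e_G(\cP)\ge k(\size{\cP}-1)$ (resp.\ $\ge t(\size{\cP}-1)$) --- precisely the type of crossing-edge estimate established in the proof of Theorem~\ref{thm:main}. I would re-run that analysis with two modifications: make the failure probabilities explicit, so they are $o(1/\log n)$ for the $O(\log n)$ checkpoints of~(a) and super-polynomially small once the density is $\omega(\log n/n)$ (so the $\Theta(n)$ checkpoints of~(b) cost nothing in the union bound); and replace ``$\sG(n,p)$'' by ``$G_{\tau_k}$''. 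For the latter I would use that $\tau_k$ is concentrated in a short interval and sandwich $G_{\tau_k}$ between two copies of $\sG(n,p)$ at densities just below and just above $\tau_k/\binom{n}{2}$, transferring the crossing-edge bounds through the coupling.

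The main obstacle is step~(b) near the threshold: at $m=t(n-1)$ the inequality $e_{G_m}(\cP)\ge t(\size{\cP}-1)$ is attained with equality by the partition into singletons, so there is no slack, and one must show that every other partition satisfies it, with the estimates from the proof of Theorem~\ref{thm:main} tightened enough to survive a union bound over $\Theta(n)$ checkpoints (and over partitions). A secondary difficulty is making the crossing-edge arguments go through at the random time $\tau_k$ rather than at a deterministic $\sG(n,p)$; controlling the fluctuation of $\tau_k$ and choosing the width of the coupling window accordingly is how I would handle it.
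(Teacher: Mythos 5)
Your strategy is sound and would work, but it organises the proof differently from the paper. For part~(ii) the paper passes to the continuous process $(\sG_p)$ and proves the stronger Theorem~\ref{thm:continuousCases} by coupling $\sG_{p_i}\subseteq\sG_{p_{i+1}}$ along a sequence of densities spaced so that the failure probabilities from Lemma~\ref{lem:min_ranges} form a convergent series; your reduction to the $O(\log n)$ window endpoints $t(n-1)-1$ and the left endpoints $t(n-1)$, followed by first/second moment computations, uses the same underlying estimates (Lemma~\ref{lem:binomial}, Lemma~\ref{lem:secondm}) but with a cruder union bound that suffices for the $\epsilon$-version stated in Theorem~\ref{thm:process}(ii) (it would not give the $\omega(1)$-width window of Theorem~\ref{thm:continuousCases}). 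For part~(i) the paper does \emph{not} exploit monotonicity of $T(G_m)$: it covers $[p_0,1]$ by $O(\log^2 n)$ geometric intervals $[q_i,q_{i+1}]$ with ratio $1+1/\log n$ and shows, via the red/blue colouring and the stability lemmas (Lemma~\ref{lem2:small}, Lemma~\ref{lem:mindeg_near}), that the hypotheses of Propositions~\ref{prop:a} and~\ref{prop:b} persist throughout each interval. Your checkpoint reduction buys you something real --- you never need to argue that the Tutte--Nash-Williams conditions survive the addition of edges within an interval --- at the price of having $\Theta(n)$ checkpoints in the dense regime, so you must upgrade the paper's $1-n^{-C}$ bounds (e.g.\ Lemma~\ref{lem:expansion_easy}, Lemma~\ref{lem:small}) to summable ones. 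This is doable: the light-vertex conditions are only needed for the $O(\log n)$ checkpoints with $pn=O(\log n)$, and for $pn=\omega(\log n)$ the expansion and degree conditions hold with probability $1-e^{-\Omega((pn)^{1/3})}$ or better, which sums; but it is the one place where your route genuinely requires sharpening the lemmas (or regrouping the checkpoints geometrically, which is what the paper in effect does).

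Two smaller corrections. First, checkpoint claim~(a) as literally stated, $T(G_{\tau_k})\ge k$ for all $k$, is false for $k$ above the threshold, since there $\floor{\tau_k/(n-1)}<k$ forces $T(G_{\tau_k})<k$; and even for a $k$ that ``arises'', the hitting time $\tau_k$ itself may precede $k(n-1)$. The correct checkpoint set is the set of times at which $\min\{\delta(G_m),\floor{m/(n-1)}\}$ increases, i.e.\ the times $\max\{\tau_k,k(n-1)\}$ and $\max\{t(n-1),\tau_t\}$; at each such time both upper bounds are at least the target value, and monotonicity of $T$ then gives the reduction you want. Second, what you call the main obstacle --- zero slack at $m=t(n-1)$, where the all-singleton partition attains equality --- is already resolved deterministically by Proposition~\ref{prop:b}: its Claim~\ref{claim:simple} plus the singleton-refinement step~\eqref{eq:ratioworse} show that conditions (a')--(e') imply $e(\cP)\ge t(|\cP|-1)$ for \emph{every} partition even when $t=\d/2$ exactly, so no new idea is needed there, only the probability bookkeeping discussed above.
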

\paragraph{Remark:} We will actually prove a stronger result than Theorem~\ref{thm:process} (ii). See Theorem~\ref{thm:continuousCases} in Section~\ref{sec:process}.

For any graph $G$, let $A(G)$ denote the arboricity of $G$, i.e.~the minimum number of
subforests of $G$ which cover the whole edge set of $G$.
\begin{thm}
  \label{thm:arboricity}
  Let $\beta=2/\log(e/2)\approx 6.51778$.
  \begin{enumerate}
  \item[(i)] For all $p=\frac{\beta (\log n-\log\log n/2)+\omega(1)}{n-1}$, a.a.s.\
    $A(\sG(n,p))=\ceil[\big]{\frac{m(\sG(n,p))}{n-1}}$;
  for all $p=\omega(1/n)$, a.a.s.\ $A(\sG(n,p))\in
    \{\ceil[\big]{\frac{m(\sG(n,p))}{n-1}},\ceil[\big]{\frac{m(\sG(n,p))}{n-1}}+1\}$;
 \item[(ii)] For all $p=\Theta(1/n)$, a.a.s.\
$A(\sG(n,p))= (1+\Theta(1))pn/2$. Moreover, there exists a $k>0$ (depending on $p$), such that a.a.s.\ $A(\sG(n,p))\in \{k,k+1\}$.
  \item[(iii)] If $p=o(1/n)$, then a.a.s.\ $A(\sG(n,p))\le1$.
  \end{enumerate}
\end{thm}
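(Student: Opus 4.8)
The plan is to handle the three regimes separately, relying on the Nash-Williams characterisation of arboricity (Theorem~\ref{thm:nw-arb}), which states that $A(G) = \max_{H \subseteq G,\ |V(H)| \ge 2} \ceil{m(H)/(|V(H)|-1)}$, together with the already-proven Theorem~\ref{thm:main} and Theorem~\ref{thm:cases}.

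**Part (iii): $p = o(1/n)$.** This is the easy case. A graph has arboricity at most $1$ if and only if it is a forest. When $p = o(1/n)$, the expected number of cycles in $\sG(n,p)$ is $\sum_{k \ge 3} \binom{n}{k}\frac{(k-1)!}{2} p^k = \sum_{k\ge3}\Theta((pn)^k/k) = o(1)$, so by Markov's inequality a.a.s.\ $\sG(n,p)$ is a forest and $A \le 1$. (It equals $1$ iff there is at least one edge; equals $0$ iff empty.)

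**Part (i): $p = \omega(1/n)$.** The lower bound $A(\sG(n,p)) \ge \ceil{m/(n-1)}$ is immediate from Nash-Williams applied to $H = G$ itself (or just from the counting bound: $A(G) \ge m/(n-1)$, and $A$ is an integer). The real content is the upper bound $A(\sG(n,p)) \le \ceil{m/(n-1)} + 1$, and the sharper statement $A = \ceil{m/(n-1)}$ for $p$ above the $\beta$-threshold. For the sharp range, Theorem~\ref{thm:cases}(ii) gives that a.a.s.\ $T(\sG(n,p)) = \floor{m/(n-1)}$, i.e.\ we can pack $\floor{m/(n-1)}$ edge-disjoint spanning trees; the remaining $m - (n-1)\floor{m/(n-1)} < n-1$ edges form a forest (being fewer than $n-1$ edges on $n$ vertices — wait, that's not automatic; a set of $<n-1$ edges can contain a cycle). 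So instead: take the $\floor{m/(n-1)}$ edge-disjoint spanning trees guaranteed by Theorem~\ref{thm:cases}(ii); the leftover edges, being at most $n-2$ in number, can be greedily distributed — actually the cleanest route is that $T = \floor{m/(n-1)}$ already forces $A \le \ceil{m/(n-1)}$ whenever the leftover edge set, which has $r := m \bmod (n-1)$ edges, can be covered by one forest; but a set of $r \le n-2$ edges need not be acyclic. The fix is to add the leftover edges one at a time into the $\floor{m/(n-1)}$ existing trees: each such edge, added to a spanning tree, creates exactly one cycle, which we break by removing one tree-edge; this keeps all $\floor{m/(n-1)}$ subgraphs spanning trees but now they are not edge-disjoint... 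This is getting delicate. The sounder approach, which I believe the authors take: show that a.a.s.\ $\sG(n,p)$ with $r$ extra edges removed still has STP number $\floor{m/(n-1)}$, leaving a true forest; or invoke that when $m \ge (n-1)\floor{m/(n-1)}$ the leftover forms a graph on $n$ vertices with $<n-1$ edges that a.a.s.\ has no cycle because the leftover edges are "random-like." For the weaker range $p = \omega(1/n)$ but possibly below the $\beta$-threshold, use Theorem~\ref{thm:main}: $T = \min\{\delta, \floor{m/(n-1)}\}$. If $\delta \ge \floor{m/(n-1)}$ we are as above. If $\delta < \floor{m/(n-1)}$, pack $\delta$ spanning trees; the remaining $m - \delta(n-1)$ edges span a graph $G'$, and we must show $A(G') \le \ceil{m/(n-1)} - \delta + 1$. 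Here one applies Nash-Williams to $G'$: every subgraph $H$ of $G'$ has $m(H) \le m(G) - \delta(n-1)$ on the one hand, and more importantly one uses the crossing-edge bounds on $\sG(n,p)$ (the same ones proved for Theorem~\ref{thm:main}) to control the densest subgraph of $G'$. This reduction — converting the STP-packing result into an arboricity bound by carefully handling the residual edges — is where I expect the main technical obstacle, essentially a matter of re-running the Nash-Williams density estimates on the leftover graph.

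**Part (ii): $p = \Theta(1/n)$.** Write $p = c/n$ with $c = \Theta(1)$. By Nash-Williams, $A(\sG(n,p))$ equals the maximum over subgraphs $H$ of $\ceil{m(H)/(|V(H)|-1)}$, which is governed by the densest subgraph. When $c$ is a constant, the densest subgraph of $\sG(n,c/n)$ is (essentially) its $k$-core for the appropriate $k$; the $k$-core has a well-understood size and edge-density (by Pittel–Spencer–Wormald). One shows: (a) the $k$-core $C_k$ of $\sG(n,c/n)$, for $k$ the largest integer with nonempty $k$-core, satisfies $m(C_k)/(|C_k|-1) = (1+\Theta(1)) pn/2 = (1+\Theta(1))c/2$, giving the claimed order of magnitude $A = (1+\Theta(1))pn/2$ (using that globally $m/(n-1) \sim c/2$ and that the core only boosts density by a constant factor); and (b) two-value concentration follows because the density ratio $m(H)/(|V(H)|-1)$, maximised over $H$, is a.a.s.\ within an additive $O(1)$ window — more precisely one shows the maximiser is attained on a "core-like" subgraph whose edge count is concentrated, and then $\ceil{\cdot}$ of a concentrated quantity lands in two consecutive integers. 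The main obstacle in this part is establishing the two-point concentration: one needs that the maximum density over all (exponentially many) subgraphs does not fluctuate by more than $1$ after rounding, which requires a first-moment/union-bound argument showing no subgraph is "anomalously dense" beyond the predicted threshold plus $o(1)$, combined with a matching lower-bound construction (the core itself). I would set up, for each candidate pair $(s,t)$ of vertex- and edge-counts, the probability that some $s$-vertex subset induces $\ge t$ edges, and show that summing over $s$ and over $t$ exceeding the target density leaves a total probability $o(1)$; the delicate point is the range of small $s$ where binomial tails are heavy, handled exactly as in the minimum-degree/densest-subgraph analyses standard for $\sG(n,c/n)$.
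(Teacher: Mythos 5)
Part~(iii) of your proposal is fine. The other two parts have genuine gaps. In part~(i) you correctly identify the central obstacle --- after packing $\floor{m/(n-1)}$ spanning trees the leftover $r=m\bmod (n-1)$ edges need not be a forest, and below the $\beta$-threshold the residual graph after packing $\delta$ trees must have its density controlled --- but you do not resolve it; you explicitly defer to ``where I expect the main technical obstacle'' to lie. The paper's resolution goes in the opposite direction from removing or covering leftover edges: it \emph{adds} edges. For the sharp range, one moves forward in the random graph process to the next multiple $m'$ of $n-1$; since $\delta(G_{m'})\ge\delta(G_m)\ge\ceil{m/(n-1)}=m'/(n-1)$, Theorem~\ref{thm:process} gives a full decomposition of $G_{m'}$ into $m'/(n-1)$ spanning trees, and restricting these to $E(G_m)$ yields $\ceil{m/(n-1)}$ forests covering $G_m$. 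For the whole range $p=\omega(1/n)$, the paper augments $G_m$ with $o(n)$ carefully placed ``green'' and ``yellow'' edges attached to the low-degree vertices so that the augmented graph $G'_m$ has $\delta(G'_m)>\d(G'_m)/2$, has $m(G'_m)$ divisible by $n-1$, and satisfies the hypotheses of Proposition~\ref{prop:b}; hence $G'_m$ factors completely into $m(G'_m)/(n-1)\le\ceil{m/(n-1)}+1$ spanning trees, each a forest covering part of $G_m$. This augmentation idea (Theorem~\ref{thm:arboricity-hitting} and its Claims~1 and~2) is the missing ingredient, and it also produces the matching lower bound via a dual augmentation argument on the light vertices.

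In part~(ii) your outline points at the right objects (Nash-Williams, $k$-cores, Pittel--Spencer--Wormald), but the two claims you leave as ``to be shown'' are exactly the hard ones, and your proposed method for the upper bound --- a first-moment union bound over all pairs $(s,t)$ showing no subgraph is anomalously dense --- is not what the paper does and is not obviously workable: determining the maximum subgraph density of $\sG(n,c/n)$ to within $o(1)$ is a nontrivial problem, and the paper imports the Cain--Sanders--Wormald $k$-orientability theorem combined with Hakimi's characterisation (if the $(k+1)$-core has average degree at most $2k-\eps$ then \emph{no} subgraph has average degree above $2k$), plus a perturbation argument (Corollary~\ref{cor:subgraph} and Lemma~\ref{lem:smallS}) to handle the critical density. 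You would need to either reprove that theorem or cite it. Also, your justification of $A=(1+\Theta(1))pn/2$ does not establish the lower bound $A\ge(1+\Omega(1))pn/2$ (the point of the $\Theta(1)$ is that $A$ is \emph{not} asymptotic to $pn/2$ in this regime); the paper gets this from the $\Omega(n)$ isolated vertices, which force every forest to have at most $(1-\sigma_c)n$ edges and hence $A\ge m/((1-\sigma_c)n)$.
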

\paragraph{Remark:} (a) Note that Theorem~\ref{thm:arboricity} (i) is a simple corollary of Theorem~\ref{thm:arboricity-hitting} below. Indeed, for most values of $p=\omega(1/n)$, we can do even better than the two-value concentration result stated above and a.a.s.\ determine the exact value of the arboricity (cf. the remark that follows Theorem~\ref{thm:arboricity-hitting}).

\noindent (b) It follows from Theorem~\ref{thm:arboricity} that, for all $p=\omega(1/n)$, the arboricity of $\sG(n,p)$ is asymptotic to $pn/2$, whereas this property fails for $p=O(1/n)$.

\begin{thm}\label{thm:arboricity-hitting}
Let $\beta=2/\log(e/2)\approx 6.51778$. The following holds in the random graph process $G_0,G_1,\ldots,G_{\binom{n}{2}}$. \begin{enumerate}
  \item[(i)] Let $m_0$ be any function of $n$ such that
    $m_0/n\to\infty$ and let $\epsilon>0$ be any constant. Then,
    a.a.s.\ simultaneously for all $m\ge m_0$ such that $\delta(G_m)
    \le m/(n-1)$,
\begin{equation}\label{eq:AGPhi}
  \ceil[\bigg]{\frac{m+\phi_1}{n-1} } \le A(G_m) \le \ceil[\bigg]{\frac{m+\phi_2}{n-1}},
\end{equation}
where $\phi_1=n/\exp\left(\frac{(1+\eps)}{\beta}\frac{2m}{n}\right)=o(n)$ and $\phi_2=n/\exp\left(\frac{(1-\eps)}{\beta}\frac{2m}{n}\right)=o(n)$. In particular, a.a.s.\ $A(G_m) \in \{ \ceil{\frac{m}{n-1}}, \ceil{\frac{m}{n-1}}+1\}$ for all $m$ in that range.
\item[(ii)] Moreover, a.a.s.\ simultaneously for every $m$ such that $\delta(G_m)\ge m/(n-1)$ we have
\[
A(G_m)= \ceil[\big]{m/(n-1)}.
\]
 \end{enumerate}
\end{thm}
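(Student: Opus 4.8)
The plan is to handle the two parts separately: part~(ii) is a short deduction from Theorem~\ref{thm:process}~(i), and part~(i) contains essentially all the work. For part~(ii), the lower bound $A(G_m)\ge\lceil m/(n-1)\rceil$ is the Nash-Williams bound (Theorem~\ref{thm:nw-arb}) applied to $G_m$ itself. For the upper bound I distinguish two cases. If $(n-1)\mid m$, then $\delta(G_m)\ge m/(n-1)=\lfloor m/(n-1)\rfloor$, so Theorem~\ref{thm:process}~(i) gives $T(G_m)=\lfloor m/(n-1)\rfloor$; the corresponding edge-disjoint spanning trees use all $m$ edges, hence partition $E(G_m)$, and $A(G_m)\le\lceil m/(n-1)\rceil$. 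If $(n-1)\nmid m$, then $\delta(G_m)$, being an integer $\ge m/(n-1)$, is at least $\lceil m/(n-1)\rceil$; putting $m^+:=\lceil m/(n-1)\rceil(n-1)$ (and treating $G_m=K_n$ with $n$ odd via the classical value $A(K_n)=\lceil n/2\rceil$), we get $m<m^+\le\binom n2$ and $\delta(G_{m^+})\ge\delta(G_m)\ge m^+/(n-1)$, so Theorem~\ref{thm:process}~(i) gives $T(G_{m^+})=m^+/(n-1)$; restricting the resulting spanning-tree decomposition of $G_{m^+}$ to the subgraph $G_m$ covers $E(G_m)$ by $\lceil m/(n-1)\rceil$ forests. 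Since Theorem~\ref{thm:process}~(i) already holds simultaneously over all indices, so does this.

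For the lower bound in part~(i), I would again use Theorem~\ref{thm:nw-arb}: let $S$ be the set of vertices of $G_m$ of degree below $\tau':=(m+\phi_1)/(n-1)$ and put $H:=G_m-S$. From $e(H)\ge m-\sum_{v\in S}\deg v$ one checks that $\lceil e(H)/(v(H)-1)\rceil\ge\lceil(m+\phi_1)/(n-1)\rceil$ as soon as the total ``degree deficiency'' $\sum_{v\in S}(\tau'-\deg v)$ is at least $\phi_1$, so the task is to establish the latter, a.a.s.\ and simultaneously for all $m\ge m_0$ with $\delta(G_m)\le m/(n-1)$. Away from the sharp threshold (where $\phi_1$ is polynomial in $n$) this comes from a large-deviation estimate for the degree sequence: the number of vertices of degree at most $\approx pn/2$ is a.a.s.\ concentrated around $n\big/\exp\!\big(\tfrac{1+o(1)}{\beta}\tfrac{2m}{n}\big)$, which beats $\phi_1$ by a polynomial factor thanks to the $(1+\epsilon)$ in its exponent; near the threshold (where $\phi_1=o(1)$) it suffices that $S\ne\emptyset$ and each of its vertices contributes a positive amount, which is exactly guaranteed by $\delta(G_m)\le m/(n-1)$ (indeed $<$, since $=$ would put us in part~(ii)). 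Simultaneity is obtained by discretising $m$ into polynomially many values and controlling how degrees change between consecutive ones.

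For the upper bound in part~(i), I would carry out the edge-augmentation announced in the introduction. The goal is to produce $G^+:=G_m+F$, for a well-chosen edge set $F$, with (a) $e(G^+)=K(n-1)$ for an integer $K\le\lceil(m+\phi_2)/(n-1)\rceil$, and (b) the Tutte--Nash-Williams condition (Theorem~\ref{thm:nw}) for $K$ edge-disjoint spanning trees; then $G^+$ decomposes into $K$ edge-disjoint spanning trees and restricting this to $G_m$ covers $E(G_m)$ by $K$ forests. Here $K$ is taken to be $\lceil m/(n-1)\rceil$, except that when $(n-1)\mid m$, or $m/(n-1)$ lies within $\phi_2/(n-1)$ below an integer, one takes $K=\lceil m/(n-1)\rceil+1$; in all cases $K(n-1)-m$ edges must be added, and they comfortably suffice (as will be seen) for the degree corrections below. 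The delicate point is~(b): I would need a quantitatively robust version of the analysis behind Theorem~\ref{thm:main}, to the effect that a.a.s.\ every partition $\mathcal P$ of $[n]$ that does not isolate a low-degree vertex has a comfortable surplus of crossing edges $e_{G_m}(\mathcal P)\ge\frac{m}{n-1}(|\mathcal P|-1)+(\text{surplus})$, with enough surplus both to absorb the increase of the target from $\frac{m}{n-1}$ to $\frac{e(G^+)}{n-1}$ after adding $F$ and to survive the union bound over a discretisation of $m$; then one designs $F$ so that in $G^+$ every low-degree vertex of $G_m$ has degree at least $K$ (which takes $\ll\phi_2$ edges, by the deficiency estimate, leaving the remaining edges of $F$ as harmless ``filler''), with a little extra slack on those degrees so that every partition which \emph{does} isolate such vertices also satisfies the condition. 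I expect step~(b) --- proving the surplus estimate with exactly the right uniformity, and checking that the augmentation restores the condition everywhere --- to be the main obstacle; the rest is bookkeeping with the degree-sequence large deviations and the arithmetic of ceilings.
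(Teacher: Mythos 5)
Your part~(ii) is essentially identical to the paper's proof (round $m$ up to the next multiple $m^+$ of $n-1$, use $\delta(G_{m^+})\ge\delta(G_m)\ge m^+/(n-1)$ and Theorem~\ref{thm:process}(i) to factor $G_{m^+}$ into spanning trees, restrict to $G_m$), and it is correct. Your lower bound in part~(i) takes a genuinely different route for the combinatorial step: you delete the set $S$ of low-degree vertices and apply Theorem~\ref{thm:nw-arb} to $G_m-S$, whereas the paper keeps the graph intact, takes an optimal forest decomposition, and adds one edge at each light vertex to every forest missing it (which preserves arboricity while inflating the edge count by the number $\ell'$ of light vertices). Your deficiency arithmetic $\sum_{v\in S}(\tau'-\deg v)\ge\phi_1\Rightarrow\lceil e(H)/(v(H)-1)\rceil\ge\lceil(m+\phi_1)/(n-1)\rceil$ checks out, and both routes rest on the same probabilistic input: a Chebyshev lower bound on the number of vertices of degree at most $(1-\hat\eps)\d/2$, which exceeds $\phi_1$ because of the slack between $1+\eps/4$ and $1+\eps$ in the exponent. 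Your deletion argument is arguably cleaner (no augmentation needed, and it dispatches the near-threshold cases $R(m,n-1)\in\{0\}\cup(n-1-\phi_1,n-1)$ directly); the paper's addition argument buys nothing extra here. The discretisation of $m$ for simultaneity is the same device the paper uses (a geometric grid $p_j=(1+1/f)^jf/n$ with a summable union bound).

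The genuine gap is in the upper bound of part~(i), and you have correctly identified where it is but not closed it. Two concrete issues. First, the ``uniform surplus'' lemma you propose --- $e_{G_m}(\mathcal P)\ge\frac{m}{n-1}(|\mathcal P|-1)+\text{surplus}$ for every partition not isolating a low-degree vertex --- is false as stated: the all-singletons partition satisfies $e(\mathcal P)=m=\frac{m}{n-1}(n-1)$ with \emph{zero} surplus, so no such margin exists to absorb the increase from $m/(n-1)$ to $K$. The paper circumvents this by not proving a surplus at all: it applies Proposition~\ref{prop:b} directly to the augmented graph $G'_m$, whose proof treats exact-tightness of simple partitions (in particular all-singletons) as a boundary case and handles general partitions by a refinement argument rather than by a surplus. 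Second, the design of the augmenting edge set $F$ matters and is not pinned down in your proposal: it is not enough that every light vertex reaches degree $K$ in $G^+$; one must also ensure that the added edges do not destroy the small-set sparsity and expansion conditions (b$'$), (c$'$), (e$'$) that the partition analysis needs. The paper achieves this by routing every added (``green'') edge from a light vertex to a \emph{heavy} vertex with each heavy vertex receiving at most one such edge, and by padding with ``yellow'' edges between heavy vertices with multiplicity at most $3$ per vertex, so that any set $S$ acquires at most $4|S|$ new induced edges --- negligible against $(\hat\eps/8)t|S|$. Without these multiplicity constraints, condition (e$'$) for the augmented graph (and hence the verification of the Tutte--Nash-Williams condition for partitions with small non-singleton parts) does not follow. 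Closing the gap amounts to specifying $F$ with these constraints and then invoking Proposition~\ref{prop:b} verbatim, with failure probabilities $O(1/(p_jn)^2)$ per grid point so the union bound over $j$ converges.
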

\paragraph{Remark:} Given positive integers $a$ and $b$, let $R(a,b)=a-b\floor{a/b}$ denote the remainder of $a$ divided by $b$. Then~\eqref{eq:AGPhi} in Theorem~\ref{thm:arboricity-hitting}(i) implies $A(G_m) = \ceil{\frac{m}{n-1}}$ for those $m$ such that $0<R(m,n-1)\le n-1-\phi_2$ (which is the case for most values of $m$), and $A(G_m) = \ceil{\frac{m}{n-1}}+1$ if $R(m,n-1)=0$ or $R(m,n-1)> n-1-\phi_1$. For those few remaining values of $m$ such that $n-1-\phi_2< R(m,n-1)\le n-1-\phi_1$ we can only say $A(G_m) \in \{ \ceil{\frac{m}{n-1}}, \ceil{\frac{m}{n-1}}+1\}$.
\begin{cor}\label{cor:arboricity-hitting}
Let $m_{A=i}$ denote the minimum $m$ such that $A(G_m)$ becomes $i$ in the random graph process $G_0,G_1,\ldots,G_{\binom{n}{2}}$. Let $i_0$ be any function of $n$ such that $i_0\to\infty$ and $\epsilon>0$ be a constant. Then a.a.s.\
 \begin{enumerate}
  \item[(i)] for every $i_0\le i\le (1-\epsilon)\beta\log n/2$,
\[
(i-1)(n-1) -  \phi_2 <   m_{A=i} < (i-1)(n-1) - \phi_1,
\]
where $\phi_1=n/\exp\left(\frac{2(1+\eps)}{\beta}i\right)=o(n)$ and $\phi_2=n/\exp\left(\frac{2(1-\eps)}{\beta}i\right)=o(n)$; and
   \item[(ii)] for every $(1+\epsilon)\beta\log n/2\le i\le n/2$,
\[
m_{A=i}=(i-1)(n-1)+1.
\]
 \end{enumerate}
\end{cor}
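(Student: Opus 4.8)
The plan is to deduce Corollary~\ref{cor:arboricity-hitting} from Theorem~\ref{thm:arboricity-hitting} and Theorem~\ref{thm:process}(ii) by evaluating the a.a.s.\ bounds on $A(G_m)$ at indices $m$ close to $(i-1)(n-1)$. The one structural observation needed is that $A(G)\le A(G+e)\le A(G)+1$ for any graph $G$ and edge $e$: a single edge is a forest, so a cover of $G$ by $A(G)$ forests extends to a cover of $G+e$ by $A(G)+1$ forests. Hence, along the process, $m\mapsto A(G_m)$ is non-decreasing, changes by at most one at each step, starts with $A(G_0)\le 1$, and ends with $A(G_{\binom n2})=\ceil{n/2}$; consequently each $m_{A=i}$ with $1\le i\le\ceil{n/2}$ is well defined (in particular for all $i$ in the two ranges of the corollary, for $n$ large), and $m_{A=i}\le m$ if and only if $A(G_m)\ge i$.

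For part (ii), fix $i$ with $(1+\eps)\beta\log n/2\le i\le n/2$ and consider the two indices $m\in\set{(i-1)(n-1),\,(i-1)(n-1)+1}$. Since $i\ge(1+\eps)\beta\log n/2$, both are at least $((1+\eps)\beta\log n/2-1)(n-1)\ge(1+\eps/2)\beta n\log n/2$ for $n$ large, so Theorem~\ref{thm:process}(ii) gives a.a.s.\ $\delta(G_m)>\floor{m/(n-1)}$, hence $\delta(G_m)\ge\floor{m/(n-1)}+1\ge\ceil{m/(n-1)}\ge m/(n-1)$, and the hypothesis of Theorem~\ref{thm:arboricity-hitting}(ii) is met. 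Thus $A(G_m)=\ceil{m/(n-1)}$, i.e.\ $A(G_{(i-1)(n-1)})=i-1$ and $A(G_{(i-1)(n-1)+1})=i$. By monotonicity $A(G_m)\le i-1<i$ for every $m\le(i-1)(n-1)$, so the equivalence of the first paragraph yields $m_{A=i}=(i-1)(n-1)+1$. These deductions hold a.a.s.\ simultaneously over the whole range of $i$ because the underlying events from Theorems~\ref{thm:arboricity-hitting} and~\ref{thm:process} do.

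For part (i), I would apply Theorem~\ref{thm:arboricity-hitting}(i) and the ``below-threshold'' half of Theorem~\ref{thm:process}(ii), each with the constant $\eps/2$ in place of $\eps$, and with $m_0:=\floor{i_0n/2}$, which satisfies $m_0/n\to\infty$. On the resulting a.a.s.\ event, and for every $i$ with $i_0\le i\le(1-\eps)\beta\log n/2$, the indices $m$ used below lie in $[m_0,\,(1-\eps/2)\beta n\log n/2)$, so $\delta(G_m)\le\floor{m/(n-1)}\le m/(n-1)$ and the bounds~\eqref{eq:AGPhi} apply. For the lower bound on $m_{A=i}$, evaluate the upper bound of~\eqref{eq:AGPhi} at $m':=\floor{(i-1)(n-1)-\phi_2}$, where $\phi_2=n/\exp(2(1-\eps)i/\beta)$ is the correction from the corollary: since $m'/n=(i-1)-o(1)$ uniformly over $i\ge i_0$, the correction $n/\exp(2(1-\eps/2)(m'/n)/\beta)$ appearing in~\eqref{eq:AGPhi} equals $(1+o(1))\,n/\exp(2(1-\eps/2)(i-1)/\beta)$, and its ratio to $\phi_2$ tends to $0$ as $i\to\infty$; hence for $n$ large this correction is at most $\phi_2$, so the right-hand side of~\eqref{eq:AGPhi} at $m'$ is at most $i-1$, giving $A(G_{m'})\le i-1$ and $m_{A=i}>m'>(i-1)(n-1)-\phi_2$. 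For the upper bound, pick an integer $m''$ with $(i-1)(n-1)-\phi_1-2\le m''<(i-1)(n-1)-\phi_1$, where $\phi_1=n/\exp(2(1+\eps)i/\beta)$: the symmetric estimate shows the correction $n/\exp(2(1+\eps/2)(m''/n)/\beta)$ in~\eqref{eq:AGPhi} has ratio to $\phi_1$ tending to infinity (and $\phi_1\ge n^{\eps^2}\to\infty$ throughout this range), so it exceeds $\phi_1+2$ for $n$ large; thus the left-hand side of~\eqref{eq:AGPhi} at $m''$ is at least $i$, giving $A(G_{m''})\ge i$ and $m_{A=i}\le m''<(i-1)(n-1)-\phi_1$. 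Finally $\phi_1,\phi_2=o(n)$ because $i\ge i_0\to\infty$.

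The genuine work is the reconciliation carried out in the third paragraph: Theorem~\ref{thm:arboricity-hitting} states its corrections through $\exp(\Theta(m/n))$, whereas the corollary states them through $\exp(\Theta(i))$, and near $m=(i-1)(n-1)$ one only has $m/n=(i-1)-o(1)$, so $\exp(2m/(n\beta))$ and $\exp(2i/\beta)$ differ by a bounded multiplicative factor. This discrepancy, together with the $O(1)$ slack introduced by the floors and ceilings, is absorbed by running Theorem~\ref{thm:arboricity-hitting} with the slightly smaller constant $\eps/2$, which opens a multiplicative gap of size $\exp(\Theta(\eps i))$ between the two corrections; since $i\ge i_0\to\infty$, this gap dominates everything. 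The remaining checks --- that $m'$ and $m''$ are valid process indices lying above $m_0$ and below the minimum-degree threshold, and that $A$ attains every value $i$ in the two ranges --- are routine given $i_0\to\infty$ and $i\le(1-\eps)\beta\log n/2$.
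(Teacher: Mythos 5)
Your proposal is correct and follows the route the paper intends: the corollary is deduced from Theorem~\ref{thm:arboricity-hitting} together with Theorem~\ref{thm:process}(ii) applied with the constant $\eps/2$, and your reconciliation of the $\exp(\Theta(m/n))$ corrections in~\eqref{eq:AGPhi} with the $\exp(\Theta(i))$ corrections of the corollary is precisely the step the paper leaves implicit. (One cosmetic slip: since $m'=\floor{(i-1)(n-1)-\phi_2}\le (i-1)(n-1)-\phi_2$, the chain should read $m_{A=i}\ge m'+1>(i-1)(n-1)-\phi_2$ rather than $m_{A=i}>m'>(i-1)(n-1)-\phi_2$; the conclusion is unaffected.)
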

\paragraph{Remark:} Indeed, as shown in the proof in Section~\ref{sec:arboricity}, $m_{A=i}>(i-1)(n-1) -  \phi_2 $ holds for all $i_0\le i\le (1+\epsilon)\beta\log n/2$.

Recall that a graph $G$ is $k$-orientable if all edges of $G$ can be oriented so that the maximum indegree of the oriented graph is at most $k$. It was shown by Hakimi~\cite{Hakimi} that $G$ is $k$-orientable if and only if it contains no subgraph with average degree more than $2k$. Trivially, this implies that no graph on $n$ vertices with more than $kn$ edges can be $k$-oriented. Moreover, by the previous corollary together with a result by Nash-Williams (see Theorem~\ref{thm:nw-arb}), we obtain the following theorem, which characterises the $k$-orientability of $\sG(n,m)$ (i.e.~the uniform random graph on $n$ vertices and $m$ edges) for $k\to\infty$.
\begin{thm}\label{thm:loadbalancing}
Let $f$ be any function of $n$ that goes to infinity as $n\to\infty$. Then for every integer $k\ge f$ and any $\eps>0$, as $n\to\infty$,
\[
\pr(\sG(n,m)\ \mbox{is $k$-orientable})\to
\begin{cases}
1& \text{if $m\le k(n-1)-\phi$}\\
0& \text{if $m\ge kn+1$},
\end{cases}
\]
where $\phi=0$ if $k\ge \frac{1+\eps}{2}\beta\log n$ and $\phi=n/\exp\left(\frac{2(1-\eps)}{\beta}k\right)=o(n)$ if $f\le k< \frac{1+\eps}{2}\beta\log n$.
\end{thm}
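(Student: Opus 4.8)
\emph{Overview.} The direction $m\ge kn+1$ is the trivial one already noted before the theorem: such a graph has average degree $2m/n>2k$, so by Hakimi's theorem it is not $k$-orientable, hence $\pr(\sG(n,m)\ \text{is $k$-orientable})=0$ (and the range $kn+1\le m\le\binom{n}{2}$ is empty once $kn+1>\binom{n}{2}$). For the direction $m\le k(n-1)-\phi$, the bridge is that $A(G)\le k$ implies $G$ is $k$-orientable: by the Nash-Williams formula (Theorem~\ref{thm:nw-arb}) every subgraph $H$ of $G$ then satisfies $e(H)\le k(v(H)-1)<k\,v(H)$, so Hakimi's theorem applies. Thus it suffices to prove that \aas\ $A(\sG(n,m))\le k$ whenever $m\le k(n-1)-\phi$. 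We make two harmless reductions. Since $\phi$ as a function of $\eps$ is nondecreasing, the statement for smaller $\eps$ implies it for larger $\eps$, so we may assume $\eps<1$; and we may assume $k<(n-1)/2$, since otherwise every subgraph $H$ of any $n$-vertex graph satisfies $e(H)\le\binom{v(H)}{2}\le\frac{v(H)-1}{2}v(H)\le k\,v(H)$, so $\sG(n,m)$ is $k$-orientable deterministically.

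\emph{Core argument.} Couple $\sG(n,m)$ with the graph $G_m$ of the random graph process (they have the same law). Since $A(G_m)$ is nondecreasing in $m$ and increases by at most one when one edge is added, $m<m_{A=k+1}$ implies $A(G_m)\le k$; so it is enough to show that $m\le k(n-1)-\phi$ forces $m<m_{A=k+1}$ \aas. We apply Corollary~\ref{cor:arboricity-hitting} with parameter $\eps/2$ in place of its $\eps$ and with $i_0=\lceil f\rceil$ (so $i_0\to\infty$, $i_0\le k+1$, and $k+1\le n/2$ by the second reduction), and split into two cases. If $k+1\ge(1+\eps/2)\beta\log n/2$ --- which covers the whole regime $k\ge\frac{1+\eps}{2}\beta\log n$ (where $\phi=0$) together with the top of the regime $f\le k<\frac{1+\eps}{2}\beta\log n$ --- then part~(ii) of the corollary gives $m_{A=k+1}=k(n-1)+1>k(n-1)\ge k(n-1)-\phi\ge m$. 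If instead $k+1\le(1+\eps/2)\beta\log n/2$, then the remark following Corollary~\ref{cor:arboricity-hitting} gives $m_{A=k+1}>k(n-1)-\phi_2$ with $\phi_2=n/\exp\!\left(\frac{2(1-\eps/2)(k+1)}{\beta}\right)$; since $(1-\eps/2)(k+1)-(1-\eps)k=\frac{\eps}{2}k+1-\frac{\eps}{2}>0$ we have $\phi_2<\phi$, whence $m\le k(n-1)-\phi<k(n-1)-\phi_2<m_{A=k+1}$. In either case $m<m_{A=k+1}$, so \aas\ $A(\sG(n,m))\le k$ and therefore $\sG(n,m)$ is $k$-orientable.

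\emph{Main obstacle.} The delicate point is the $\eps$-bookkeeping around the threshold $k\sim\frac{1+\eps}{2}\beta\log n$. One would like to substitute $i=k+1$ directly into the sharp hitting-time estimate, but the clean lower bound $m_{A=i}>(i-1)(n-1)-\phi_2$ is guaranteed by the remark only for $i$ up to $(1+\eps')\beta\log n/2$, whereas the $\phi$ appearing in the statement is pinned to the original $\eps$; the choice $\eps'=\eps/2$ is exactly what makes both the range hypothesis and the comparison $\phi_2<\phi$ go through, with part~(ii) of the corollary absorbing the intermediate window of $k$. A second subtlety is that for $m$ within $\Theta(n)$ of $k(n-1)$ the arboricity can be $\ceil{m/(n-1)}+1=k+1$ rather than $k$ (precisely when the remainder $R(m,n-1)$ is close to $n-1$); phrasing everything through $m_{A=k+1}$ avoids this, since part~(ii) of the corollary already encodes that for such $k$ the value $m=k(n-1)$ lies in the spanning-tree-packing regime --- where \aas\ $\delta(G_m)>m/(n-1)$ and $A(G_m)=\ceil{m/(n-1)}$ --- rather than in the sparse-arboricity regime. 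The remaining ingredients (the identity $\sG(n,m)\overset{d}{=}G_m$, the fact that applying the corollary once with the chosen parameters suffices, and the deterministic boundary cases $k\ge(n-1)/2$ and $kn+1>\binom{n}{2}$) are routine.
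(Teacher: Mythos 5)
Your proof is correct and takes the same route the paper intends: the paper derives this theorem directly from Corollary~\ref{cor:arboricity-hitting} (and the remark following it) together with the Nash--Williams arboricity formula and Hakimi's characterisation, without writing out the details. Your reduction to the hitting time $m_{A=k+1}$, the $\eps/2$ bookkeeping needed to cover the window around $k\sim\frac{1+\eps}{2}\beta\log n$, and the boundary reductions correctly fill in the verification the paper omits.
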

\paragraph{Remark:}
In particular, Theorem~\ref{thm:loadbalancing} implies that the property of being $k$-orientable has a sharp threshold in $\sG(n,m)$ at $m\sim kn$, since we are assuming $k\to\infty$. By a closer inspection of Corollary~\ref{cor:arboricity-hitting}, we may obtain more accurate bounds on the critical $m$ for $k$-orientability by distinguishing each of the two cases $f\le k\le \frac{1-\eps}{2}\beta\log n$ and $k\ge \frac{1+\eps}{2}\beta\log n$.
Moreover, we recall that the $k$-orientability of $\sG(n,m)$ can be interpreted in terms of a load-balancing problem in a setting where $m=\omega(n)$ jobs (edges) are assigned to $n$ machines (vertices), in a way that each job is allocated to one machine, selected from two randomly given ones.
 In this context, we want to minimise the maximum load (number of jobs) received by any machine. Observe that Theorem~\ref{thm:loadbalancing} implies that there exists a load distribution such that the maximum load is around $m/n$, and almost all machines receive a load very close to the maximum load.
\medskip

The paper is organised as follows. We first introduce some basic tools in Section~\ref{sec:tools}, including two classic theorems by Tutte and Nash-Williams that characterise the spanning tree-packing number and the arboricity of a graph. There, we also prove two deterministic results (Propositions~\ref{prop:a} and~\ref{prop:b}) that are central in our argument. They give a list of conditions under which the STP number equals the minimum between $\delta$ and $\floor{m/(n-1)}$.
In Section~\ref{sec:propGnp}, we prove several lemmas about properties of $\sG(n,p)$. These lemmas will be used in conjunction with the two aforementioned deterministic propositions to derive most of the main results in the paper.
Finally, we prove Theorem~\ref{thm:main} in Section~\ref{sec:main}, Theorem~\ref{thm:cases} in Section~\ref{sec:cases}, Theorem~\ref{thm:process} in Section~\ref{sec:process}, Theorem~\ref{thm:arboricity-hitting} in Section~\ref{sec:arboricity} and Theorem~\ref{thm:arboricity} in Section~\ref{sec:sparse}.


\section{Deterministic tools}
\label{sec:tools}
In this section, we introduce some basic tools that lie in the core of our argument.
Given a graph $G$, let $V(G)$ denote the vertex set of~$G$ and let
$E(G)$ denote the edge set of~$G$. Recall that $m(G)$ is the number of edges, and $\delta(G)$ is the minimum degree  of $G$.
If $|V(G)|\ge2$, define
$\d(G) \eqdef 2m(G)/(|V(G)|-1)$.  Note that $\d(G)$ differs from the
average degree of $G$ by a small factor of $|V(G)|/(|V(G)|-1)$. Also, let $t(G)=\min\{\delta(G), \d(G)/2\}$.

We first restate two well-known results by Tutte and Nash-Williams that provide a useful characterisation of the STP number and the arboricity of a graph $G$.
For any partition $\cP$ of the vertex set $V(G)$ of a graph $G$, let
$\across(\cP)$ denote the number of edges in $G$ with ends in
distinct parts of $\cP$.
\begin{thm}[Tutte~\cite{Tutte61} and
Nash-Williams~\cite{NashWilliams61}]
\label{thm:nw}
Let $G$ be a graph and $t$ a positive integer. Then $G$ contains $t$ edge-disjoint spanning trees
if and only if, for every partition $\cP$ of the vertex set of $G$
such that every class is non-empty,
  \begin{equation}
    \label{eq:nw_condition}
    \across(\cP)
    \geq
    t (|\cP|-1).
  \end{equation}
\end{thm}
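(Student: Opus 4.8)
The plan is to prove both implications of Theorem~\ref{thm:nw}, with the reverse direction (sufficiency of the partition condition) being the substantive part, which I would handle via the matroid union theorem. Throughout write $n=|V(G)|$.

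For the easy direction, suppose $G$ contains edge-disjoint spanning trees $T_1,\dots,T_t$ and let $\cP=\{V_1,\dots,V_r\}$ be a partition of $V(G)$ into nonempty classes. Contracting each class $V_j$ to a single vertex turns each $T_i$ into a connected spanning (multi)graph on the $r$ contracted vertices, which therefore retains at least $r-1$ edges; these surviving edges are precisely the edges of $T_i$ crossing $\cP$. Since the $T_i$ are pairwise edge-disjoint, summing over $i$ gives $\across(\cP)\ge t(r-1)=t(|\cP|-1)$, which is~\eqref{eq:nw_condition}.

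For the converse, I would invoke the matroid union theorem. Let $M$ be the cycle (graphic) matroid of $G$ on ground set $E(G)$, with rank function $\rho$, so that $\rho(F)=n-c(F)$ where $c(F)$ denotes the number of components of the spanning subgraph $(V(G),F)$ (counting isolated vertices). A set of edges is independent in the $t$-fold union matroid $M^{\vee t}$ if and only if it decomposes into $t$ forests, and the matroid union theorem (as found in a standard reference such as~\cite{Schrijver03}) gives
\[
\rho_{M^{\vee t}}\big(E(G)\big) \;=\; \min_{F\subseteq E(G)}\Big(|E(G)\setminus F| + t\,\rho(F)\Big).
\]
First I would record that $G$ has $t$ edge-disjoint spanning trees if and only if $\rho_{M^{\vee t}}(E(G)) = t(n-1)$: the ``only if'' is immediate since the union of the $t$ trees is then independent of size $t(n-1)$ and $\rho_{M^{\vee t}}(E(G))\le t\,\rho(E(G))=t(n-1)$ always; for ``if'', a maximum independent set of size $t(n-1)$ splits into $t$ forests of total size $t(n-1)$, each of size at most $n-1$, forcing each to be a spanning tree, and these are pairwise edge-disjoint.

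It then remains to show that~\eqref{eq:nw_condition} implies $|E(G)\setminus F| + t\,\rho(F)\ge t(n-1)$ for every $F\subseteq E(G)$ (the reverse inequality being automatic from the displayed formula). Given $F$, let $\cP$ be the partition of $V(G)$ into the vertex sets of the components of $(V(G),F)$, so that $|\cP|=c(F)$ and $\rho(F)=n-c(F)$. Every edge of $G$ crossing $\cP$ lies outside $F$, hence $|E(G)\setminus F|\ge \across(\cP)\ge t(c(F)-1)$ by~\eqref{eq:nw_condition}; adding $t\,\rho(F)=t(n-c(F))$ yields exactly $t(n-1)$, as required. The main obstacle is really just the matroid union theorem, which I would cite rather than reprove; alternatively, one can give a self-contained proof of the converse by a direct augmenting argument on a collection of $t$ forests of maximum total size, repeatedly rerouting edges to enlarge it and, if the collection is not yet $t$ spanning trees, extracting a violating partition from the set of vertices reachable by such reroutings. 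I expect the matroid-union route to be the cleanest to write up.
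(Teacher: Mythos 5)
Your proof is correct. Note that the paper does not prove Theorem~\ref{thm:nw} at all --- it is quoted as a classical result of Tutte and Nash-Williams, with the matroid-union viewpoint only alluded to in the introduction (via the pointer to Schrijver, Ch.~51) --- so there is no in-paper argument to compare against. What you give is the standard matroid-union derivation, and all the steps check out: the contraction argument for necessity; the identification of independence in $M^{\vee t}$ with decomposability into $t$ forests; the equivalence of $t$ edge-disjoint spanning trees with $\rho_{M^{\vee t}}(E(G))=t(n-1)$ (the counting argument forcing each forest in a maximum independent set to be a spanning tree is right); and the passage from an arbitrary $F\subseteq E(G)$ to the partition $\cP$ into components of $(V(G),F)$, which turns \eqref{eq:nw_condition} into exactly the bound $|E(G)\setminus F|+t\rho(F)\ge t(n-1)$ needed in the union rank formula. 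One microscopic nit: the identity $t\rho(E(G))=t(n-1)$ holds only when $G$ is connected; what you actually need there is just the inequality $\rho_{M^{\vee t}}(E(G))\le t(n-1)$, which holds always, and in the ``only if'' direction $G$ is connected anyway, so nothing breaks. Citing the matroid union theorem rather than reproving it is entirely reasonable and matches the level at which the paper itself treats this material.
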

For any $S\subseteq V(G)$, let $E[S]$ denote the set
of edges of $G$ with both ends in~$S$.
\begin{thm}[Nash-Williams~\cite{Nash-Williams}]
\label{thm:nw-arb}
Let $G$ be a graph  and $t$ a positive integer. Then the edge set of $G$ can be covered by $t$
forests if and only if, for every nonempty subset $S$ of vertices
of~$G$,
\begin{equation}
    \label{eq:nw_condition_arb}
    \size{E[S]}
    \leq
    t (\size{S}-1).
  \end{equation}
\end{thm}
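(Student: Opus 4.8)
The plan is to derive this from the matroid union theorem applied to the cycle matroid $M(G)$ of $G$, whose ground set is $E(G)$ and whose independent sets are exactly the forests of $G$. Recall that the rank of a set $F\subseteq E(G)$ in $M(G)$ is $r(F)=\size{V(F)}-c(F)$, where $V(F)$ is the set of endpoints of the edges in $F$ and $c(F)$ is the number of connected components of the subgraph $(V(F),F)$. The necessity of~\eqref{eq:nw_condition_arb} is immediate: if forests $F_1,\dots,F_t$ cover $E(G)$ and $S$ is nonempty, then each $F_i\cap E[S]$ is a forest on vertex set $S$ and hence has at most $\size{S}-1$ edges; summing over $i$ gives $\size{E[S]}\le t(\size{S}-1)$.

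For sufficiency, I would invoke the matroid base-covering corollary of the matroid union theorem (see~\cite[Chapter~51]{Schrijver03}): the ground set of a loopless matroid $M$ is a union of $t$ independent sets if and only if $\size{F}\le t\,r(F)$ for every subset $F$ of the ground set. Since $G$ may be assumed loopless (which is the case in all our applications), it therefore suffices to show that~\eqref{eq:nw_condition_arb} implies $\size{F}\le t\,r(F)$ for all $F\subseteq E(G)$. Given such an $F$, let $S_1,\dots,S_{c(F)}$ be the vertex sets of the connected components of $(V(F),F)$; these are nonempty and pairwise disjoint, and every edge of $F$ lies inside exactly one of them. Hence, applying~\eqref{eq:nw_condition_arb} to each $S_j$,
\[
\size{F}=\sum_{j=1}^{c(F)} \size{F\cap E[S_j]} \le \sum_{j=1}^{c(F)} \size{E[S_j]} \le \sum_{j=1}^{c(F)} t(\size{S_j}-1) = t\bigl(\size{V(F)}-c(F)\bigr)=t\,r(F),
\]
which is exactly what is needed.

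The main obstacle is the matroid union theorem itself; once it is available, everything else is the short dictionary above between the rank function of $M(G)$ and the combinatorial quantities appearing in~\eqref{eq:nw_condition_arb}. If one prefers to avoid matroid machinery, an alternative is a direct exchange argument in the spirit of Nash-Williams's original proof: take $t$ forests covering all edges except a minimal leftover set, and use alternating augmenting paths to show that if a leftover edge $e$ remained, then the set $S$ of vertices reachable from $e$ along suitable alternating paths would satisfy $\size{E[S]}>t(\size{S}-1)$, contradicting~\eqref{eq:nw_condition_arb}. This works but is more delicate to carry out carefully than the matroid deduction, so I would present the matroid-based proof.
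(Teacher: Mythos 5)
The paper does not prove Theorem~\ref{thm:nw-arb} at all: it is quoted as a classical result of Nash-Williams and used as a black box (the introduction even points to the matroid-union framework in~\cite[Chapter 51]{Schrijver03} as the standard home for both this theorem and Theorem~\ref{thm:nw}). Your proposal supplies an actual proof, and it is correct. The necessity direction is the standard counting argument, and the sufficiency direction is the textbook deduction from the base-covering form of the matroid union theorem: your translation $\size{F}\le t\,r(F)$ for the cycle matroid, obtained by decomposing $F$ into the components of $(V(F),F)$ and applying~\eqref{eq:nw_condition_arb} to each component's vertex set, is exactly right, since every edge of $F$ lies in precisely one $E[S_j]$ and $\sum_j(\size{S_j}-1)=\size{V(F)}-c(F)=r(F)$. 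The only caveat is the one you already flag: the covering corollary needs the matroid to be loopless, which is automatic for the simple graphs considered in the paper (and if $G$ did have a loop at $v$, condition~\eqref{eq:nw_condition_arb} would fail for $S=\set{v}$, so the theorem remains consistent). In short, you have filled in a proof the paper deliberately omits, using the approach the paper itself implicitly endorses; nothing is missing.
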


The next two propositions play a central role in this paper. They make use of Theorem~\ref{thm:nw} to determine the STP number of any well-behaved graph satisfying certain conditions.
For $\epsilon > 0$, we say that a vertex of $G$ is {\em $\eps$-light} if its
degree is at most $\delta(G)+\epsilon \d(G)$.
\begin{prop}\label{prop:a}
  Let $G=G_n$ be a graph on vertex set $[n]$. Let $\delta\eqdef
  \delta(G)$ and let $\d\eqdef \d(G)$. Suppose that $\d\to\infty$ as
  $n\to\infty$ and that there exist constants $\eps,\zeta,\eta>0$ such
  that the following hold, for all sufficiently large $n$.
\begin{itemize}
\item[(a)] The minimum degree $\delta$ is at most $(\epsilon/4)\d$; there is no
  pair of adjacent $\epsilon$-light vertices; and all vertices of
  $G$ have at most one $\epsilon$-light neighbour.
\item[(b)] No set of size $s<\zeta n$ induces more than $(\epsilon/4)\d s$ edges.
\item[(c)] For all disjoint $S,S'\subseteq [n]$ with $\size{S}\ge\size{S'}\ge
  \zeta n$, we have that $\size{E(S, S')}\geq \eta \d n$.
\end{itemize}
Then eventually $T(G)=\delta$.
\end{prop}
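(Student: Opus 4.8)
The plan is to apply the Tutte--Nash-Williams characterisation (Theorem~\ref{thm:nw}) with $t=\delta$: since $\delta$ is a trivial upper bound for $T(G)$ (every spanning tree uses an edge at a minimum-degree vertex), it suffices to show that every partition $\cP$ of $[n]$ into nonempty classes satisfies $\across(\cP)\ge\delta(|\cP|-1)$. So fix a partition $\cP$ with parts $V_1,\dots,V_r$ (so $r=|\cP|$); we must prove $\across(\cP)\ge\delta(r-1)$. The idea is to charge the "missing" edges. Each part $V_i$ contributes to $\across(\cP)$ all edges leaving $V_i$; summing over $i$ counts each crossing edge twice, so $2\across(\cP)=\sum_i e(V_i,[n]\setminus V_i)$. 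For a single vertex $v$ forming its own part, $e(\{v\},\cdot)=\deg(v)\ge\delta$, which is already essentially what we need per part. The difficulty is with larger parts, where the internal edges are "wasted"; condition (b) (few internal edges for small sets) and condition (c) (many crossing edges between two large sets) are exactly the tools to control this, while condition (a) handles the delicate boundary case where a part is a single $\epsilon$-light vertex or a pair involving $\epsilon$-light vertices.

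\medskip

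The main steps I would carry out, in order, are as follows. First, reduce to partitions with no "trivial slack": if some part $V_i$ with $|V_i|\ge 2$ could be split into two nonempty parts without decreasing $\across$, do so; iterating, it is enough to verify the inequality for partitions where every part of size $\ge2$ is "internally connected enough" — more precisely, I would instead argue directly by a counting/amortisation over parts. Second, classify the parts of $\cP$ by size. Let the parts be ordered by size; call a part \emph{big} if $|V_i|\ge\zeta n$ and \emph{small} otherwise. There can be at most $1/\zeta$ big parts. For the big parts, condition (c) guarantees that between any two of them there are at least $\eta\d n$ crossing edges, which (since $\d\to\infty$) is far more than enough to cover the $\delta(r-1)$ requirement attributable to the $O(1)$ big parts; here I use $\delta\le(\epsilon/4)\d=O(\d)$ from (a), so $\delta\cdot(\#\text{big parts})=O(\d)=o(\d n)$. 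Third, and this is the crux, handle the small parts: I want to show each small part $V_i$ contributes at least $\delta$ (roughly, $\delta|V_i|$ worth if $|V_i|\ge2$, minus what gets absorbed) to $\across$. The key estimate is $e(V_i,\cdot)=\sum_{v\in V_i}\deg(v)-2e(V_i)\ge \delta|V_i|-2e(V_i)$ when $|V_i|\ge2$; by (b), $e(V_i)\le(\epsilon/4)\d|V_i|$, but that bound alone is too weak against $\delta|V_i|$ since $\delta$ can be as small as a constant. So instead I would use that for $|V_i|\ge2$ the sum of degrees is at least... — here is where I must be more careful: I should bound $\sum_{v\in V_i}\deg(v)$ below using that at most one vertex of $V_i$ can be $\epsilon$-light with a light neighbour, so all but $O(1)$ vertices of $V_i$ have degree $\ge\delta+\epsilon\d$, giving $\sum_{v\in V_i}\deg(v)\ge(|V_i|-2)(\delta+\epsilon\d)+2\delta$. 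Combined with $2e(V_i)\le(\epsilon/2)\d|V_i|$ from (b), this yields $e(V_i,\cdot)\ge\delta|V_i|-2\delta+(\epsilon/2)\d(|V_i|-2)-\epsilon\d$, and the $\d$-terms (positive, of order $\d|V_i|$) dominate the $-2\delta$ slack since $\d\to\infty$ and $\delta=O(\d)$.

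\medskip

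Putting it together: write $2\across(\cP)=\sum_{i\ \text{small}}e(V_i,\cdot)+\sum_{i\ \text{big}}e(V_i,\cdot)$. From the third step each small singleton contributes $\ge\delta$ and each small part of size $\ge2$ contributes $\ge 2\delta$ plus a positive surplus of order $\d|V_i|$; summing the base contributions over all small parts gives at least $\delta\cdot\#\{\text{small parts of size }1\}+2\delta\cdot\#\{\text{small parts of size}\ge2\}$, which comfortably exceeds $2\delta\cdot(\#\text{small parts})-\delta$. The big parts, by step two, contribute a further $\ge\eta\d n$ if there are at least two of them (and if there is at most one big part the bound from small parts alone already suffices, since then $r-1\le\#\text{small parts}$). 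A short case analysis on whether $0$, $1$, or $\ge2$ parts are big, discarding the tiny additive slacks using $\d\to\infty$, then gives $2\across(\cP)\ge 2\delta(r-1)$ for all sufficiently large $n$, completing the proof by Theorem~\ref{thm:nw}.

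\medskip

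The step I expect to be the main obstacle is the third one: getting a clean lower bound on $e(V_i,\cdot)$ for \emph{small} parts that beats $\delta(|V_i|-1)$ with room to spare, because $\delta$ may be only a constant and so the surplus must come entirely from the $\d$-terms controlled by (a) and (b); the bookkeeping of the $O(1)$ exceptional $\epsilon$-light vertices (at most one light vertex per part, no two adjacent light vertices) is exactly what condition (a) is engineered for, and making sure these constant-size corrections are dominated by the $\d\to\infty$ growth — uniformly over all partitions simultaneously — is the delicate part. A secondary subtlety is ensuring the reduction/casework covers partitions in which \emph{all} parts are small, where condition (c) is never invoked and the entire bound rests on the per-small-part estimate.
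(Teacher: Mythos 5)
Your overall strategy (Tutte--Nash-Williams with $t=\delta$, at most $1/\zeta$ big parts handled by condition~(c), degree-counting for the small parts) is the right one and is the same as the paper's, but the crucial small-parts step contains two genuine gaps. First, your per-part lower bound on $\sum_{v\in V_i}\deg(v)$ rests on the claim that at most $O(1)$ vertices of a part are $\eps$-light. Condition~(a) does not give this: it only forbids light vertices from being adjacent or from sharing a neighbour, so an adversarial partition can place arbitrarily many light vertices in a single part (indeed a part may consist entirely of light vertices of degree exactly $\delta$, for which your claimed bound $(|V_i|-2)(\delta+\eps\d)+2\delta$ is simply false once $|V_i|\ge3$). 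Relatedly, bounding $2e(V_i)$ by $(\eps/2)\d|V_i|$ via~(b) over the \emph{whole} part is too lossy: for a part with $\ell\gg r$ light vertices and $r\ge1$ non-light ones, the guaranteed degree sum is only $(\ell+r)\delta+r\eps\d$, and subtracting $(\eps/2)\d(\ell+r)$ makes your bound negative. The paper avoids both problems by (i) first refining any part consisting only of light vertices into singletons --- legitimate because light vertices are pairwise non-adjacent, so $\across(\cP)$ is unchanged while $|\cP|$ grows --- and (ii) in each remaining non-singleton part counting only the $r\ge1$ non-light vertices, applying~(b) to those $r$ vertices alone and using ``at most one light neighbour'' to charge the light--non-light edges, which yields $|E(S,\overline S\setminus\K_1)|\ge r(\delta+\eps\d-1)-(\eps/2)\d r\ge\delta+(\eps/4)\d$.

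Second, your final amortisation does not close: $\delta s_1+2\delta s_2\ge 2\delta(s_1+s_2)-\delta$ forces $s_1\le1$, so the inequality fails as soon as the partition has two or more singletons (e.g., the all-singleton partition). In the doubled count $2\across(\cP)=\sum_i e(V_i,\overline{V_i})$ a light singleton only guarantees $\delta$, which is half of what you need per part. The repair is exactly the paper's asymmetric bookkeeping: edges leaving light singletons go only to non-light vertices, so they can be counted once at full value on the light side, provided the counts for the other parts exclude edges to the set $\K_1$ of light singletons; every other small part then contributes at least $\tfrac{1}{2}\bigl(\delta+(\eps/4)\d\bigr)\ge\delta$ after halving, using $\delta\le(\eps/4)\d$ from~(a). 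With these two corrections your outline becomes the paper's proof; as written, the central estimate and the final summation both fail.
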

\begin{prop}\label{prop:b}
Let $G=G_n$ be a graph on $[n]$. Let $\delta\eqdef \delta(G)$ and $\d\eqdef \d(G)$, and suppose that $\d\to\infty$ as $n\to\infty$. Let $t=\min\{\delta,\d/2\}$. Suppose moreover that there exist constants
$0<\eps,\eta,\zeta \le 1$ such that the following hold, for sufficiently large $n$.
\begin{itemize}
\item[(a')] Either we have that $\delta > \frac{(1+\epsilon)\d}{2}$; or there are no
  adjacent $\eps$-light vertices and each vertex of $G$ is adjacent to at most one $\eps$-light vertex.
\item[(b')] For all $S\subseteq V(G)$, with $\size{S}\ge \zeta n$,
  we have that $d(S)\geq \d(1-o(1))$, where $d(S)$ denotes the sum of degrees
  of vertices in $S$ divided by $\size{S}$.
\item[(c')] For all disjoint $S,S'\subseteq V(G)$ with $\size{S} \ge\size{S'}\ge \zeta n$,
  we have that $\size{E(S, S')}\geq \eta \d\size{S}\size{S'}/n$.
\item[(d')] For all $\emptyset\subsetneq S\subsetneq V(G)$, we have that $\size{E(S,
    \overline{S})}\geq t$.
\item[(e')] No set of size $s<\zeta n$ induces more than $(\epsilon/4) ts$ edges.
\end{itemize}
Then eventually $T(G)=\floor t$.
\end{prop}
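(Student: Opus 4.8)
The plan is to invoke the Tutte--Nash-Williams characterisation (Theorem~\ref{thm:nw}): since $t=\min\{\delta,\d/2\}$ is an obvious upper bound on $T(G)$ (each spanning tree uses an edge at every vertex, so $T(G)\le\delta$; and $G$ has $m=\d(n-1)/2$ edges, so $T(G)\le\floor{m/(n-1)}=\floor{\d/2}$; note $\floor t=\min\{\delta,\floor{\d/2}\}$ since $\delta$ is an integer), it suffices to show that $\across(\cP)\ge \floor t\,(|\cP|-1)$ for every partition $\cP$ of $[n]$ into nonempty classes. The proof splits into cases according to the structure of $\cP$. Write $|\cP|=r$ and let the classes have sizes $n_1\ge n_2\ge\cdots\ge n_r\ge 1$. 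The trivial case $r=1$ gives $\across=0=\floor t\cdot 0$. For $r\ge 2$ the idea is: a partition with many classes forces many crossing edges just by a degree/local-structure count (using (a'), (b'), (d'), (e')), while a partition with few classes but at least two large classes forces many crossing edges by the expansion condition (c'); the delicate regime is a partition where one class is almost all of $[n]$ and the remaining classes are small.

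First I would dispose of the case where all classes are small, say $n_1<\zeta n$. Then I would like to bound $\across(\cP)$ from below by summing, over classes $P\in\cP$, the quantity (edges leaving $P$), which equals $\sum_{v\in P}\deg(v)-2|E[P]|$; summing over all classes and dividing by $2$ gives $\across(\cP)=\tfrac12\big(2m-2\sum_P|E[P]|\big)=m-\sum_P|E[P]|$. By (e'), $\sum_P|E[P]|\le(\eps/4)t\sum_P n_P=(\eps/4)tn$, so $\across(\cP)\ge m-(\eps/4)tn=\d(n-1)/2-(\eps/4)tn\ge (1-\eps/2)tn\ge \floor t\,(r-1)$ provided $r-1\le n$ (true) once $n$ is large; here I use $t\le\d/2$ and $\delta$ bounded suitably. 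Actually a cleaner route in this subcase is to note $r\ge n/\max n_i> n/(\zeta n)=1/\zeta$ is not needed; the bound $\across(\cP)\ge (1-\eps/2)tn\ge \floor t\,(n-1)\ge \floor t\,(r-1)$ already suffices. Second, I would handle the case where at least two classes are large: $n_1\ge n_2\ge\zeta n$. Then by (c') applied to $S=P_1$, $S'=P_2$ (or, to get the full count, partition the classes into two groups each of total size $\ge\zeta n$ and apply (c')), $\across(\cP)\ge\eta\d\zeta^2 n=\Omega(\d n)$, which dominates $\floor t\,(r-1)\le(\d/2)(n-1)$ only if $r-1$ is small; so in this regime I additionally need the lower bound from the first paragraph's counting ($\across(\cP)\ge m-\sum_P|E[P]|$) together with (e') applied to all the \emph{small} classes and (b') to control how many edges the large classes can absorb. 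Combining: split $\cP=\cP_{\mathrm{big}}\cup\cP_{\mathrm{small}}$ where $\cP_{\mathrm{small}}$ are the classes of size $<\zeta n$ (there are at most $r$ of them, total size $\le n$); edges not counted in $\across$ lie inside some class; inside small classes there are $\le(\eps/4)tn$ of them by (e'), and inside the $O(1/\zeta)$ big classes there are $\le m$ trivially but one uses (b')/(d') to argue the big classes still leak $\ge t$ edges each — the upshot is $\across(\cP)\ge (1-\eps)tn\ge\floor t\,(r-1)$.

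The main obstacle, and the case requiring condition (a'), is the remaining regime: $n_1\ge \zeta n$ but $n_2<\zeta n$ — one huge class $P_1$ and the rest small. Here $\across(\cP)$ equals the number of edges with at least one endpoint outside $P_1$, counted with multiplicity for edges among the small classes. A first bound: $\across(\cP)\ge\sum_{v\notin P_1}\deg(v)-2\sum_{P\ne P_1}|E[P]|\ge \sum_{v\notin P_1}\deg(v)-(\eps/2)t\,|[n]\setminus P_1|$ by (e'). If every vertex outside $P_1$ had degree $\ge t$ this would give $\across(\cP)\ge(1-\eps/2)t\,|[n]\setminus P_1|\ge \floor t\,(r-1)$ since $|[n]\setminus P_1|\ge r-1$ (each of the $r-1$ small classes is nonempty). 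The difficulty is vertices of degree close to $\delta$ (the $\eps$-light vertices) when $t=\delta$ — an $\eps$-light vertex $v\notin P_1$ of degree exactly $\delta$ contributes only $\delta$ to $\sum_{v\notin P_1}\deg(v)$ but we must then \emph{not} lose the $(\eps/2)t$ correction at $v$, i.e. we need $v$ to induce no edges inside its (small) class. This is exactly what (a') buys: either $\delta>(1+\eps)\d/2$ (so $t=\d/2<\delta$ and light vertices are irrelevant to the bound $\deg(v)\ge\delta>t$, wait — then $t=\floor{\d/2}$ and every vertex has degree $\ge\delta>(1+\eps)t$, so even after the $(\eps/4)t$-type loss we keep $\ge t$ per vertex outside $P_1$), or there are no two adjacent $\eps$-light vertices and each vertex has $\le1$ light neighbour — so I would argue that the $\eps$-light vertices outside $P_1$ form, together with their unique light neighbours, a structure in which crossing edges can be charged injectively: a light vertex $v$ of degree $\delta$ sitting alone in a small class $P$ sends all $\delta$ of its edges across $\cP$ unless it has a neighbour in $P$, but its only possible ``problematic'' (light) neighbour is unique, and two light vertices are non-adjacent, so a careful accounting (pairing each deficient light vertex with a heavy neighbour that carries surplus degree $\ge \d$) recovers the bound $\across(\cP)\ge\floor t\,(r-1)$. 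Writing this pairing argument rigorously — tracking the at most $O(1/\zeta)$ big classes versus the small ones, and the light/heavy dichotomy — is the technical heart of the proof; conditions (b'), (c'), (d') are the comparatively routine ingredients used to glue the big class(es) to the rest and to guarantee the baseline $\ge t$ edges leave every proper subset.
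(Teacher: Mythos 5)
Your overall strategy (Tutte--Nash-Williams plus a case analysis on the class sizes of $\cP$) is the right one, but as written the argument has two genuine gaps. First, in the ``all classes small'' case your chain of inequalities is false: you claim $\across(\cP)\ge m-(\eps/4)tn\ge(1-\eps/2)tn\ge\floor{t}(n-1)$, but $(1-\eps/2)tn\ge\floor{t}(n-1)$ fails for large $n$ (when $t$ is an integer it reads $(1-\eps/2)n\ge n-1$, i.e.\ $(\eps/2)n\le 1$). The loss of $(\eps/4)tn$ from induced edges is far larger than the slack $t(n-1)-\floor{t}(n-1)<n$; the count only closes if you charge the induced edges of each class of size $n_P\ge 2$ against the $\floor{t}(n_P-1)$ units of budget that class frees up (using that singletons induce nothing, so $\sum_P\size{E[P]}\le(\eps/2)t(n-r)$), which you do not do. Second, and more seriously, the case you correctly identify as the heart of the matter --- one large class plus small classes, with $t=\delta$ and $\eps$-light vertices sitting in small classes --- is not proved: you describe a ``pairing'' of deficient light vertices with heavy neighbours and explicitly defer writing it rigorously. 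This is precisely the step where the difficulty lives (a light vertex of degree exactly $\delta$ inside a class of size $\ge 2$ may lose edges to its own class, and condition (e') only controls induced edges per class in aggregate, not per vertex), so the proposal as it stands does not constitute a proof.

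It is worth noting that the paper avoids your hardest case entirely by a reduction you are missing: it first proves the bound (with $t$, not $\floor t$) only for \emph{simple} partitions, in which every class is either large or a singleton, and then shows that an arbitrary counterexample can be refined to a simple counterexample. The refinement repeatedly extracts from any small class of size $\ge 2$ a vertex $w$ with $d_S(w)\le t/2$ (which exists by (e')), and uses (d') --- which gives $\across(\cP)\ge(t/2)(\size{\cP}-1)$ by summing cuts over classes --- to check that turning $w$ into a singleton does not increase the ratio $\across(\cP)/(\size{\cP}-1)$. After this reduction the light-vertex accounting only ever has to deal with light \emph{singletons}, for which condition (a') applies cleanly (no two adjacent, no common neighbour), and the delicate regime you struggle with disappears. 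I would recommend restructuring your argument around such a reduction rather than attempting the direct pairing.
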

Propositions~\ref{prop:a} and~\ref{prop:b} will be used to determine the STP number and the arboricity of $\sG(n,p)$ (see the arguments leading to the proofs of Theorems~\ref{thm:main}, \ref{thm:process}, \ref{thm:arboricity} and~\ref{thm:arboricity-hitting}).
Basically, according to the range of $p$, we will show that $\sG(n,p)$ (or some modification of $\sG(n,p)$) satisfies the conditions in Proposition~\ref{prop:a} or Proposition~\ref{prop:b} with sufficiently high probability.
Proposition~\ref{prop:a} is applied when the minimum degree is relatively small compared to $\d$, whereas otherwise Proposition~\ref{prop:b} is used instead.
Thus, we need a good estimation of $\delta(\sG(n,p))$; together with several graph-expansion-related properties, as required by conditions (b), (c), (c'), (d') and (e');
and also some properties about the $\eps$-light vertices addressed in conditions (a) and (a').
In the following section, we derive bounds on the probability that these properties hold in $\sG(n,p)$ for some relevant ranges of $p$.

\begin{proof}[Proof of Proposition~\ref{prop:a}]
  We will show that every partition of the vertices of $G$
  satisfies~\eqref{eq:nw_condition} with $t = \delta$, and thus $G$
  has $\delta$ edge-disjoint spanning trees by Theorem~\ref{thm:nw}.

  Let $\cP$ be a partition of $V(G)$. Parts of size one are denoted
  \emph{singletons}, and singletons consisting of one $\epsilon$-light
  vertex are called \emph{$\epsilon$-light singletons}. We may assume that
\begin{equation}
\mbox{ every
  part with size at least $2$ has one vertex that is not
  $\epsilon$-light.} \label{hypothesis}
  \end{equation}

  This is because, given a part of size at least $2$
  and with only $\epsilon$-light vertices, we can refine the partition
  by turning each vertex in this part into a singleton, and this
  increases the number of parts without increasing the number of edges
  with ends in distinct parts by Condition~(a).

  Let $\K_1$ denote the set of $\epsilon$-light singletons, let $\K_2$
  denote the set of parts of size between $2$ and $\zeta n$ together
  with the singletons that are not $\epsilon$-light, and let $\K_3$
  denote the set of other parts. For $i=1,2,3$, let
  $k_i=|\K_i|$. Then, $|\cP|=k_1+k_2+k_3$.

  By Condition~(a), no $\epsilon$-light vertices are adjacent. Thus,
  the number of edges incident with a vertex in $\K_1$ is at least
  $\delta k_1$. Suppose $\K_2$ is non-empty and suppose $S$ is a part in $\K_2$, and let $r$ be the
  number of vertices in $S$ that are not $\epsilon$-light. By the assumption in~\eqref{hypothesis} and
  the definition of $\K_2$, we must have $1\le r\le\zeta n$. The number of edges
  between these $r$ vertices is at most $(\epsilon/4) \d r$ by
  Condition~(b). Since these vertices are not $\epsilon$-light, each
  of them has degree at least $\delta+\eps \d$. By Condition (a), each
  of these vertices is adjacent to at most one $\epsilon$-light
  vertex. Thus,
\begin{equation*}
  \size{E(S,\overline S\setminus \K_1)}
  \geq
  r(\delta+\eps \d-1)-2(\eps/4)\d r  \geq \delta + (\eps/4)\d,
\end{equation*}
where the term $-1$ in the first inequality accounts for a possible
$\epsilon$-light neighbour of each one of these $r$ vertices and we
use the fact that $d\to\infty$. Thus, the number of edges in the
partition $\cP$ is at least
\begin{equation}\label{eq:edges1}
\across(\cP)\ge \delta k_1 +\frac{\delta +(\epsilon/4)\d}{2} k_2 \ge \delta(k_1 + k_2),
\end{equation}
as $\delta\le(\eps/4)\d$ by Condition~(a). If $k_3\leq 1$, this
already shows that $\across(\cP)\geq \delta
(\size{\cP}-1)$. Otherwise, we have $2\le k_3\le 1/\zeta$, and the
number of edges between any two parts of $\K_3$ is at least $\eta \d n$
by Condition~(c). We can add these additional edges
to~\eqref{eq:edges1} and obtain
\begin{equation}\label{eq:edges2}
  \across(\cP)  \geq  \delta (k_1 + k_2)  + \eta \d n \geq \delta (k_1+k_2) + (\epsilon/4)\d k_3 \geq \delta \size\cP,
\end{equation}
since eventually $\eta n \ge (\epsilon/4)/\zeta \ge (\epsilon/4 ) k_3$, and $\delta\le(\eps/4)\d$.
\end{proof}
\begin{proof}[Proof of Proposition~\ref{prop:b}]
We will show that every partition of the vertices of $G$ satisfies~\eqref{eq:nw_condition}, and thus $G$ has $\floor t$ edge-disjoint spanning trees by Theorem~\ref{thm:nw}.

We say that a set $S\subseteq V$ is \emph{large} if $\size{S}\geq
\zeta n$. We say that a partition of $V$ is \emph{simple} if each
class either is large or a singleton (that is, it consists of a single
vertex). Recall that $\across(\cP)$ denotes the number of edges with
ends in distinct parts of $\cP$.
\begin{claim}
\label{claim:simple}
  If $\cP$ is a simple partition, then $\across(\cP)\geq t(\size{\cP}-1)$.
\end{claim}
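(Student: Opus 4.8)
The plan is to establish the bound $\across(\cP)\ge t(\size{\cP}-1)$ by decomposing the edge count of $\cP$ according to its shape. Write $L_1,\dots,L_\ell$ for the \emph{large} classes of $\cP$ (so each $\size{L_i}\ge\zeta n$, hence $\ell\le 1/\zeta$ is bounded), put $L\eqdef L_1\cup\dots\cup L_\ell$, and let $S$ be the union of the singleton classes, with $s\eqdef\size{S}$; thus $\size{\cP}=\ell+s$. Starting from $\across(\cP)=m(G)-\sum_i\size{E[L_i]}$, expanding $m(G)$ along the vertex partition $V(G)=S\cup L$, and using the handshake identity $\size{E(S,L)}=\sum_{v\in S}\deg(v)-2\size{E[S]}$, the first move is to record the exact formula
\[
\across(\cP)\;=\;\sum_{1\le i<j\le\ell}\size{E(L_i,L_j)}\;+\;\sum_{v\in S}\deg(v)\;-\;\size{E[S]}.
\]
If $\ell=0$ then $\cP$ is the all-singletons partition, $\across(\cP)=m(G)=\d(n-1)/2\ge t(n-1)$ since $t\le\d/2$, and we are done; so assume henceforth that $\ell\ge1$.

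For the first sum, if $\ell\ge2$ then each of the $\binom{\ell}{2}\ge\ell-1$ pairs satisfies $\size{E(L_i,L_j)}\ge\eta\d\zeta^2 n\ge t$ for large $n$ by condition~(c') (using $\d\to\infty$ and $t\le\d/2$), so $\sum_{i<j}\size{E(L_i,L_j)}\ge(\ell-1)t$; this also holds trivially when $\ell=1$. By the formula it therefore remains to prove
\[
\sum_{v\in S}\deg(v)-\size{E[S]}\;\ge\;ts ,
\]
which is trivial if $s=0$, so assume $s\ge1$. I would split on $\size{S}$. If $S$ is large ($s\ge\zeta n$): rewrite the left-hand side as $\tfrac12\sum_{v\in S}\deg(v)+\tfrac12\size{E(S,L)}$ (again by the handshake identity); here $\sum_{v\in S}\deg(v)\ge(1-o(1))\d s$ by~(b'), and $\size{E(S,L)}\ge\eta\zeta\d s$ by~(c') (both $S$ and $L$ have size $\ge\zeta n$, the latter because $\ell\ge1$), so the left-hand side is at least $\tfrac{\d s}{2}(1-o(1)+\eta\zeta)\ge\tfrac{\d s}{2}\ge ts$ for large $n$.

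If $S$ is small ($s<\zeta n$): then $\size{E[S]}\le(\eps/4)ts$ by~(e'), and I would use the two alternatives of~(a'). If $\delta>(1+\eps)\d/2$, then $t=\d/2$, every singleton has degree $>(1+\eps)t$, and hence $\sum_{v\in S}\deg(v)-\size{E[S]}>(1+\eps)ts-(\eps/4)ts>ts$. Otherwise no two $\eps$-light vertices are adjacent and every vertex has at most one $\eps$-light neighbour; split $S$ into its $\eps$-light part $S_1$ (hence independent) and its non-$\eps$-light part $S_2$. Then $\size{E(S_1,S_2)}\le\size{S_2}$ and $\size{E[S]}=\size{E(S_1,S_2)}+\size{E[S_2]}\le\size{S_2}+(\eps/4)t\size{S_2}$ by~(e'); since the vertices of $S_1$ have degree $\ge\delta$, those of $S_2$ degree $>\delta+\eps\d$, and $(\eps/4)t\le\eps\d/8$, one gets $\sum_{v\in S}\deg(v)-\size{E[S]}\ge\delta\size{S_1}+(\delta+\tfrac78\eps\d-1)\size{S_2}\ge\delta s\ge ts$ for large $n$. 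Combining with the bound on the first sum gives $\across(\cP)\ge(\ell-1)t+ts=t(\size{\cP}-1)$ in all cases.

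The step I expect to be the real obstacle is exactly the bound $\sum_{v\in S}\deg(v)-\size{E[S]}\ge ts$ in these ``intermediate'' situations. Condition~(e') pins down $\size{E[S]}$ for small $S$ and condition~(b') forces $\sum_{v\in S}\deg(v)\ge(1-o(1))\d s$ for large $S$, but in each case a naive degree count only matches $ts$ up to a constant factor or an $o(1)$ term, and the deficit has to be made up by the cross term $\size{E(S,L)}\ge\eta\zeta\d s$ from~(c') and --- when the minimum degree is small --- by the $\eps$-light bookkeeping in~(a'). Indeed, without~(a') the claim is simply false: two adjacent minimum-degree vertices, each forming a singleton class, already violate $\across(\cP)\ge t(\size{\cP}-1)$. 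Getting the two alternatives of~(a') to interact cleanly with~(b'), (c') and~(e') is therefore the heart of the argument.
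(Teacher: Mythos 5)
Your proof is correct and follows essentially the same route as the paper's: the same case split on whether the singleton set $S$ is empty, large, or small, the same use of (b') and (c') for large $S$, the same two subcases of (a') with the $\eps$-light bookkeeping for small $S$, and the same use of (c') to extract $(\ell-1)t$ from pairs of large parts. The only difference is cosmetic — you organize the count via an explicit identity for $\across(\cP)$ rather than the paper's phrasing in terms of "edges incident with a singleton."
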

Assume Claim~\ref{claim:simple} holds, and suppose for a contradiction
that there is a partition $\cP$ of $V$ such that $\across(\cP)
<t(\size{\cP}-1)$.  By Claim~\ref{claim:simple}, $\cP$ is not a simple
partition. Given a set $S$ and a vertex $v\in S$, let $d_S(v)$ denote
the number of neighbours of $v$ inside $S$. Since $\cP$ is not simple,
we can find a non-large part $S$ of $\cP$ with at least $2$
vertices. By Condition~(e') and since $\eps\le1$, $S$ must contain one vertex $w$ with
\begin{equation}\label{eq:dsw}
d_S(w) \le \frac{2|E[S]|}{|S|} \le \frac{2\epsilon t|S|}{4|S|} \le t/2.
\end{equation}
Moreover, condition (d') implies that
\begin{equation}\label{eq:halfratio}
\across(\cP)\geq (t/2)(\size{\cP}-1).
\end{equation}
Let $\cP'$ be obtained from $\cP$ by turning $w$ into a singleton. We
have $|\cP'|=|\cP|+1$ and
$\across(\cP')=\across(\cP)+d_S(w)$. Combining this facts together
with~\eqref{eq:dsw} and~\eqref{eq:halfratio}, we obtain
\begin{equation}
  \label{eq:ratioworse}
  \frac{\across(\cP')}{\size{\cP'}-1}
  =
  \frac{\across(\cP)+d_S(w)}{\size{\cP}}
  \leq
  \frac{\across(\cP)+t/2}{\size{\cP}}
  \leq
  \frac{\across(\cP)+\frac{\across(\cP)}{\size{\cP}-1}}{\size{\cP}}
  =
  \frac{\across(\cP)}{\size{\cP}-1}.
\end{equation}
Repeat this procedure of turning vertices into singletons until no
parts of size between $2$ and $\zeta n$ remain, and therefore obtain a
simple partition $\cP''$. Since~\eqref{eq:ratioworse} holds in each
iteration, we have
\begin{equation*}
  \frac{\across(\cP'')}{\size{\cP''}-1}
  \leq
  \frac{\across(\cP)}{\size{\cP}-1}
  < t,
\end{equation*}
which contradicts Claim~\ref{claim:simple}.

To complete the argument, we proceed to prove
Claim~\ref{claim:simple}. Let $\cP$ be a simple partition. If all
parts of $\cP$ are singletons, then we have $\across(\cP) =
\frac{\d}{2}(n-1) = \frac{\d}{2}(\size{\cP}-1)$.  Suppose otherwise
there is at least one large part. Since $\cP$ is simple, each large
part has at least $\zeta n$ vertices and so there are at most
$\ell\eqdef 1/\zeta = O(1)$ large parts. Let $k$ be the number of
singletons in~$\cP$. Note that $k\leq (1-\zeta)n$ since any large part
has at least $\zeta n$ vertices.

  Suppose first that $\zeta n\le k\le(1-\zeta)n$. Then the
  average degree of the singletons is at least $\d(1-o(1))$ by
  Condition~(b'). Since there is at least one large part, the number of
  edges between the $k$ singletons and this large part is at least
  $\eta\zeta \d k$ by Condition~(c'). Hence, $\across(\cP)$ is at least the number of edges incident with a singleton, which is at least
  \[
  \frac{\d(1-o(1))k+\eta\zeta \d k}{2} \ge
  \frac{(1+\eta\zeta/2)k}{k+\ell-1}(\d/2)(k+\ell-1).
  \]
  This satisfies Equation~\eqref{eq:nw_condition} with $\d/2\ge t$ for
  large enough $n$, since $k\ge \zeta n$ and $\ell=O(1)$.

  Suppose otherwise that $0\le k\le \zeta n$. By Condition~(a'), we have
  that either $\delta > \frac{(1+\epsilon)\d}{2}$; or there are no adjacent
  $\eps$-light vertices and each vertex is adjacent to at most one $\eps$-light vertex. The number of edges between singletons is at most $(\eps/4)tk\le\epsilon tk$
  by Condition~(e'). In the first case where $\delta > \frac{(1+\epsilon)\d}{2}$, the total number of edges
  incident to the singletons is at least
\begin{equation}\label{eq:dk2}
    \frac{(1+\epsilon)\d}{2}k - \epsilon tk \ge (1+\epsilon)tk - \epsilon tk \ge tk.
  \end{equation}

 Now we consider the second case.
  Recall that a vertex is $\eps$-light if it has degree at most
  $\delta+\epsilon \d$. Suppose that there are no adjacent
  $\eps$-light vertices and each vertex is adjacent to at most one $\eps$-light vertex.  Let $K_1$ denote the set of singletons that are $\eps$-light and $K_2$
  the set of other singletons (singletons that are not $\eps$-light). Let $k_i=\size{K_i}$ for $i=1,2$, so $k=k_1+k_2$ (possibly
  $k_1,k_2=0$). Since there are no adjacent $\eps$-light vertices, $|E(K_1,\overline{K_1})|\ge \delta k_1$. Since no two $\eps$-light vertices have a common neighbour, we have
$d_{[n]\setminus K_1}(v)\ge \delta+\eps \d-1$, for every $v\in K_2$. Moreover, Condition~(e') guarantees that
  there are at most $\eps t k_2 \le \eps \d k_2/2$ edges inside $K_2$, and therefore
  $|E(K_2,\overline{K_2}\setminus K_1)|\ge (\delta+\epsilon \d-1)k_2 - \epsilon \d k_2/2$.
 Thus, the total number
  of edges incident with singletons is at least
  \begin{equation}\label{eq:deltak}
    \delta k_1 + (\delta+\epsilon \d-1)k_2 - \epsilon \d k_2/2
    \ge \delta k \ge tk,
  \end{equation}
  eventually as $\d=\omega(1)$ by our assumption.
  Thus, we have proved that in both cases, the number of edges incident with singletons is at least $tk$.
If the number of large parts is exactly $1$, then~(\ref{eq:nw_condition}) holds as $|\cP|=k+1$ and $\across(\cP)\ge tk$ by~\eqref{eq:dk2} and~\eqref{eq:deltak}. Otherwise, if there are at least two large parts, the number of edges  between any
  two of them is at least $\eta\zeta^2\d n$  by Condition~(c'). Thus, for large enough $n$,
  \begin{equation*}
    \across(\cP)
    \ge
    tk +
    \eta  \zeta^2 \d n
    \ge
    t\left(k +
    \frac{\eta\zeta^2\d n}{t}\right)
    \ge
    t (k+\ell-1),
  \end{equation*}
  since $t\le \d/2$ and $\ell=O(1)$.
\end{proof}
%
%
%
\section{Properties of $\sG(n,p)$}\label{sec:propGnp}

In this section, we always let $G$ denote $\sG(n,p)$, and
let $\delta \eqdef \delta(\sG(n,p))$, $m\eqdef m(\sG(n,p))$ and $\d\eqdef \d(\sG(n,p))=2m/(n-1)$. Recall our earlier assumption that $n\ge2$. So $\d$ is well defined.  For
any vertex $v$, let $d_v$ denote the degree of $v$ in $G$.

\subsection{Typical degrees}

Our aim here is to show that $m$ and $\d$ are a.a.s.\ concentrated around their expected values, and that most of the vertices of $\sG(n,p)$ have degree close to $\d$.
To do so, we first state a version of the well-know Chernoff's bounds (see~e.g.~Theorems~4.4 and~4.5 in~\cite{MitzenmacherUpfal})
\begin{thm}[Chernoff's bounds] \label{thm:chernoff} Let $X_1,\ldots,X_n$ denote $n$ independent Bernoulli variables. Let $X=\sum_{i=1}^n  X_i$ and let $\mu=\ex X$. Then for any $0<\tau<1$,
$$
\pr\left(X\ge (1+\tau)\mu\right)\le \exp(-\tau^2\mu/3),\quad \pr\left(X\le (1-\tau)\mu\right)\le \exp(-\tau^2\mu/2).
$$
\end{thm}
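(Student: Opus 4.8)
The plan is to run the standard exponential-moment (Chernoff--Bernstein) argument, optimising the free parameter separately for each tail. For the upper tail, fix $s>0$ and apply Markov's inequality to the nonnegative random variable $e^{sX}$:
\[
\pr\paren{X\ge(1+\tau)\mu}=\pr\paren{e^{sX}\ge e^{s(1+\tau)\mu}}\le e^{-s(1+\tau)\mu}\,\ex\!\left[e^{sX}\right].
\]
By independence $\ex[e^{sX}]=\prod_{i=1}^{n}\ex[e^{sX_i}]$, and writing $p_i=\ex X_i$ we have $\ex[e^{sX_i}]=1+p_i(e^s-1)\le\exp\paren{p_i(e^s-1)}$ by $1+x\le e^x$; multiplying over $i$ gives $\ex[e^{sX}]\le\exp\paren{\mu(e^s-1)}$. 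Hence $\pr(X\ge(1+\tau)\mu)\le\exp\paren{\mu(e^s-1)-s(1+\tau)\mu}$, and minimising the exponent over $s>0$ (the optimum being $s=\log(1+\tau)$) gives the bound $\left(e^{\tau}(1+\tau)^{-(1+\tau)}\right)^{\mu}$. It then suffices to check the scalar inequality $\tau-(1+\tau)\log(1+\tau)\le-\tau^{2}/3$ for $0<\tau<1$, which yields $\pr(X\ge(1+\tau)\mu)\le e^{-\tau^{2}\mu/3}$.

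For the lower tail the argument is symmetric, now fixing $u>0$ and applying Markov to $e^{-uX}$. The same moment computation gives $\pr(X\le(1-\tau)\mu)\le\exp\paren{\mu(e^{-u}-1)+u(1-\tau)\mu}$, and minimising over $u>0$ (the optimum is $u=-\log(1-\tau)>0$, which is where the hypothesis $\tau<1$ is used) produces the bound $\left(e^{-\tau}(1-\tau)^{-(1-\tau)}\right)^{\mu}$. One concludes with the scalar inequality $-\tau-(1-\tau)\log(1-\tau)\le-\tau^{2}/2$ for $0<\tau<1$, giving $\pr(X\le(1-\tau)\mu)\le e^{-\tau^{2}\mu/2}$. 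The slightly better constant $1/2$ in the lower tail (versus $1/3$ in the upper one) is exactly what this comparison delivers, reflecting that $(1-\tau)^{-(1-\tau)}$ shrinks a touch faster than $(1+\tau)^{-(1+\tau)}$ grows.

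The only step that is not purely mechanical is the pair of one-variable inequalities. For the first, put $\psi(\tau)=\tau-(1+\tau)\log(1+\tau)+\tau^{2}/3$; then $\psi(0)=0$ and $\psi'(\tau)=\tfrac{2}{3}\tau-\log(1+\tau)$, which is $\le0$ on $(0,1)$ (for instance because $\psi''(\tau)=\tfrac23-\tfrac1{1+\tau}$ vanishes only at $\tau=\tfrac12$, while $\psi'(0)=0$ and $\psi'(1)=\tfrac23-\log 2<0$), so $\psi\le0$ there. The second is handled the same way with $\chi(\tau)=-\tau-(1-\tau)\log(1-\tau)+\tau^{2}/2$: here $\chi(0)=0$ and $\chi'(\tau)=\tau+\log(1-\tau)\le0$ since $\log(1-\tau)\le-\tau$. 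Both inequalities are entirely standard; as the statement already refers to \cite{MitzenmacherUpfal}, one may alternatively just quote them from there. I expect this elementary verification to be the only real obstacle, and even so it is routine.
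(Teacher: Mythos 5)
Your proof is correct. The paper does not prove this statement at all --- it is quoted verbatim from the cited reference (Theorems~4.4 and~4.5 of~\cite{MitzenmacherUpfal}) --- and your argument is exactly the standard exponential-moment derivation given there: Markov applied to $e^{\pm sX}$, the bound $\ex[e^{sX_i}]\le\exp(p_i(e^s-1))$, optimisation at $s=\log(1+\tau)$ (resp.\ $u=-\log(1-\tau)$), and the two elementary scalar inequalities, both of which you verify correctly (in particular the upper-tail one, where $\psi'$ dips below zero on $(0,\tfrac12]$ and, being increasing thereafter with $\psi'(1)=\tfrac23-\log 2<0$, stays nonpositive on all of $(0,1)$).
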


\begin{lem}
  \label{lem:avgdeg}
  For any function $\tau(n) < 1$, we have that the probability that $|\d
  - pn| \leq \tau pn$ and $|m - p\binom{n}{2}| \leq \tau p\binom{n}{2}$ is at
  least $1-2\exp(-A \tau^2pn^2)$ where $A=1/12$.
\end{lem}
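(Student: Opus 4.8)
The plan is to reduce the lemma to a single application of Chernoff's bounds to the random variable $m$, after first observing that the two events appearing in the statement actually coincide.

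For the first step, recall that $\d = 2m/(n-1)$, so that
\[
\d - pn = \frac{2}{n-1}\left( m - \frac{pn(n-1)}{2}\right) = \frac{2}{n-1}\left( m - p\binom{n}{2}\right),
\]
and hence $|\d - pn| = \frac{2}{n-1}\,\bigl|m - p\binom n2\bigr|$. Since likewise $\tau pn = \frac{2}{n-1}\,\tau p\binom n2$, the inequality $|\d - pn| \le \tau pn$ holds if and only if $\bigl|m - p\binom n2\bigr| \le \tau p\binom n2$. Thus the two events in the statement are identical, and it suffices to bound the probability that $m$ deviates from its mean $\mu \eqdef \ex m = p\binom n2$ by more than a factor $\tau$.

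For the second step, write $m$ as the sum of the $\binom n2$ independent Bernoulli($p$) indicator variables, one per potential edge of $\sG(n,p)$, so that $\ex m = \mu$. We may assume $0 < \tau < 1$ (otherwise the claimed bound is vacuous or the statement is trivial). Applying Theorem~\ref{thm:chernoff} to $X = m$ gives
\[
\pr\bigl(m \ge (1+\tau)\mu\bigr) \le \exp(-\tau^2\mu/3), \qquad \pr\bigl(m \le (1-\tau)\mu\bigr) \le \exp(-\tau^2\mu/2) \le \exp(-\tau^2\mu/3),
\]
and a union bound yields $\pr\bigl(|m - \mu| > \tau\mu\bigr) \le 2\exp(-\tau^2\mu/3)$. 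Finally, since $n \ge 2$ we have $\binom n2 = \tfrac{n(n-1)}{2} \ge \tfrac{n^2}{4}$, so $\mu \ge pn^2/4$ and therefore $\tau^2\mu/3 \ge \tau^2 pn^2/12 = A\tau^2 pn^2$ with $A = 1/12$; combining this with the previous display and the equivalence of events from the first step completes the proof.

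The argument is entirely routine and I anticipate no real obstacle. The only two points requiring a moment's care are: recognizing that the condition on $\d$ and the condition on $m$ describe the same event, so that a single two-sided tail bound suffices; and keeping enough slack in the constant (taking $A = 1/12$ rather than the $1/3$, $1/2$ coming straight out of Chernoff) to absorb the loss in passing from $\binom n2$ to $n^2/4$, which is what forces the assumption $n \ge 2$ already standing in the paper.
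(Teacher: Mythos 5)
Your proof is correct and follows essentially the same route as the paper's: observe that the two deviation events coincide because $\d=2m/(n-1)$, apply the Chernoff bounds of Theorem~\ref{thm:chernoff} to $m\sim\bin\bigl(\binom{n}{2},p\bigr)$, and absorb the passage from $\binom{n}{2}$ to $n^2$ into the constant $A=1/12$ using $n\ge 2$. You merely spell out the algebraic equivalence and the final constant computation more explicitly than the paper does.
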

\begin{proof}
By the definition of $\d$, the events $\abs{\d-pn}> \tau pn$ and $\abs{m-p\binom{n}{2}}> \tau p\binom{n}{2}$ are equivalent.
Then, since the number of edges in $\sG(n,p)$ is distributed as $\bin(\binom{n}{2},p)$, we apply Chernoff's bound in Theorem~\ref{thm:chernoff} and obtain
 \[
     \pr\paren[\Bigg]{\abs[\big]{m-p\binom{n}{2}}> \tau p\binom{n}{2}}
\leq  2\exp\paren[\Bigg]{-\frac{\tau^2p\binom{n}{2}}{3}}.
\qedhere
\]
\end{proof}

\begin{lem}
  \label{lem:deg}
  Let $f\ge0$ be any function of $n$ such that $f\to\infty$. Then,
  there exists a constant $C>0$ such that for every $f/n\le p\le 1$ the
  following holds in $\sG(n,p)$ with probability at least
  $1-e^{-C(pn)^{1/3}}$. The number of vertices with degree not in
  $[\d-(pn)^{2/3},\d+(pn)^{2/3}]$ is at most $n/e^{Cf^{1/3}}$.
\end{lem}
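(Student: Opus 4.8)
The plan is to fix a vertex $v$ and show that $\pr(|d_v - \d| > (pn)^{2/3})$ is at most some quantity of the form $e^{-C'(pn)^{1/3}}$, and then use a first-moment (Markov) argument on the number $Z$ of ``bad'' vertices — those whose degree falls outside $[\d-(pn)^{2/3},\d+(pn)^{2/3}]$. A subtlety is that $\d = 2m/(n-1)$ is itself random, so I would first condition on the high-probability event from Lemma~\ref{lem:avgdeg} that $\d = pn(1+o(1))$ (taking, say, $\tau = (pn)^{-1/3}$, which makes the failure probability $2\exp(-A(pn)^{1/3}/\text{const})$, of the right order); on this event it suffices to control $\pr(|d_v - pn| > \tfrac12 (pn)^{2/3})$ since the gap between $\d$ and $pn$ is of lower order than $(pn)^{2/3}$. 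Note $d_v \sim \bin(n-1,p)$ with mean $p(n-1) = pn(1+o(1))$, so I would apply Chernoff's bounds (Theorem~\ref{thm:chernoff}) with deviation parameter $\tau_v = \Theta((pn)^{2/3}/(pn)) = \Theta((pn)^{-1/3})$, giving $\pr(|d_v - p(n-1)| > \tfrac14 (pn)^{2/3}) \le 2\exp(-\Omega(\tau_v^2 pn)) = 2\exp(-\Omega((pn)^{1/3}))$.

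Having this per-vertex bound, let $Z$ be the number of bad vertices; then $\ex Z \le n \cdot 2\exp(-c_1 (pn)^{1/3})$ for some absolute constant $c_1 > 0$. Since $p \ge f/n$, we have $pn \ge f$, so $\exp(-c_1(pn)^{1/3}) \le \exp(-c_1 f^{1/3})$ is not by itself enough — I want $\ex Z \le n/e^{Cf^{1/3}}$ with a concrete constant, but more importantly I need to convert the bound on $\ex Z$ into a high-probability bound. For that I would split the exponent: write $\exp(-c_1(pn)^{1/3}) = \exp(-\tfrac{c_1}{2}(pn)^{1/3})\exp(-\tfrac{c_1}{2}(pn)^{1/3}) \le \exp(-\tfrac{c_1}{2}(pn)^{1/3})\exp(-\tfrac{c_1}{2}f^{1/3})$, so that $\ex Z \le \big(n\exp(-\tfrac{c_1}{2}f^{1/3})\big)\cdot \exp(-\tfrac{c_1}{2}(pn)^{1/3})$. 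By Markov's inequality, $\pr\big(Z \ge n/e^{C f^{1/3}}\big) \le \ex Z \cdot e^{Cf^{1/3}}/n$; choosing $C = c_1/2$ makes this at most $2\exp(-\tfrac{c_1}{2}(pn)^{1/3})$, which has the desired form $e^{-C(pn)^{1/3}}$ after adjusting constants (and absorbing the $\exp(-\Omega((pn)^{1/3}))$ failure probability from Lemma~\ref{lem:avgdeg}, which is of the same order). I would set the final $C$ to be the minimum of the constants appearing, with a constant factor to kill the leading $2$'s, using that $pn \ge f \to \infty$.

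The main obstacle is bookkeeping the interplay between the three ``scales'' $(pn)^{1/3}$ (the deviation we can afford to lose in the vertex-level Chernoff bound and in the final probability), $f^{1/3}$ (the guaranteed savings in $\ex Z$ over the trivial bound $n$), and the fact that $\d$ is random rather than exactly $pn$: one has to choose all the implicit deviation parameters so that $\d$ stays within $o((pn)^{2/3})$ of $pn$ while the Chernoff exponents remain $\Omega((pn)^{1/3})$, and then make sure the single constant $C$ in the statement can be taken uniformly over all $p \in [f/n, 1]$ (which works because all the $\Omega(\cdot)$ and $O(\cdot)$ constants here are absolute, not depending on $p$). No single step is deep; the care is entirely in the constants and in the two-sided split of the exponent that turns a bound on $\ex Z$ into the claimed tail bound.
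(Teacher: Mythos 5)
Your proposal is correct and follows essentially the same route as the paper's proof: a per-vertex bound obtained by comparing $d_v$ to $pn$ via Chernoff and $\d$ to $pn$ via Lemma~\ref{lem:avgdeg} with $\tau=\Theta((pn)^{-1/3})$, followed by Markov's inequality on the count of bad vertices, splitting the exponent $\exp(-c(pn)^{1/3})$ into an $f^{1/3}$ part and a $(pn)^{1/3}$ part using $pn\ge f$. The only cosmetic difference is that the paper absorbs the deviation of $\d$ into the per-vertex probability via a union bound rather than handling that event globally; both are valid.
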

\begin{proof}
  We have that
  \begin{equation*}
    \pr\paren{|d_v-\d|>(pn)^{2/3}}
    \leq
    \pr\paren[\Big]{|\d-pn|>\frac{(pn)^{2/3}}{2}}
    +
    \pr\paren[\Big]{|d_v-pn|>\frac{(pn)^{2/3}}{2}}.
  \end{equation*}
  By Lemma~\ref{lem:avgdeg} with $\tau=\frac{1}{2}\cdot (pn)^{-1/3}$,
  \begin{equation*}
    \pr\paren[\Big]{ |\d
      - pn| \leq \frac{(pn)^{2/3}}{2}}
    \leq
    2\exp(-A n\cdot (pn)^{1/3}),
  \end{equation*}
  where $A$ is a positive constant. By Chernoff's inequality in Theorem~\ref{thm:chernoff}, for a
  positive constant $B$,
  \begin{equation*}
    \pr\paren[\Big]{|d_v-pn|>\frac{(pn)^{2/3}}{2}}
    \leq
    2\exp\paren[\Big]{-\frac{Bpn}{(pn)^{2/3}}}
    =
    2\exp\paren[\Big]{-B(pn)^{1/3}}.
  \end{equation*}
  Thus, there is a positive constant $C$ such that,
  \begin{equation*}
    \pr\paren{|d_v-\d|>(pn)^{2/3}}\leq \exp\paren[\Big]{-2C(pn)^{1/3}}.
  \end{equation*}
  Thus, by Markov's inequality, the probability that the number of
  vertices with degree outside $[\d-(pn)^{2/3},\d+(pn)^{2/3}]$ is more
  than $n\exp(-Cf^{1/3})$ is at most
  \begin{equation*}
    \frac{n \exp\paren[\big]{-2C(pn)^{1/3}}}
    {n\exp\paren[\big]{-Cf^{1/3}}}
    \leq
    \exp\paren[\big]{-C(pn)^{1/3}},
  \end{equation*}
  since $pn\geq f$.
 \end{proof}

\subsection{Maximum and minimum degree}
In this section, we collect several results about the maximum and minimum degree  of $\sG(n,p)$ relevant to our argument.
First, we give an easy upper-bound on the maximum degree.
\begin{lem}
\label{lem:maxdeg}
Given any constant $\gamma>0$, there exist positive constants $C$ and $K$ such that, if $p\le \gamma \log n/(n-1)$, then
the maximum degree of $\sG(n,p)$ is at most $K\log n$ with probability at least $1-n^{-C}$.
\end{lem}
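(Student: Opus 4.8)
The plan is to use a union bound over the $n$ vertices together with Chernoff's upper tail bound from Theorem~\ref{thm:chernoff}. Fix a vertex $v$; its degree $d_v$ in $\sG(n,p)$ is distributed as $\bin(n-1,p)$, so $\mu\eqdef\ex d_v=(n-1)p\le\gamma\log n$. First I would dispose of the trivial case where $\mu$ is bounded (say $\mu\le 1$), in which a direct counting/Chernoff estimate shows $\pr(d_v\ge K\log n)\le n^{-C-1}$ for suitable constants once $K$ is large enough. In the main case, apply the Chernoff bound with $1+\tau=(K\log n)/\mu$; since $\mu\le\gamma\log n$ we have $\tau\ge K/\gamma-1$, which is at least some absolute constant once $K>2\gamma$. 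This gives
\begin{equation*}
\pr(d_v\ge K\log n)\le\exp\paren[\big]{-\tfrac{\tau^2\mu}{3}}.
\end{equation*}

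The next step is to check that the exponent is at least $(C+1)\log n$ for an appropriate choice of $K$. Writing the bound in the form $\exp(-(1+\tau)\mu\cdot g(\tau))$ where $g(\tau)=\tau^2/(3(1+\tau))$ is increasing, and using $(1+\tau)\mu=K\log n$, we get $\pr(d_v\ge K\log n)\le\exp(-K\log n\cdot g(\tau))$. Since $\tau\to\infty$ as $K\to\infty$ uniformly over the range $\mu\le\gamma\log n$, we have $g(\tau)\ge 1/2$ (say) once $K$ is chosen large enough in terms of $\gamma$; hence $\pr(d_v\ge K\log n)\le\exp(-(K/2)\log n)=n^{-K/2}$. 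Choosing $K>2(C+1)$ then yields $\pr(d_v\ge K\log n)\le n^{-C-1}$.

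Finally, a union bound over all $n$ vertices gives that the maximum degree exceeds $K\log n$ with probability at most $n\cdot n^{-C-1}=n^{-C}$, as required. (One should note $K$ and $C$ are chosen to depend only on $\gamma$, consistent with the conventions in Section~\ref{sec:results}; in fact one can first fix any desired $C$ and then pick $K=K(\gamma,C)$.) There is no real obstacle here: the only mild care needed is that Theorem~\ref{thm:chernoff} as stated requires $\tau<1$ for the upper tail, so to get $\tau$ large one instead invokes the standard ``far tail'' form of the Chernoff bound, $\pr(X\ge a)\le e^{-a}(e\mu/a)^{a}$ for $a\ge\mu$; with $a=K\log n$ and $\mu\le\gamma\log n$ this is at most $(e\gamma/K)^{K\log n}$, which is $n^{-C}/n$ for $K$ a large enough constant multiple of $\gamma$ (for instance $K\ge e^2\gamma$ makes the base at most $e^{-1}$, giving $n^{-K\log n}\le n^{-C-1}$ once $K\log n\ge C+1$, i.e.\ for all large $n$).
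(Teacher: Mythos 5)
Your proof is correct and follows essentially the same route as the paper: a union bound over the $n$ vertices combined with a Chernoff upper-tail estimate for the $\bin(n-1,p)$ degree, choosing $K$ large in terms of $\gamma$ (the paper simply takes $K$ with $(K-\gamma)^2/(3\gamma)-1>0$), and your caution about the restriction $\tau<1$ in Theorem~\ref{thm:chernoff} is in fact warranted since the paper's own application glosses over it. The only blemishes are in your final parenthetical: the far-tail bound should read $\pr(X\ge a)\le e^{-\mu}\paren{e\mu/a}^{a}$ rather than $e^{-a}\paren{e\mu/a}^{a}$, and $\paren{e\gamma/K}^{K\log n}\le e^{-K\log n}=n^{-K}$ (not $n^{-K\log n}$), so the condition is $K\ge C+1$; neither slip affects the argument.
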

\begin{proof}
  Let $\Delta$ denote the maximum degree in $G$ and $d_v$ denote the
  degree of $v$ in $G$ for any vertex~$v$. By union bound and
  Chernoff's bound in Theorem~\ref{thm:chernoff}, for any $K > \gamma$,
  \begin{equation*}
    \pr\paren{\Delta\geq K\log n}
    \leq
    n\pr\paren{d_v\geq K\log n}
    \leq
    \exp\paren[\bigg]{
      \frac{-(K\log n-p(n-1))^2}{3p(n-1)}
      +\log n
    }.
  \end{equation*}
  Since $pn\leq \gamma\log n$, it suffices to choose
  $K$ large enough so that $(K-\gamma)^2/(3\gamma) -1>0$.
\end{proof}
Our results about the minimum degree require the application of the first and the second moment methods
to the number of vertices of low degree.
The following lemma gives a lower-tail estimate for a Binomial random variable, and shall be used to bound the expected number of these low-degree vertices; the next lemma after that one will give us a bound on the variance.
 \begin{lem}\label{lem:binomial}
   For every constant $\eta>0$ there exist positive constants $C_1$ and $C_2$
   such that the following holds for any function $0\le p\le
   1/\sqrt{n}$ and every integer $0<k\le (1-\eta)np$. Let
   $X\sim \bin(n,p)$. Then,
\[
\pr(X\le k) = C \frac{e^{-pn}}{\sqrt k} \left(\frac{epn}{k}\right)^k
\quad
\text{with}
\quad
C_1 \leq C\leq C_2.
\]

\end{lem}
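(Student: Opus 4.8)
The plan is to prove the estimate on $\pr(X \le k)$ for $X \sim \bin(n,p)$ by first observing that the sum $\pr(X \le k) = \sum_{j=0}^{k} \binom{n}{j} p^j (1-p)^{n-j}$ is dominated by its last term $\binom{n}{k} p^k (1-p)^{n-k}$, then estimating that single term. First I would show the ratio of consecutive terms is bounded away from $1$: for $j < k \le (1-\eta) np$,
\[
\frac{\binom{n}{j-1} p^{j-1} (1-p)^{n-j+1}}{\binom{n}{j} p^{j} (1-p)^{n-j}}
= \frac{j(1-p)}{(n-j+1)p} \le \frac{k}{(n-k)p} \le \frac{(1-\eta)np}{(n-(1-\eta)np)p},
\]
and since $p \le 1/\sqrt n$ the denominator is $(1-o(1))np$, so this ratio is at most $1-\eta/2$ for $n$ large (and for small $n$ one can absorb the finitely many cases into the constants). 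Summing the geometric-type tail gives
\[
\binom{n}{k} p^k (1-p)^{n-k} \le \pr(X \le k) \le \binom{n}{k} p^k (1-p)^{n-k} \cdot \frac{1}{1-\eta/2},
\]
so it remains to show $\binom{n}{k} p^k (1-p)^{n-k} = \Theta\!\left( \frac{e^{-pn}}{\sqrt k} (epn/k)^k \right)$ with constants depending only on $\eta$.

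Next I would handle the two factors $\binom{n}{k} p^k$ and $(1-p)^{n-k}$ separately. For the binomial coefficient, I would use the bounds $\frac{(n-k)^k}{k!} \le \binom{n}{k} \le \frac{n^k}{k!}$ together with $(n-k)^k = n^k (1 - k/n)^k = n^k \exp(k \log(1-k/n))$; since $k \le (1-\eta) np \le (1-\eta) \sqrt n$ we have $k/n = O(1/\sqrt n) \to 0$, so $(1-k/n)^k = \exp(-k^2/n + O(k^3/n^2)) = 1 + o(1)$ uniformly, giving $\binom{n}{k} = \Theta(n^k/k!)$. Then Stirling's formula in the form $k! = \Theta(\sqrt k \, (k/e)^k)$ yields $\binom{n}{k} p^k = \Theta\!\left( \frac{1}{\sqrt k} \left(\frac{enp}{k}\right)^k \right)$. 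For the other factor, $(1-p)^{n-k} = \exp((n-k)\log(1-p))$; using $\log(1-p) = -p - p^2/2 - \cdots = -p + O(p^2)$ for $p \le 1/\sqrt n$, this is $\exp(-(n-k)p + O(np^2))$. Since $np^2 \le n \cdot (1/\sqrt n) \cdot p = \sqrt n \, p$... more directly $np^2 \le 1$ because $p \le 1/\sqrt n$, and $kp \le (1-\eta)np^2 \le 1$, so $(n-k)p = np - kp = np + O(1)$, whence $(1-p)^{n-k} = \Theta(e^{-pn})$. Multiplying the two estimates gives the claimed form with $C$ bounded between two positive constants $C_1, C_2$ depending only on $\eta$.

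The main obstacle I anticipate is making all the error terms genuinely \emph{uniform} over the whole range $0 \le p \le 1/\sqrt n$ and $0 < k \le (1-\eta) np$, rather than merely asymptotic. The delicate points are: (i) the geometric-ratio bound degrades as $k/np \to 1$, which is exactly why the hypothesis $k \le (1-\eta) np$ with a fixed $\eta>0$ is needed — the factor $1/(1-\eta/2)$ must be a constant, and one must check the ratio bound holds for \emph{all} $n$ (handling small $n$ by noting the probability and the target expression are both bounded above and below by positive constants over any finite set of parameters); (ii) controlling $(1-k/n)^k$ and $\log(1-p)$ requires $k/n$ and $p$ both small, which the hypotheses guarantee, but one should carry explicit $O(\cdot)$ bounds (e.g. $-x - x^2 \le \log(1-x) \le -x$ for $x \in [0,1/2]$) rather than Taylor heuristics to keep the constants honest; and (iii) the Stirling estimate $k! = \Theta(\sqrt k (k/e)^k)$ must be invoked in its non-asymptotic two-sided form (valid for all $k \ge 1$), so that the constant $C$ is genuinely uniform in $k$. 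Once these uniformity issues are dispatched, the rest is bookkeeping.
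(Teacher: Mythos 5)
Your proposal is correct and takes essentially the same route as the paper's proof: both arguments bound $\pr(X\le k)$ above and below by a constant multiple of the single term $\binom{n}{k}p^k(1-p)^{n-k}$ via the geometric decay of consecutive terms (which is exactly where the hypotheses $k\le(1-\eta)np$ and $p\le 1/\sqrt n$ enter), and then estimate that term using Stirling's formula together with $(1-p)^{n-k}=\Theta(e^{-pn})$. Two harmless slips worth fixing: the geometric sum with ratio $1-\eta/2$ is bounded by $2/\eta$, not $1/(1-\eta/2)$; and $(1-k/n)^k$ is not $1+o(1)$ uniformly, since $k^2/n$ can be of order $1$ when $k$ is comparable to $\sqrt n$ (the factor can be as small as roughly $e^{-1}$), but it is $\Theta(1)$, which is all your argument actually uses.
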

\begin{proof}
It follows easily from Stirling's
  approximation, that there exist two positive constants $A_1$ and $A_2$ such that, for every  $0<k<\sqrt n$,
  $$
  \frac{A_1}{\sqrt{k}}\left(\frac{en}{k}\right)^k \le \binom{n}{k}\le \frac{A_2}{\sqrt{k}}\left(\frac{en}{k}\right)^k.
  $$
Moreover, there exist positive constants $B_1$ and $B_2$ such that, for every $0\le p\le 1/\sqrt n$ and every $0<k<\sqrt n$,
\[
B_1 e^{-pn} \le (1-p)^{n-k} \le B_2 e^{-pn}.
\]
Therefore, there exist positive constants $C'_1$ and  $C'_2$ not depending on $p$ or $k$ such that
  \begin{equation*}
    \pr(X= k) =  \binom{n}{k} p^k (1-p)^{n-k} =  C' \frac{e^{-pn}}{\sqrt{k}}
    \paren[\Big]{\frac{epn}{k}}^k
\quad
\text{with}
\quad
C'_1 \leq C'\leq C'_2,
  \end{equation*}
and the lower bound follows immediately since $\pr(X\le k)\geq \pr(X= k)$.
For the upper bound, let $f_i=\pr(X=i)=\binom{n}{i}p^i (1-p)^{n-i}$, and observe that, for every $i\le (1-\eta)np$,
  $$
  \frac{f_{i-1}}{f_i}\le \frac{i}{(n-i)p} \le 1-\frac{\eta}{2},
  $$
since $p\leq 1/\sqrt n \le \frac{\eta}{2-\eta}$ eventually.
Hence, there is a constant $D>0$ only depending on $\eta$ such that
$\pr(X\le k)\leq D \pr(X=k)$.
\end{proof}

\begin{lem}
  \label{lem:secondm}
  Let $Y$ denote the number of vertices of degree at most $k$ in
  $\sG(n,p)$, where $p < 1$. Then
  $\var(Y) \leq (\ex Y)^2(p/(1-p)+1/\ex Y)$.
\end{lem}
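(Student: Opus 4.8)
The plan is to compute $\var(Y)$ by writing $Y=\sum_{v\in[n]} \mathbf{1}_v$, where $\mathbf{1}_v$ is the indicator that $\deg(v)\le k$, and then expanding $\var(Y)=\sum_{v}\var(\mathbf{1}_v)+\sum_{u\ne v}\big(\pr(\mathbf{1}_u=\mathbf{1}_v=1)-\pr(\mathbf{1}_u=1)\pr(\mathbf{1}_v=1)\big)$. By symmetry, all diagonal terms are equal, all off-diagonal terms are equal, and $\ex Y = n\,q$ where $q\eqdef\pr(\deg(v)\le k)$. So the whole computation reduces to comparing the pairwise probability $\pr(\deg(u)\le k,\ \deg(v)\le k)$ with $q^2$.

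The key step is a direct conditioning on the edge $\{u,v\}$. First I would condition on whether $\{u,v\}$ is present. If it is absent (probability $1-p$), then the degree of $u$ and the degree of $v$ are determined by disjoint sets of potential edges (those from $u$ to $[n]\setminus\{u,v\}$, and those from $v$ to $[n]\setminus\{u,v\}$), hence are \emph{independent}, each distributed as $\bin(n-2,p)$. If $\{u,v\}$ is present (probability $p$), the two degrees are again conditionally independent given the edge, each distributed as $1+\bin(n-2,p)$. In either case, conditional on the status of $\{u,v\}$, the events $\{\deg u\le k\}$ and $\{\deg v\le k\}$ are independent and have equal probability; moreover, since adding the forced edge $\{u,v\}$ can only increase the degrees, the conditional probability in the ``present'' case is at most that in the ``absent'' case. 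Writing $a\eqdef\pr(\deg(u)\le k\mid \{u,v\}\notin E)$ and $b\eqdef\pr(\deg(u)\le k\mid \{u,v\}\in E)$, we get $q=(1-p)a+pb$ with $b\le a$, and
\begin{equation*}
\pr(\deg u\le k,\ \deg v\le k) = (1-p)a^2 + p b^2.
\end{equation*}
Hence the off-diagonal contribution to the sum, for each ordered pair, is $(1-p)a^2+pb^2 - q^2$. A short algebraic manipulation gives $(1-p)a^2+pb^2-\big((1-p)a+pb\big)^2 = p(1-p)(a-b)^2$, and since $0\le a-b\le a\le 1$ and $q=(1-p)a+pb\ge (1-p)a$, we can bound $p(1-p)(a-b)^2 \le p(1-p)a^2 \le \frac{p}{1-p}q^2$. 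Summing over the (at most) $n^2$ ordered pairs yields $\sum_{u\ne v}(\cdots) \le n^2 \cdot \frac{p}{1-p}\cdot q^2\cdot\frac{1}{n^2}\cdot n^2$; more carefully, there are $n(n-1)<n^2$ such ordered pairs and $q^2 = (\ex Y)^2/n^2$, so the total off-diagonal contribution is at most $\frac{p}{1-p}(\ex Y)^2$.

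Finally I would handle the diagonal: $\sum_v \var(\mathbf{1}_v) = n(q-q^2) \le nq = \ex Y$. Adding the two pieces gives $\var(Y)\le \frac{p}{1-p}(\ex Y)^2 + \ex Y = (\ex Y)^2\big(\frac{p}{1-p}+\frac{1}{\ex Y}\big)$, as claimed. The only mildly delicate point is the monotonicity statement $b\le a$ (so that $(a-b)^2\le a^2$); this follows because the conditional degree distribution given $\{u,v\}\in E$ stochastically dominates the one given $\{u,v\}\notin E$ — concretely, $1+\bin(n-2,p)$ stochastically dominates $\bin(n-2,p)$ — so the probability of the decreasing event $\{\deg u\le k\}$ is smaller in the dominating case. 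Everything else is elementary bookkeeping, so I anticipate no real obstacle; the main thing to get right is the clean conditioning on the single edge $\{u,v\}$ that decouples the two degrees.
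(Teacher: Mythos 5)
Your proof is correct and follows essentially the same route as the paper's: condition on the presence of the edge $\set{u,v}$ to decouple the two degrees as independent $\bin(n-2,p)$ variables, observe that the resulting covariance term equals $p(1-p)(a-b)^2$ (the paper writes this same quantity as $p(1-p)\,q_=(n-2,k)^2$, since $a-b=\pr(\bin(n-2,p)=k)$), and bound it by $\frac{p}{1-p}q^2$ using $(1-p)a\le q$. The only blemish is the garbled intermediate display when summing over ordered pairs, which you immediately correct in the following clause.
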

\begin{proof}
  Recall that $d_v$ has distribution $\bin(n-1,p)$. Let $q_\leq(r,t)$
  denote the probability that a random variable with distribution
  $\bin(r,p)$ has value at most $t$ and let $q_=(r,t)$ denote the
  probability that it has value exactly~$t$. Then
  \begin{equation*}
    \label{eq:secondm_def}
    \begin{split}
      \ex\paren{Y^2}
      &=
      \sum_{u,v\in V}
      \pr\paren{d_v\leq k\text{ and }d_u\leq k}
      \\
      &=
      \ex{Y}
      +
      n(n-1)
      \left(
         p\cdot q_{\leq}(n-2,k-1)^2
         +
        (1-p)\cdot q_{\leq}(n-2,k)^2
      \right).
    \end{split}
  \end{equation*}
  This holds because for any distinct vertices $u,v\in V$, the number
  of neighbours of $u$ in $V\setminus\set{v}$ and the number of
  neighbours of $v$ in $V\setminus\set{u}$ are independent random
  variables with distribution $\bin(n-2,p)$. Clearly,
  $q_{\leq}(r,t)=q_{\leq}(r,t-1)+q_{=}(r,t)$. And so
  \begin{multline}
    p\cdot q_{\leq}(n-2,k-1)^2
    +
    (1-p)\cdot q_{\leq}(n-2,k)^2
    \\
    =
    q_{\leq}(n-2,k-1)^2
    +
    2(1-p)\cdot q_{=}(n-2,k)q_{\leq}(n-2,k-1)
    +
    (1-p)\cdot q_{=}(n-2,k)^2.
  \end{multline}
  Moreover,
  \begin{equation*}
    q_{\leq}(n-1,k)^2
    =
    \paren[\big]{q_{\leq}(n-2,k-1)+(1-p)\cdot q_{=}(n-2,k)}^2.
  \end{equation*}
  Thus,
  \begin{align*}
      \ex\paren{Y^2}
      &=
      \ex{Y}
      + n(n-1)
      \paren[\Big]{q_{\leq}(n-1,k)^2 + (1-p)q_{=}(n-2,k)^2(1-(1-p))}
      \\
      &\leq
      \ex{Y}
      + n^2 q_{\leq}(n-1,k)^2\left(1+\frac{p}{1-p}\right)
      = \ex(Y)^2\left(1+\frac{p}{1-p}+\frac{1}{\ex(Y)}\right).
\qedhere
  \end{align*}
\end{proof}
The following lemma bounds the probability that $\delta(\sG(n,p))$ deviates significantly from its expected value in the range $0.9\log n/(n-1) \leq p \leq \gamma\log n/(n-1)$, where $\gamma\ge0.9$ is a constant. We will apply this lemma when we require more precise probability bounds than those that would follow from the Chernoff's  inequalities in Theorem~\ref{thm:chernoff}.
\begin{lem}
  \label{lem:min_ranges_easy}
 Let $\gamma\ge0.9$ and $0<\eta<1$ be constants. Then there exists a constant $C>0$ such that, for any functions $p$ and $\alpha$ of $n$ satisfying $0<\alpha \le 1-\eta$ and $0.9\log n/(n-1) \leq p \leq \gamma\log n/(n-1)$, the following holds:
\begin{itemize}
\item[(i)]
$\displaystyle
\pr\paren[\Big]{\delta \le \alpha p(n-1)}
    \leq
     C \exp\paren[\bigg]{
        \log n - p(n-1)\paren[\Big]{1-\alpha\log\paren[\Big]{\frac{e}{\alpha}}}
         -\frac{1}{2}\log\log n
        }$ and

\item[(ii)]
$\displaystyle
    \pr\paren[\Big]{\delta > \alpha p(n-1)}
    \leq
    C\left( \frac{\log n}{n} + \exp\paren[\bigg]{
        p(n-1)\paren[\Big]{1-\alpha\log\paren[\Big]{\frac{e}{\alpha}}}-\log
        n+\frac{1}{2}\log\log n}\right)$.
\end{itemize}
\end{lem}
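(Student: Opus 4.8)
The plan is to reduce both parts to estimates on $Y$, the number of vertices of $G=\sG(n,p)$ whose degree is at most $k\eqdef\floor{\alpha p(n-1)}$; since every degree is an integer, $\set{\delta\le\alpha p(n-1)}=\set{Y\ge1}$. Write $\mu\eqdef p(n-1)$, so $\mu=\Theta(\log n)$ by hypothesis, and note that $p\le\gamma\log n/(n-1)\le1/\sqrt n$ for large $n$ and $k\le\alpha\mu\le(1-\eta)(n-1)p$. Hence Lemma~\ref{lem:binomial} applies to a single degree $d_v\sim\bin(n-1,p)$ (with $n-1$ in place of $n$ and the same $\eta$), and gives, whenever $k\ge1$ and with absolute implied constants,
\[
  \ex Y \;=\; n\,\pr\paren{\bin(n-1,p)\le k} \;=\; \Theta\paren[\bigg]{\frac{n}{\sqrt k}\,e^{-\mu}\paren[\Big]{\frac{e\mu}{k}}^{k}},
\]
after absorbing the factors $e^{-p}=\Theta(1)$ and $\paren{(n-1)/n}^k=\Theta(1)$ into the constants.

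For part~(i) I would apply Markov's inequality: $\pr(\delta\le\alpha p(n-1))=\pr(Y\ge1)\le\ex Y$. To recover the claimed form, I use that $g(x)\eqdef x\log(e\mu/x)$ has $g'(x)=\log(\mu/x)$ and is thus increasing on $(0,\mu)$; since $\alpha\mu-1<k\le\alpha\mu<\mu$, this gives $\paren{e\mu/k}^{k}=e^{g(k)}\le e^{g(\alpha\mu)}=\exp\paren{\alpha\mu\log(e/\alpha)}$. Combined with $\sqrt k=\Theta(\sqrt\mu)=\Theta(\sqrt{\log n})$ — which produces the $-\tfrac12\log\log n$ term — and $n=e^{\log n}$, one obtains $\ex Y=O\paren[\big]{\exp(\log n-\mu(1-\alpha\log(e/\alpha))-\tfrac12\log\log n)}$, as desired. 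The degenerate case where $k=\floor{\alpha p(n-1)}$ stays bounded (so that Lemma~\ref{lem:binomial} and the $\sqrt k\asymp\sqrt{\log n}$ estimate no longer apply) would be handled separately by elementary bounds, e.g.\ $\pr(\bin(n-1,p)=0)=(1-p)^{n-1}=\Theta(e^{-\mu})$ plus finitely many comparable terms; there one checks directly that the stated right-hand side accommodates these estimates.

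For part~(ii) I would use the second moment method. By Chebyshev's inequality and Lemma~\ref{lem:secondm},
\[
  \pr(\delta>\alpha p(n-1))=\pr(Y=0)\le\frac{\var(Y)}{(\ex Y)^2}\le\frac{p}{1-p}+\frac1{\ex Y}.
\]
The first summand is $\Theta(p)=\Theta(\log n/n)$, which accounts for the term $C\log n/n$. For the second summand I would use the \emph{lower} estimate on $\ex Y$ already built into the displayed $\Theta$-bound (i.e.\ the lower half of Lemma~\ref{lem:binomial}), together with $g(k)\ge g(\alpha\mu)-g'(k)=g(\alpha\mu)-O(1)$ in the main range $k\asymp\log n$ (where $\mu/k\asymp1/\alpha=\Theta(1)$); this yields $1/\ex Y=O\paren[\big]{\exp(\mu(1-\alpha\log(e/\alpha))-\log n+\tfrac12\log\log n)}$, and summing the two contributions gives the stated bound.

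The step I expect to be the main obstacle is the asymptotic bookkeeping: extracting from the Binomial lower-tail estimate of Lemma~\ref{lem:binomial} exactly the exponent $\mu(1-\alpha\log(e/\alpha))$ together with the genuine $\pm\tfrac12\log\log n$ correction (which traces back, through Stirling's formula, to the $\sqrt{k!}$ factor and hence to $\sqrt k\asymp\sqrt{\log n}$), uniformly over all admissible pairs $(p,\alpha)$; and keeping track of which direction — upper or lower — of Lemma~\ref{lem:binomial} is needed in each part. The hypothesis $\alpha\le1-\eta$ enters precisely here, since it is what forces $k\le(1-\eta)(n-1)p$ and hence makes the lower tail of $\bin(n-1,p)$ geometric-like, so that it is controlled by its largest (the $i=k$) term.
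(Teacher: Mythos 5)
Your proposal is correct and follows essentially the same route as the paper's proof: part~(i) is the first moment bound $\pr(Y\ge1)\le\ex Y$ with the per-vertex probability evaluated via Lemma~\ref{lem:binomial}, and part~(ii) is Chebyshev's inequality combined with Lemma~\ref{lem:secondm}, the $p/(1-p)$ term giving the $C\log n/n$ summand and the $1/\ex Y$ term handled by the lower half of Lemma~\ref{lem:binomial}. If anything you are more explicit than the paper about the rounding $k=\floor{\alpha p(n-1)}$ and the provenance of the $\tfrac12\log\log n$ correction (which the paper absorbs silently into the multiplicative constant $C'$, implicitly using that $\alpha$ is bounded away from $0$ in all of its applications).
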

  \begin{proof}
    Given an arbitrary vertex $v$, let $d_v\sim\bin(n-1,p)$ be the
    degree of $v$. By Lemma~\ref{lem:binomial}, there exists a
    function $C'\in[C_1,C_2]$, where $C_1, C_2$ are positive constants
    that depend only on $\eta$ and $\gamma$  such that
  \begin{equation}
\label{eq:min-atleast-prob}
       \pr\paren{d_v\leq \alpha p(n-1)} =
      C' \exp\paren[\bigg]{
          -p(n-1)\paren[\Big]{1-\alpha\log\paren[\Big]{\frac{e}{\alpha}}}
           -\frac{1}{2}\log\log n}.
  \end{equation}
  Set $C$ to be a constant larger than $C_2 +2\gamma+1/C_1 \geq C'
  +2\gamma+1/C'$. From~\eqref{eq:min-atleast-prob}, the expected
  number of vertices with degree at most $\alpha p(n-1)$ is
\[
      C' \exp\paren[\bigg]{
         \log n - p(n-1)\paren[\Big]{1-\alpha\log\paren[\Big]{\frac{e}{\alpha}}}
           -\frac{1}{2}\log\log n},
\]
which implies~(i), since $C\ge C'$. Finally, the proof of~(ii) follows from Lemma~\ref{lem:secondm} and Chebyshev's inequality, since $p/(1-p)\le 2\gamma \log n/n$ and $C\ge 2\gamma + 1/C'$.
\end{proof}

It is convenient to state an easy consequence of Lemma~\ref{lem:min_ranges_easy} as a separate result.
We will use the following lemma when $p$ is very close to the threshold function $\beta\log n/(n-1)$ described in Theorem~\ref{thm:cases}, in order to have
a fairly precise bound of the probability that $\delta(\sG(n,p))$ deviates slightly from $pn/2$.
It is normally applied by choosing $\eps$ so that $|\eps|\log n$ is negligible compared to the other terms in~\eqref{eq:min-atleast} and~\eqref{eq:min-atmost}.

\begin{lem}
  \label{lem:min_ranges}
Let $\gamma>\beta=2/\log(e/2)$ and $0<\eta<1/2$ be constants. Then there exist positive constants $C$ and $D$ such
that the following holds. For any functions $p$ and $\eps$ of $n$ satisfying $|\eps| \le 1-2\eta$ and $0.9\log n/(n-1) \leq p \leq \gamma\log n/(n-1)$, we have
  \begin{eqnarray}
        \pr\paren[\Big]{\delta \le \frac{(1+\eps)}{2}p(n-1)}
    &\leq&
   C \exp\paren[\bigg]{
        -\frac{h}{\beta}-\frac{1}{2}\log\log n
        +D\abs{\eps}\log n
      }, \qquad\text{and}\label{eq:min-atleast} \\
      \pr\paren[\Big]{\delta > \frac{(1+\eps)}{2}p(n-1)}
    &\leq&
    C\left(\frac{\log n}{n}+\exp\paren[\bigg]{
        \frac{h}{\beta}+\frac{1}{2}\log\log n
        +D\abs{\eps}\log n}\right),\label{eq:min-atmost}
  \end{eqnarray}
where $h=h(n)$ is defined by
\[
p = \frac{\beta \log n + h}{n-1}.
\]
\end{lem}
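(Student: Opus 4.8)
The plan is to deduce Lemma~\ref{lem:min_ranges} directly from Lemma~\ref{lem:min_ranges_easy} by substituting the parametrisation $p = (\beta\log n + h)/(n-1)$ and $\alpha = (1+\eps)/2$, and then carefully Taylor-expanding the exponent around $\eps = 0$. First I would record that, with $h = p(n-1) - \beta\log n$, the hypothesis $0.9\log n/(n-1)\le p\le\gamma\log n/(n-1)$ with $\gamma>\beta$ translates into $h$ ranging over an interval of the form $[(0.9-\beta)\log n, (\gamma-\beta)\log n]$, so in particular $|h| = O(\log n)$; this is the range on which all the estimates must hold uniformly. Also, since $|\eps|\le 1-2\eta$, the value $\alpha = (1+\eps)/2$ satisfies $0 < \alpha \le 1-\eta$, so Lemma~\ref{lem:min_ranges_easy} applies with the same constant $\eta$ (after possibly shrinking it), and the constant $C$ produced there depends only on $\eta$ and $\gamma$.

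Next I would analyse the key exponent $E(\eps) \eqdef p(n-1)\bigl(1 - \alpha\log(e/\alpha)\bigr)$ appearing in Lemma~\ref{lem:min_ranges_easy}, writing $\alpha = (1+\eps)/2$. Define $g(\eps) = 1 - \tfrac{1+\eps}{2}\log\!\bigl(\tfrac{2e}{1+\eps}\bigr)$. A direct computation gives $g(0) = 1 - \tfrac12\log(2e) = 1 - \tfrac12(1+\log 2) = \tfrac12(1-\log 2) = \tfrac12\log(e/2) = 1/\beta$; and $g'(\eps) = -\tfrac12\log\!\bigl(\tfrac{2e}{1+\eps}\bigr) + \tfrac12 = -\tfrac12\log(2/(1+\eps))$, so $g'(0) = -\tfrac12\log 2$, and $g'$ is bounded on the compact range $|\eps|\le 1-2\eta$ by a constant depending only on $\eta$. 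Hence by the mean value theorem $g(\eps) = 1/\beta + O(|\eps|)$ uniformly, and therefore
\[
E(\eps) = p(n-1)\,g(\eps) = p(n-1)/\beta + O\bigl(p(n-1)\,|\eps|\bigr) = \frac{\beta\log n + h}{\beta} + O(|\eps|\log n) = \log n + \frac{h}{\beta} + O(|\eps|\log n),
\]
where I used $p(n-1) = \beta\log n + h = O(\log n)$ to absorb the error term into $O(|\eps|\log n)$. Substituting this into part~(i) of Lemma~\ref{lem:min_ranges_easy}, the bracketed exponent becomes
\[
\log n - E(\eps) - \tfrac12\log\log n = \log n - \log n - \frac{h}{\beta} + O(|\eps|\log n) - \tfrac12\log\log n = -\frac{h}{\beta} - \tfrac12\log\log n + O(|\eps|\log n),
\]
which yields~\eqref{eq:min-atleast} once we choose $D$ to be any constant dominating the hidden constant in the $O$-term and absorb it inside $D|\eps|\log n$. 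For part~(ii), the same substitution turns the second term of the bound in Lemma~\ref{lem:min_ranges_easy}(ii) into $\exp\bigl(E(\eps) - \log n + \tfrac12\log\log n\bigr) = \exp\bigl(\tfrac{h}{\beta} + \tfrac12\log\log n + O(|\eps|\log n)\bigr)$, while the first term $\log n/n$ is unchanged; collecting the constants gives~\eqref{eq:min-atmost}.

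The only genuinely delicate point — and the one I would present most carefully — is the uniformity of the error term: all implicit constants in the $O$-estimates above must be independent of both $p$ and $\eps$, which is exactly why I need $p(n-1) = \Theta(\log n)$ (so that $E(\eps)$'s error $O(p(n-1)|\eps|)$ is genuinely $O(|\eps|\log n)$) and why I need $|\eps|$ confined to a compact interval bounded away from $\pm 1$ (so that $g'$ stays bounded and $\alpha$ stays bounded away from $0$ and $1$, keeping us inside the hypotheses of Lemma~\ref{lem:min_ranges_easy} and the Stirling-type estimates behind it). I would also remark that $D$ can be taken to depend only on $\eta$ and $\gamma$, and that in applications one picks $\eps$ so small that $D|\eps|\log n = o(1)$ or at least $o(h/\beta + \tfrac12\log\log n)$, making the distortion harmless. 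No fresh probabilistic input is needed — the whole proof is an exercise in converting one parametrisation into another and controlling a first-order Taylor remainder — so I do not anticipate any real obstacle beyond bookkeeping of constants.
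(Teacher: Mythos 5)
Your proposal is correct and follows essentially the same route as the paper: reduce to Lemma~\ref{lem:min_ranges_easy} via $\alpha=(1+\eps)/2$ (which lies in $[\eta,1-\eta]$) and use a Lipschitz bound $|\alpha\log(e/\alpha)-1+1/\beta|\le D'|\eps|$ to convert the exponent, absorbing the error into $D|\eps|\log n$ with $D=\gamma D'$. Your Taylor-expansion bookkeeping, including $g(0)=1/\beta$ and the uniformity discussion, is just a more explicit version of the paper's one-line argument.
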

\begin{proof}
By putting $\alpha=(1+\eps)/2$, we have that $\eta\le\alpha\le1-\eta$, and there is some constant $D'>0$ depending only on $\eta$ such that $|\alpha\log(e/\alpha) - 1 + 1/\beta|\le D' |\eps|$. The result follows immediately from Lemma~\ref{lem:min_ranges_easy} and setting $D=\gamma D'$.
\end{proof}
At this point, the reader may suspect that the relevant range of $p$ for the study of the evolution of $\delta(\sG(n,p))$ corresponds to $p=\Theta(\log n/n)$.
Indeed, a careful application of Lemma~\ref{lem:min_ranges_easy} yields the following: if $p\sim c\log n/n$ for some constant $c>1$, then a.a.s.\ $\d\sim c\log n$ and $\delta \sim g(c)\d$,
where $g:[1,\infty)\to(0,1)$ is a strictly increasing continuous function  with $\lim_{c\to1}g(c)=0$ and $\lim_{c\to\infty}g(c)=1$.
We do not prove the above claim, as it is not needed in our argument, but rather collect several related statements together in the following lemma.
\begin{lem}\label{lem:newMindeg}\hspace{0em}
\begin{enumerate}
\item[(i)] For any $p\le 0.9 \log n/(n-1)$, a.a.s.\ $\delta(\sG(n,p))=0$.
\item[(ii)]  For any constant $\epsilon>0$, there exist constants $\gamma>1$ and $C>0$
  such that, for every $\frac{0.9\log n}{n-1}\le p\le\frac{\gamma\log
    n}{n-1}$, we have that  $\delta(\sG(n,p))\le\epsilon\d(\sG(n,p))$ with probability at least $1-n^{-C}$. 
\item[(iii)]
   Let $\gamma > 1$ be a constant. There exist positive constants
   $\eps$ and $C$ such that, for $p\geq \gamma \log n/(n-1)$, we have
   that $\delta > \eps p(n-1)$ with probability at least $1-n^{-C}$. 
\item[(iv)] For every constants $0<\theta<1$ and $C>0$, there exists a constant $\gamma>0$, such that for all $p\ge \gamma\log n/(n-1)$,
we have $\pr(\delta(\sG(n,p))\le\theta p(n-1))\le n^{-C}$.
\end{enumerate}
\end{lem}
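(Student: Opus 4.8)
The plan is to prove~(i) by a standard second-moment argument on the number of isolated vertices, and to derive~(ii)--(iv) from the sharp binomial lower-tail estimate of Lemma~\ref{lem:min_ranges_easy} (valid only for $p=\Theta(\log n/n)$), combined with a crude Chernoff bound for denser $p$ and, in part~(ii), with the concentration of $\d$ from Lemma~\ref{lem:avgdeg}. For~(i), I would let $Y$ count the vertices of degree at most $0$, so $\ex Y=n(1-p)^{n-1}$. Since $p\le 0.9\log n/(n-1)$ forces $p=o(1)$, the estimate $\log(1-p)\ge -p-p^2$ gives $(1-p)^{n-1}\ge e^{-p(n-1)-p^2(n-1)}\ge(1-o(1))\,n^{-0.9}$ (using $p^2(n-1)\le(0.9\log n)^2/(n-1)=o(1)$), hence $\ex Y\ge(1-o(1))\,n^{0.1}\to\infty$. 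Lemma~\ref{lem:secondm} with $k=0$ then gives $\var(Y)\le(\ex Y)^2\bigl(p/(1-p)+1/\ex Y\bigr)=o\bigl((\ex Y)^2\bigr)$ (both terms tend to $0$), so Chebyshev's inequality yields $\pr(Y=0)=o(1)$ and therefore a.a.s.\ $\delta(\sG(n,p))=0$ (the case $p=0$ being trivial).

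For~(ii) I may assume $\eps<1$, since otherwise $\delta(\sG(n,p))\le\d(\sG(n,p))\le\eps\,\d(\sG(n,p))$ holds with certainty (the minimum degree never exceeds the average degree, which is at most $\d$). I would fix a small constant $\tau>0$, put $\alpha\eqdef\eps(1-\tau)<1$, and note that $1-\alpha\log(e/\alpha)\in(0,1)$, so the interval $\bigl(1,\,1/(1-\alpha\log(e/\alpha))\bigr)$ is nonempty; I pick $\gamma$ in it and pick $\eta$ with $0<\eta<1-\alpha$. Applying Lemma~\ref{lem:min_ranges_easy}(ii) with parameter $\alpha$ over the range $0.9\log n/(n-1)\le p\le\gamma\log n/(n-1)$, and using $p(n-1)\le\gamma\log n$, the exponent $p(n-1)(1-\alpha\log(e/\alpha))-\log n+\tfrac12\log\log n$ is at most $\bigl(\gamma(1-\alpha\log(e/\alpha))-1\bigr)\log n+\tfrac12\log\log n\to-\infty$ because $\gamma(1-\alpha\log(e/\alpha))<1$; together with the harmless $\log n/n$ term this gives $\pr\bigl(\delta>\alpha p(n-1)\bigr)\le n^{-c}$ for some constant $c>0$. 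On the event $\{\delta\le\alpha p(n-1)\}\cap\{\d\ge(1-\tau)pn\}$, whose complement has probability at most $n^{-c}+2\exp(-A\tau^2pn^2)$ by the above and Lemma~\ref{lem:avgdeg} (recall $pn\ge 0.9\log n$), one has $\delta\le\alpha p(n-1)\le\alpha pn=\eps(1-\tau)pn\le\eps\,\d$. Taking any constant $C<c$ would then finish~(ii).

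Parts~(iii) and~(iv) both reduce to bounding $\pr\bigl(\delta\le\theta p(n-1)\bigr)$ for a suitable fraction $\theta\in(0,1)$, now using Lemma~\ref{lem:min_ranges_easy}(i) when $p=\Theta(\log n/n)$ and Chernoff's inequality when $p$ is larger. Set $\kappa\eqdef 1-\theta\log(e/\theta)>0$. In~(iii) the constant $\gamma>1$ is given, and I would choose $\eps=\theta$ small enough that $\gamma\kappa>1$; in~(iv) the constants $\theta$ and $C$ are given, and I would choose $\gamma\ge\max\{1,\,(C+1)/\kappa\}$, so that $\gamma\kappa\ge C+1$ (and in~(iii) I then take $C$ to be any positive constant less than $\gamma\kappa-1$). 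Next fix constants $\Gamma\ge\max\{\gamma,\,2(C+1)/(1-\theta)^2\}$ and $\eta\in(0,1-\theta)$. For $\gamma\log n/(n-1)\le p\le\Gamma\log n/(n-1)$, Lemma~\ref{lem:min_ranges_easy}(i) applied with $\alpha=\theta$ and with its two constants taken to be $\Gamma$ and $\eta$ gives $\pr\bigl(\delta\le\theta p(n-1)\bigr)\le C'\exp\bigl(\log n-p(n-1)\kappa-\tfrac12\log\log n\bigr)\le C'\,n^{1-\gamma\kappa}(\log n)^{-1/2}\le n^{-C}$ for large $n$; and for $p>\Gamma\log n/(n-1)$, a union bound over the $n$ vertices combined with Chernoff's inequality applied to $d_v\sim\bin(n-1,p)$ gives $\pr\bigl(\delta\le\theta p(n-1)\bigr)\le n\exp\bigl(-(1-\theta)^2 p(n-1)/2\bigr)\le n^{1-(1-\theta)^2\Gamma/2}\le n^{-C}$. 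This would establish both~(iii) and~(iv).

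The binomial and Chernoff estimates are routine once Lemma~\ref{lem:min_ranges_easy} is in hand, so the only genuinely delicate point is part~(ii): checking that the constant $\gamma$ can be taken \emph{strictly} larger than $1$. This rests on the strict inequality $1-\alpha\log(e/\alpha)<1$ for $0<\alpha<1$ (equivalently $\alpha\log(e/\alpha)>0$), which leaves a nonempty window of densities $p$ slightly above $\log n/(n-1)$ in which $\delta$ is still a constant fraction of $p(n-1)$ with polynomially small failure probability. Beyond that, the remaining chore is the bookkeeping required to guarantee that every constant introduced ($\tau$, $\alpha$, $\gamma$, $\eta$, $\Gamma$, $c$, $C$, $C'$) depends only on the prescribed data --- on $\eps$ in~(ii), on $\gamma$ in~(iii), on $\theta$ and $C$ in~(iv) --- and never on the function $p$ under discussion.
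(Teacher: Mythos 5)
Your proof is correct and follows essentially the same route as the paper: parts (ii) and (iii) rest on Lemma~\ref{lem:min_ranges_easy} with a careful choice of $\alpha$ and $\gamma$ exploiting that $1-\alpha\log(e/\alpha)\in(0,1)$ for $\alpha\in(0,1)$, and part (iv) on Chernoff plus a union bound. The only (harmless) differences are that you prove (i) via the second-moment method on isolated vertices where the paper merely cites it, and you split (iv) into two ranges where a single Chernoff bound suffices, while the paper instead reuses (iv) to dispatch the large-$p$ case of (iii).
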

\begin{proof}
Part~(i) is a well-known fact (see~e.g.~\cite{bollobas85}).
We first prove part~(iv).  By Theorem~\ref{thm:chernoff},
 $$
 \pr\Big(\delta(\sG(n,p))\le \theta  p(n-1)\Big)\le n \exp\left(-(1-\theta)^2 p (n-1)/2\right)\le n \exp\left(-(1-\theta)^2(\gamma/2)\log n \right).
 $$
Thus, the statement holds by choosing $\gamma$ sufficiently large so that $(1-\theta)^2\gamma/2-1>C$.

Next, we prove part~(iii). From part (iv), there is a $\rho>1$ such that, for any $p\ge \rho \log n/(n-1)$,
$\delta(\sG(n,p))>(1/2)p(n-1)$ with probability at least $1-1/n$. Thus, for $\gamma\ge \rho$, statement (iii) follows immediately  by choosing any $\eps\le 1/2$ and any $C\le 1$.  So suppose otherwise that $1 < \gamma < \rho$. By Lemma~\ref{lem:min_ranges_easy},
  there is a positive constant $C'$ so that, for every $p$ in the considered range,
  \begin{equation*}
    \pr\paren{\delta(\sG(n,p))  \le \eps p(n-1)}
    \leq
    C'\exp\paren[\bigg]{
        \log n -\gamma \log n\paren[\Big]{1-\eps\log\paren[\Big]{\frac{e}{\eps}}}     }.
  \end{equation*}
  Since $\gamma > 1$, we can choose $0<\eps\le 1/2$ small enough so that
  $\gamma\paren{1-\eps\log\paren{\frac{e}{\eps}}} > 1$. Then there is a constant $C''>0$ such that the above probability is at most $n^{-C''}$. The statement follows by choosing $C=\min\{1,C''\}$.

Finally, we prove part~(ii).  We assume without loss of generality that $\eps < 1$.
   We have that
  \begin{equation}\label{eq:0}
    \pr\paren[\Big]{\delta(\sG(n,p)) > \eps \d(\sG(n,p))}
    \leq
    \pr\paren[\Big]{\d(\sG(n,p)) < 0.8\log n}
    +
    \pr\paren[\Big]{\delta(\sG(n,p)) > 0.8 \eps \log n}.
  \end{equation}
By Lemma~\ref{lem:avgdeg}, there is a positive constant $B$ such that, for any $p \ge 0.9\log n/(n-1)$,
  \begin{equation}
    \pr\paren[\Big]{\d(\sG(n,p)) < 0.8\log n}
    \leq
    \pr\paren[\Big]{\d(\sG(n,p)) < (8/9)pn}
    \leq
    \exp(-Bn\log n).\label{eq:1}
  \end{equation}
Let $1<\gamma < 8/7$ be a sufficiently small constant which we specify later, and put $\alpha=0.8/\gamma$.
In particular, $0.7<\alpha<0.8$.
By Lemma~\ref{lem:min_ranges_easy}, there is a constant $D>0$ such that, for every $p$ in the range
$0.9\log n/(n-1) \le p \le \gamma\log n/(n-1)$, we have
  \begin{eqnarray*}
   \pr\paren[\Big]{\delta(\sG(n,p)) > 0.8 \eps \log n}&\le& \pr\paren[\Big]{\delta(\sG(n,p)) > \eps \alpha p(n-1)}
    \\
    &\leq&
    D\left( \frac{\log n}{n}+ \exp\paren[\bigg]{
        \gamma\log n \paren[\Big]{1-\eps \alpha\log\paren[\Big]{\frac{e}{\eps\alpha}}}-\log
        n+\frac{1}{2}\log\log n}\right),
  \end{eqnarray*}
where we also used that $1-\eps\alpha\log(\frac{e}{\eps\alpha}) >0$, as $\eps\alpha<0.8$.
Moreover, choosing  $\gamma > 1$ small enough ensures that
\[
 B\eqdef \gamma\paren[\bigg]{1-\eps\alpha\log\paren[\Big]{\frac{e}{\eps\alpha}}} <
\gamma\paren[\bigg]{1-0.7\eps\log\paren[\Big]{\frac{e}{0.7\eps}}} < 1,
\]
  and the above probability is at most
  \begin{equation}
    D\left(\log n/n+ \exp\paren[\bigg]{
        -(1-B)\log n +\frac{1}{2}\log \log n}\right).\label{eq:2}
  \end{equation}
Combining~\eqref{eq:0}, \eqref{eq:1} and~\eqref{eq:2} yields statement~(ii), for $C$ sufficiently small.
\end{proof}
Finally, we include a result that compares the minimum degree of $\sG(n,p)$ and  $\sG(n,\hat p)$, when $p$ and $\hat p$ are close to one another.
%
%
%
\begin{lem}
  \label{lem:mindeg_near}
  For any constants $1<\gamma_1<\gamma_2$ and $\epsilon>0$, there exist positive constants
  $\eta$ and $C$ such that, for any functions $p$ and $\hat p$ satisfying
  $\gamma_1\log n/(n-1)\le p\le \hat p\le\gamma_2\log n/(n-1)$ and
  $\hat p/p-1\le\eta$,
  \[
   \frac{\delta(\sG(n,\hat p)}{\delta(\sG(n,p))}-1 \le
  \epsilon
  \]
  with probability at least
  $1-n^{-C}$.
\end{lem}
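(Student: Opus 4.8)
The plan is to couple $\sG(n,p)$ and $\sG(n,\hat p)$ in the standard way, by writing $\sG(n,\hat p)$ as the union of $\sG(n,p)$ with an independent copy of $\sG(n,p')$, where $p'$ is defined by $1-\hat p=(1-p)(1-p')$, so that $p'\le\hat p-p\le\eta p$. Thus it suffices to control two quantities: first, a lower bound on $\delta(\sG(n,p))$ that holds with probability $1-n^{-C'}$; and second, an upper bound on $\max_v X_v$, where $X_v$ is the number of extra edges at $v$ coming from the second graph, so that $\delta(\sG(n,\hat p))\le\delta(\sG(n,p))+\max_v X_v$. Since $\gamma_1>1$ is a constant, Lemma~\ref{lem:newMindeg}(iii) (or a direct application of Lemma~\ref{lem:min_ranges_easy}) gives a constant $c_1>0$ with $\delta(\sG(n,p))\ge c_1\log n$ with probability at least $1-n^{-C'}$ for all $p$ in the stated range. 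For the extra edges, each $X_v\sim\bin(n-1,p')$ with $p'\le\eta\gamma_2\log n/(n-1)$, so by Chernoff's bound (Theorem~\ref{thm:chernoff}) together with a union bound over the $n$ vertices, $\pr(\max_v X_v\ge K\log n)\le n\exp(-\Omega(K\log n))\le n^{-C''}$ once $K$ is a large enough constant depending on $\eta$ and $\gamma_2$.

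Combining these, with probability at least $1-n^{-C}$ (for $C=\min\{C',C''\}/2$, say) we get simultaneously $\delta(\sG(n,p))\ge c_1\log n$ and $\delta(\sG(n,\hat p))\le\delta(\sG(n,p))+K\log n$, whence
\[
\frac{\delta(\sG(n,\hat p))}{\delta(\sG(n,p))}-1 \le \frac{K\log n}{\delta(\sG(n,p))} \le \frac{K}{c_1}\cdot\frac{\eta\gamma_2\log n}{\log n}\cdot\frac{1}{\eta\gamma_2},
\]
which, more carefully, should really be bounded as $\le K'\eta$ for a constant $K'$: indeed $p'\le\eta p$, so $\ex X_v\le\eta p(n-1)\le\eta\gamma_2\log n$, and Chernoff gives $\max_v X_v\le 2\eta\gamma_2\log n$ (plus an additive $O(\log\log n)$ to absorb the regime where $\eta\log n$ is small, or one simply uses the cruder bound $\max_v X_v\le C_0\log n$ and notes $\eta$ can be taken small enough that $C_0/c_1\le\epsilon$ after re-examining constants). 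Either way, choosing $\eta$ small enough in terms of $\epsilon$, $c_1$ and $\gamma_2$ makes the ratio at most $\epsilon$.

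The only mildly delicate point is making the bound on the number of added edges at a vertex genuinely proportional to $\eta$ rather than merely $O(\log n)$: when $\eta\log n=\Omega(1)$ this is immediate from the multiplicative Chernoff bound applied to $\bin(n-1,p')$ with mean $\le\eta\gamma_2\log n$, giving $\max_v X_v\le 2\eta\gamma_2\log n$ with probability $1-o(1)$ after the union bound (here one uses $\gamma_1>1$ so that $\log n\gg 1$); when $\eta\log n=O(1)$ the extra edges are so few that $\max_v X_v=O(1)$ deterministically with high probability, which is negligible compared to $\delta(\sG(n,p))=\Omega(\log n)$. In all cases the ratio is $O(\eta)+o(1)$, and shrinking $\eta$ finishes the proof. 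This regime-splitting to get the clean $O(\eta)$ dependence is the main obstacle; everything else is a routine coupling-plus-Chernoff argument.
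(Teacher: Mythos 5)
Your route is genuinely different from the paper's. The paper never couples the two graphs: it applies Lemma~\ref{lem:min_ranges_easy} twice, using the function $f(y)=1-y\log(e/y)$ and the bound $f'\le -D$ on a compact interval to show that $\delta(\sG(n,p))>\alpha x\log n$ and $\delta(\sG(n,\hat p))\le(\alpha+\zeta)\hat x\log n$ each hold with probability $1-n^{-C}$, where $\zeta=O(\eta)$ because $f^{-1}$ is Lipschitz there. That argument bounds each minimum degree separately by deterministic quantities, so it is valid for \emph{any} joint distribution of the two graphs; your sprinkling argument proves the inequality only under the monotone coupling. This is harmless for the application in Section~\ref{sec:process} (where the graphs are coupled exactly as you assume), and your approach has the merit of needing only a crude lower bound on $\delta(\sG(n,p))$ (Lemma~\ref{lem:newMindeg}(iii)) plus an upper tail for $\bin(n-1,p')$, rather than the two-sided precision of Lemma~\ref{lem:min_ranges_easy}.

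There is, however, a quantitative gap in your tail estimate for $\max_v X_v$. Since $\eta$ is a \emph{constant}, the case distinction on whether $\eta\log n$ is $\Omega(1)$ or $O(1)$ is vacuous; the real issue is that the multiplicative Chernoff bound of Theorem~\ref{thm:chernoff} (stated only for $\tau<1$) gives $\pr(X_v\ge 2\mu)\le\exp(-\mu/3)\le n^{-\eta\gamma_2/3}$ with $\mu\le\eta\gamma_2\log n$, which does \emph{not} survive a union bound over $n$ vertices once $\eta<3/\gamma_2$ --- precisely the regime you need. The claim ``$\max_v X_v\le 2\eta\gamma_2\log n$ w.h.p.'' is therefore not available. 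The fix is standard but must be stated correctly: for the target threshold $t=K\log n$ with $K\eqdef\epsilon c_1$ (where $\delta(\sG(n,p))\ge c_1\log n$ w.h.p.), use the first-moment bound $\pr(X_v\ge t)\le\binom{n-1}{t}(p')^t\le(e(n-1)p'/t)^t\le(2e\eta\gamma_2/K)^{K\log n}$, and then choose $\eta$ so small that $2e\eta\gamma_2/K\le e^{-3/K}$, giving $n^{-3}$ per vertex and $n^{-2}$ after the union bound. In other words, the deviation cannot be taken proportional to the mean; instead, for every fixed $K>0$ one can force $\max_v X_v\le K\log n$ by shrinking $\eta$ (exponentially in $1/K$, which is fine since all these are constants). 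With that correction, and restricted to the monotone coupling, your argument is complete.
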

\begin{proof}
Assume without loss of generality that $0<\epsilon<1$. Choose constants $\gamma_0$ and $\gamma_3$ such that $1<\gamma_0<\gamma_1$ and $\gamma_2<\gamma_3$.
The function $f(y)=1-y\log(e/y)$ is a bijection from $[0,1]$ to $[0,1]$ (defining $f(0)=1$), and is strictly decreasing in that domain. Therefore, we can define the constants $y_i=f^{-1}(1/\gamma_i)$ for $i=0,1,2,3$, which satisfy $0<y_0<y_1<y_2<y_3<1$. Moreover, there exists a constant $D>0$ such that $f'\le-D$ for all $y$ in $[y_0, y_3]$, since this interval is a compact set and $f'<0$ is continuous there.

Pick two positive constants $\zeta$ and $\xi$ sufficiently small so that the following conditions are satisfied: $y_2+\zeta \le y_3$; $\zeta \le \eps y_0/3$; $\xi \le \gamma_1D\zeta/4$; and $\gamma_1/(1+\xi)\ge \gamma_0$ (note that the choice of $\xi$ depends on $\zeta$). With all these constants in mind, we choose $\eta>0$ in the statement sufficiently small so that $\eta \le \gamma_1D\zeta/4$ and $\eta \le \eps/3$.

Put $x=p(n-1)/\log n$ and $\hat x=\hat p(n-1)/\log n$. In general, $x$ and $\hat x$ are functions of $n$ with $\gamma_1\le x \le \hat x\le\gamma_2$, and moreover, from the assumption in the statement, $\hat x\le(1+\eta)x$.
Define $\alpha=f^{-1}((1+\xi)/x)$ and $\hat\alpha=\alpha+\zeta$, also functions of $n$. From the previous conditions $\gamma_1\le x \le\gamma_2$ and
$1+\xi\le \gamma_1/\gamma_0$, we deduce that $y_0\le\alpha\le y_2$. From this and since $y_2+\zeta \le y_3$, we get
$\hat\alpha\le y_3$. So in particular $\alpha,\hat\alpha\in[y_0,y_3]$. We have
\begin{equation}\label{eq:xfa}
x(1-\alpha\log(e/\alpha)) = xf(\alpha) = 1+\xi.
\end{equation}
Moreover, using the bound on $f'$ in $[y_0,y_3]$ and some of the earlier constraints on $x$, $\hat x$, $\xi$ and $\eta$,
\begin{equation}\label{eq:xfahat}
\hat x(1-\hat\alpha\log(e/\hat\alpha))\le (1+\eta)x(f(\alpha) - D\zeta) \le (1+\eta) ( 1+\xi - \gamma_1D\zeta )
 \le 1 - \gamma_1D\zeta/2.
\end{equation}
Using Lemma~\ref{lem:min_ranges_easy} together with~\eqref{eq:xfa} and~\eqref{eq:xfahat}, we conclude that
\[
\delta(\sG(n,p)) > \alpha x \log n
\qquad\text{and}\qquad
\delta(\sG(n,\hat p)) \le \hat\alpha \hat x \log n
\]
with probability at least $1-n^{-C}$, for any positive constant $C$ satisfying $C < \min\{1,\xi, \gamma_1D\zeta/2\}$. This last event implies that
\[
\delta(\sG(n,\hat p)) \le (1+\zeta/\alpha)(1+\eta) \alpha x \log n \le (1+\epsilon/3)^2 \delta(\sG(n,p)) \le (1+\epsilon)\delta(\sG(n,p)),
\]
since $\zeta \le \eps y_0/3 \le \eps \alpha/3$, $\eta \le \eps/3$ and $\eps<1$. This completes the proof of the Lemma.
\end{proof}
%
%
\subsection{Light vertices}

Recall that an $\eps$-light vertex was defined to be a vertex of degree at most $\delta+\eps\d$. The following result shows that a.a.s.\ all $\eps$-light vertices of $\sG(n,p)$ are at least three steps apart for a certain range of $p$.
\begin{lem}
  \label{lem:small}
  Suppose $0.9\log n/(n-1) \le p \le \gamma\log n/(n-1)$ for some
  constant $\gamma\ge0.9$. Then there exist constants $\epsilon>0$ and $C>0$
  such that the following holds in $\sG(n,p)$ with probability at
  least $1-n^{-C}$. There is no pair of adjacent $\eps$-light
  vertices and no two $\eps$-light vertices have a common neighbour.
\end{lem}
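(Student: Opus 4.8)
The plan is to bound, via a first-moment argument, the expected number of "bad configurations": pairs of vertices that are both $\eps$-light and lie within distance $2$ of each other. The main difficulty is that being $\eps$-light depends on the value of $\delta$, which is itself a random variable; so first I would fix a deterministic threshold. Using Lemma~\ref{lem:min_ranges_easy} (or Lemma~\ref{lem:newMindeg}(iv) when $\gamma$ is large, Lemma~\ref{lem:min_ranges} when $p$ is near $\beta\log n/(n-1)$), one shows that with probability at least $1-n^{-C'}$ we have $\delta \ge \alpha_0 p(n-1)$ for a suitable constant $\alpha_0>0$, and also $\d = (1+o(1))pn$ with very high probability by Lemma~\ref{lem:avgdeg}. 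Hence, on this high-probability event, every $\eps$-light vertex has degree at most $\delta + \eps\d \le (\alpha_0 + 2\eps)p(n-1) =: \lambda p(n-1)$, where $\lambda = \lambda(\eps) < 1$ can be made strictly less than $1$ by choosing $\eps$ small (since $\alpha_0$ is a fixed constant and we are free to pick, say, $\alpha_0 \le 1/2$ and then $\eps$ tiny). So it suffices to show that a.a.s.\ there is no pair of vertices, both of degree at most $\lambda p(n-1)$, that are adjacent or share a common neighbour.

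Next I would estimate the probability that a fixed pair $\{u,v\}$ forms such a configuration. For the "common neighbour" case: condition on a fixed vertex $w$ being adjacent to both $u$ and $v$ (probability $p^2$); then the remaining degrees of $u$ and $v$ (counting edges to $[n]\setminus\{u,v,w\}$) are essentially independent $\bin(n-3,p)$ variables, and each is at most $\lambda p(n-1) - 1$ with probability at most the bound from Lemma~\ref{lem:binomial}, namely $O\bigl(e^{-pn}(epn/k)^k/\sqrt k\bigr)$ with $k \sim \lambda pn$. Writing $p(n-1) = c\log n$ with $c \ge 0.9$, this single-vertex probability is at most $n^{-c(1-\lambda\log(e/\lambda))+o(1)}$, and the exponent $c(1-\lambda\log(e/\lambda))$ is a positive constant bounded away from $0$ uniformly in the range (using $c\ge 0.9$ and $\lambda<1$). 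Summing over the $O(n^3)$ choices of $(u,v,w)$ and multiplying by $p^2 = O(\log^2 n/n^2)$, the expected number of such configurations is at most $n^{3} \cdot n^{-2} \cdot n^{-2c(1-\lambda\log(e/\lambda))+o(1)} = n^{1 - 2c(1-\lambda\log(e/\lambda))+o(1)}$, which tends to $0$ provided $2c(1-\lambda\log(e/\lambda)) > 1$; this can be guaranteed for all $c \ge 0.9$ by taking $\eps$ (hence $\lambda$) small enough, since $c(1-\lambda\log(e/\lambda)) \to 0.9 \cdot 1 > 1/2$ as $\lambda \to 0$ with $c=0.9$ (and is increasing in $c$). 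The "adjacent" case is analogous and easier: condition on the edge $uv$ (probability $p$), bound each of the two residual degrees as above, and sum over $O(n^2)$ pairs, giving expectation $O(n^2 \cdot n^{-1} \cdot n^{-2c(1-\lambda\log(e/\lambda))+o(1)}) \to 0$ under the same condition.

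Finally I would combine the pieces: by Markov's inequality the probability that any bad configuration exists is $o(1)$, and in fact one should track the rate to get an $n^{-C}$ bound — the expected count is $n^{-\Omega(1)}$, so Markov gives probability $n^{-\Omega(1)}$ of a bad configuration, and intersecting with the high-probability lower bound on $\delta$ and concentration of $\d$ (each failing with probability at most $n^{-\Omega(1)}$) yields the claimed $1 - n^{-C}$ for a suitable constant $C>0$. The main obstacle is purely the bookkeeping of making the constants ($\alpha_0$, then $\lambda$, then $\eps$, then $C$) fit together uniformly over the whole range $0.9\log n/(n-1)\le p\le\gamma\log n/(n-1)$, and in particular checking that the exponent $2c(1-\lambda\log(e/\lambda)) - 1$ stays bounded away from $0$ at the worst endpoint $c = 0.9$; everything else is a routine first-moment computation using Lemma~\ref{lem:binomial}.
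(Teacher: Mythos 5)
The overall architecture (cap the degree of $\eps$-light vertices by a deterministic threshold, then run a first-moment/union-bound computation over adjacent pairs and cherries using Lemma~\ref{lem:binomial}) is the same as the paper's, but the key step where you convert ``$\eps$-light'' into a deterministic degree bound has a genuine gap. You invoke a \emph{lower} bound $\delta\ge\alpha_0 p(n-1)$ (citing Lemma~\ref{lem:newMindeg}(iv), which indeed gives lower bounds) and then immediately use it as if it were an upper bound, writing $\delta+\eps\d\le(\alpha_0+2\eps)p(n-1)$. What you actually need is an \emph{upper} bound $\delta\le\alpha_0 p(n-1)$ holding with probability $1-n^{-\Omega(1)}$; this comes from the second-moment estimate, i.e.\ Lemma~\ref{lem:min_ranges_easy}(ii). (As a side remark, the lower bound you state is also false near the bottom of the range: for $p$ close to $0.9\log n/(n-1)$ the minimum degree is not $\Omega(pn)$.)

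Once the direction is corrected, the second problem surfaces: you claim you are ``free to pick, say, $\alpha_0\le 1/2$'' and then let $\lambda\to0$, checking the exponent condition $2c(1-\lambda\log(e/\lambda))>1$ only in that regime. But the upper-bound threshold $\alpha_0$ is \emph{not} freely choosable: for it to hold a.a.s.\ one needs $c(1-\alpha_0\log(e/\alpha_0))<1$, which for large $\gamma$ forces $\alpha_0$ close to $1$ (for $c=100$, say, $\alpha_0$ must exceed roughly $0.82$). So $\lambda$ is pinned near the true relative minimum degree and cannot be sent to $0$. The argument then requires threading a needle: choose $\alpha_0=\alpha_0(p)$ adaptively so that $c(1-\alpha_0\log(e/\alpha_0))$ is a constant strictly below $1$ (so the upper bound on $\delta$ holds with polynomial failure probability), while the slightly perturbed value $c(1-\lambda\log(e/\lambda))$ stays strictly above $1/2$ (so the union bound over $O(n^3p^2)=n^{1+o(1)}$ cherries converges). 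This is exactly what the paper's proof does by solving $x(1-\alpha\log(e/\alpha))=0.8$ for each $x=p(n-1)/\log n$ and verifying the perturbed value exceeds $0.7$; your proposal never addresses this tension, and the uniformity claim ``is increasing in $c$'' does not substitute for it because $\lambda$ must itself vary with $c$.
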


\begin{proof}
Let $x=p(n-1)/\log n$. For each $x\in[0.9,\gamma]$, define $\alpha=\alpha(x)$ to be the only solution in $(0,1)$ of
\begin{equation}
\label{eq:alpha}
x(1-\alpha\log(e/\alpha))=0.8.
\end{equation}
%
It is straightforward to verify that $\alpha\in(0,1)$ is well defined and strictly increasing with respect to $x\in[0.9,\gamma]$.
Consider the constant $\hat\eps = 0.1/(\gamma-0.8)$, and define $\hat\alpha=(1+\hat\eps)\alpha$. Recall that both $\alpha$ and $\hat\alpha$ are functions of  $x=p(n-1)/\log n$.
Then, using~\eqref{eq:alpha} and the fact that $\hat\eps \le 0.1/(x-0.8)$, we obtain
\begin{equation}
\label{eq:alphahat}
x(1-\hat\alpha\log(e/\hat\alpha)) > x(1 -\hat\alpha\log(e/\alpha)) = x - (1+\hat\eps)(x-0.8) \ge 0.7.
\end{equation}
{From}~\eqref{eq:alpha} and by Lemma~\ref{lem:min_ranges_easy}~(ii), we can bound
\begin{equation}
\label{eq:deltaalpha}
\pr(\delta>\alpha p(n-1))\le Dn^{-0.19},
\end{equation}
for a constant $D>0$ not depending on $p$. Assume for the rest of the argument that $D$ is sufficiently large.
Let $S$ be the set of vertices of degree at most $\hat\alpha p(n-1)$. By~\eqref{eq:min-atleast-prob} in the proof of Lemma~\ref{lem:min_ranges_easy} and~\eqref{eq:alphahat}, the probability that a vertex $v$ belongs to $S$ is
\begin{equation}
\label{eq:deltahatalpha}
\pr(v\in S)=\pr(d_v\le\hat\alpha p(n-1))\le Dn^{-0.7}.
\end{equation}
We can upper-bound the probability that a pair of vertices $u$ and $v$ are adjacent and belong to $S$, by
\[
p\pr(d_u\le\hat\alpha p(n-1))\pr(d_v\le\hat\alpha p(n-1))=p\paren[\big]{\pr(v\in S)}^2.
\]
Multiplying this by the number of possible pairs and using~\eqref{eq:deltahatalpha}, we get that the probability that $S$ contains some adjacent pair of vertices is at most
\begin{equation}\label{eq:Padj}
\binom{n}{2}p\paren[\big]{\pr(v\in S)}^2 \le D\gamma n^{-0.4} \log n.
\end{equation}
By a similar argument, the probability that $S$ contains a pair of vertices with a common neighbour is at most
\begin{equation}\label{eq:Pcomm}
\binom{n}{2}(n-2)p^2\paren[\big]{\pr(v\in S)}^2 \le D\gamma^2 n^{-0.4} \log^2 n.
\end{equation}
Finally, we define $\eps = \alpha(0.9) \hat\eps/2$. Recall that $\alpha$ is increasing in $[0.9,\gamma]$, and then
$\eps \le \hat \alpha \hat\eps/2$. It follows from Lemma~\ref{lem:avgdeg} that $\d=\d(\sG(n,p))$ is at most $2p(n-1)$
with probability at least $1-D/n$, assuming that $D$ is large enough.
If this event and the one in~\eqref{eq:deltaalpha} hold together, then
\[
\delta + \epsilon\d \le (\alpha+ 2\epsilon) p(n-1) \le \hat\alpha p(n-1),
\]
and therefore all $\eps$-light vertices are contained in $S$. Putting everything together, the statement holds with probability at least
$1 - n^{-C}$, for some small enough constant $C>0$.
\end{proof}
We include an extension of the previous lemma in terms of two random graphs $G_1\sim \sG(n,p_1)$ and $G_2\sim\sG(n,p_2)$, with
$0\le p_1\le p_2<1$, which are coupled together so that $G_1\subseteq G_2$.
This standard coupling can be achieved  in the following way. Let $G_1$ distributed as $\sG(n,p_1)$ and let $G_2$ the supergraph of $G_1$ obtained by adding each edge not in $G_1$ independently with probability $(p_2-p_1)/(1-p_1)$. Then $G_1\subseteq G_2$ and $G_2$ has the same distribution as $\sG(n,p_2)$ (for more details, we refer readers to Section~1.1 in~\cite{JLR}).
The following lemma will be used in Section~\ref{sec:process}, and can be proved in the exact same way as Lemma~\ref{lem:small}, but replacing $p$ by $p'$ in~\eqref{eq:Padj} and~\eqref{eq:Pcomm}.
\begin{lem}\label{lem2:small}
  Suppose $0.9\log n/(n-1) \le p \le p'\le\gamma\log n/(n-1)$ for some
  constant $\gamma\ge0.9$. Let $G_1\subseteq G_2$ where $G_1\sim \sG(n,p)$ and $G_2\sim \sG(n,p')$. Then there exist constants $\epsilon>0$ and $C>0$
  such that the following holds in $\sG(n,p)$ and $\sG(n,p')$ with probability at
  least $1-n^{-C}$. Let $S$ be the set of $\eps$-light
  vertices in $G_1$. Then in $G_2$, there is no edge induced by $S$, and no two vertices in $S$ adjacent to a common vertex.
\end{lem}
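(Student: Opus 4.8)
The plan is to mimic the proof of Lemma~\ref{lem:small} almost verbatim, keeping track of the fact that ``light'' now refers to $G_1\sim\sG(n,p)$ while adjacency refers to the larger graph $G_2\sim\sG(n,p')$. First I would set $x=p(n-1)/\log n$ and define $\alpha=\alpha(x)$ and $\hat\alpha=(1+\hat\eps)\alpha$ exactly as there, using $0.9\le x\le\gamma$, so that $\delta(G_1)\le\alpha p(n-1)$ and, for any fixed $v$, $\pr(d_v^{G_1}\le\hat\alpha p(n-1))\le Dn^{-0.7}$ by Lemma~\ref{lem:min_ranges_easy}~(ii) and~\eqref{eq:min-atleast-prob}. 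As in Lemma~\ref{lem:small}, pick $\eps$ small enough so that, on the likely event $\d(G_1)\le 2p(n-1)$, every $\eps$-light vertex of $G_1$ has $G_1$-degree at most $\hat\alpha p(n-1)$, hence lies in the set $S'$ of vertices whose degree in $G_1$ is at most $\hat\alpha p(n-1)$. Since $S\subseteq S'$, it suffices to show that a.a.s.\ in $G_2$ there is no edge induced by $S'$ and no two vertices of $S'$ with a common $G_2$-neighbour.

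The key point, and the only place the argument genuinely differs, is the independence structure used for the union bounds. For a fixed pair $u,v$, the event ``$u,v\in S'$'' depends only on edges of $G_1$ incident to $\{u,v\}$, while the event ``$uv\in E(G_2)$'' depends on the edge $uv$ in $G_2$; in the standard coupling described before the lemma, the status of the edge $uv$ in $G_2$ and the $G_1$-degrees of $u$ and $v$ restricted to $V\setminus\{u,v\}$ are jointly independent (one may bound $\pr(d_v^{G_1}\le\hat\alpha p(n-1))$ by $\pr(d_v^{G_1}$ restricted to $V\setminus\{u,v\}$ is at most $\hat\alpha p(n-1))$, losing nothing asymptotically, exactly as in Lemma~\ref{lem:secondm}). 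Hence $\pr(u,v\in S',\ uv\in E(G_2))\le p'\,(\pr(v\in S'))^2$, and similarly for a common neighbour $w$ one gets a bound $\le (p')^2(\pr(v\in S'))^2$ after noting that the relevant $G_1$-degrees can be taken over $V\setminus\{u,v,w\}$ and the two $G_2$-edges $uw,vw$ are independent of them. This is precisely the substitution of $p$ by $p'$ in~\eqref{eq:Padj} and~\eqref{eq:Pcomm}.

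Plugging in, the probability that $S'$ contains an adjacent pair in $G_2$ is at most $\binom{n}{2}p'(\pr(v\in S'))^2\le D\gamma n^{-0.4}\log n$, and the probability that $S'$ contains two vertices with a common $G_2$-neighbour is at most $\binom{n}{2}(n-2)(p')^2(\pr(v\in S'))^2\le D\gamma^2 n^{-0.4}\log^2 n$, since $p'\le\gamma\log n/(n-1)$. Together with the events $\delta(G_1)\le\alpha p(n-1)$ (failing with probability $\le Dn^{-0.19}$) and $\d(G_1)\le 2p(n-1)$ (failing with probability $\le D/n$), a union bound shows all the desired properties hold with probability at least $1-n^{-C}$ for a sufficiently small constant $C>0$. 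I do not anticipate a real obstacle here: the main (mild) subtlety is to state the independence between the coupling edges of $G_2$ and the truncated $G_1$-degrees cleanly, so that the union-bound computation of Lemma~\ref{lem:small} carries over with $p'$ in place of $p$ in the edge-probability factors while the degree-tail factors stay governed by $p$.
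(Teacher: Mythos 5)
Your proposal is correct and matches the paper's intended argument: the paper itself proves Lemma~\ref{lem2:small} by the remark that one repeats the proof of Lemma~\ref{lem:small} verbatim, replacing $p$ by $p'$ in~\eqref{eq:Padj} and~\eqref{eq:Pcomm}, while the degree-tail estimates for the light set remain governed by $p$. Your extra care about the independence of the truncated $G_1$-degrees from the coupled $G_2$-edges is a welcome clarification of a point the paper glosses over, but it does not change the route.
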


\subsection{Graph expansion}

For any sets $S,S'\subseteq [n]$, let $E(S,S')$ be the set of edges
in $G$ with one end in $S$ and the other in $S'$.

\begin{lem} \label{lem:expansion_large} Let $f\ge0$ be any function of
  $n$ such that $f\to\infty$, and $\zeta>0$ any fixed constant. Then,
  there exists a constant $C>0$ such that for every $f/n\le p\le 1$
  the following holds in $\sG(n,p)$ with probability at least
  $1-e^{-Cpn^2}$.  For every disjoint sets $S,S'\subseteq[n]$ with
  $\size{S},\size{S'}\ge \zeta n$ we have $|E(S,S')| \geq (\d/4)
  |S||S'|/n$.
\end{lem}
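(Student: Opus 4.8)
The plan is to take a union bound over pairs of vertex sets combined with Chernoff's inequality, after first replacing the random threshold $(\d/4)|S||S'|/n$ by a deterministic multiple of $p|S||S'|$. We may assume $\zeta\le 1/2$, since otherwise no two disjoint sets of size at least $\zeta n$ exist and the statement is vacuous. First I would invoke Lemma~\ref{lem:avgdeg} with $\tau=1/2$: the event $\cA=\{\d\le\tfrac{3}{2} pn\}$ fails with probability at most $2\exp(-pn^2/48)$, and on $\cA$ we have $(\d/4)|S||S'|/n\le(3/8)\,p|S||S'|$ for every pair $S,S'$. Hence it suffices to bound the probability that some disjoint pair $S,S'$ with $|S|,|S'|\ge\zeta n$ has $|E(S,S')|<(3/8)\,p|S||S'|$.

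For a fixed such pair, say with $|S|=s$ and $|S'|=s'$, disjointness of $S$ and $S'$ makes $|E(S,S')|$ a sum of $ss'$ independent $\bin(1,p)$ variables, one per potential edge between $S$ and $S'$; so $|E(S,S')|\sim\bin(ss',p)$ with mean $p\,ss'$. The target $(3/8)\,p\,ss'$ lies a constant factor below the mean, so the lower-tail bound of Theorem~\ref{thm:chernoff} with $\tau=5/8$ gives
\[
\pr\!\left(|E(S,S')|<\tfrac{3}{8}\, p\,ss'\right)\le\exp\!\left(-\tfrac{(5/8)^2}{2}\,p\,ss'\right)\le\exp\!\left(-c\,\zeta^2 p n^2\right)
\]
for an absolute constant $c>0$, using $ss'\ge\zeta^2 n^2$. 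Since an ordered pair of disjoint subsets of $[n]$ is specified by assigning each vertex to $S$, to $S'$, or to neither, there are at most $3^n$ of them, and a union bound bounds the probability that some bad pair exists by $3^n\exp(-c\zeta^2 pn^2)=\exp(n\log 3-c\zeta^2 pn^2)$.

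It remains to combine the two estimates. Here I would use that $p\ge f/n$ with $f\to\infty$, so $pn^2\ge fn\to\infty$; consequently, for all sufficiently large $n$ and all $p$ in the range, $n\log 3\le\tfrac{1}{2} c\zeta^2 pn^2$, and the constant prefactors are absorbed in the same way. This yields a total failure probability of at most $e^{-Cpn^2}$ with $C>0$ depending only on $\zeta$, as required. I expect the only mildly delicate point to be this first reduction: instead of trying to control $\d$ and all the $|E(S,S')|$ jointly, we condition on $\cA$ to turn the random threshold into a deterministic one; after that the argument is a routine first-moment computation that, crucially, beats the $3^n$ from the union bound precisely because $pn\to\infty$.
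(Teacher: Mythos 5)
Your proposal is correct and follows essentially the same route as the paper: both first use Lemma~\ref{lem:avgdeg} to replace the random threshold $(\d/4)|S||S'|/n$ by a deterministic constant multiple of $p|S||S'|$ (the paper takes $\tau=1/4$, you take $\tau=1/2$), then apply the Chernoff lower tail to $|E(S,S')|\sim\bin(|S||S'|,p)$ and a union bound over pairs of sets (the paper crudely bounds the count by $n^2 4^n$, you by $3^n$), finally absorbing the exponential count using $pn\ge f\to\infty$. No gaps.
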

\begin{proof}
  The variable $|E(S,S')|$ has distribution $\bin(|S||S'|,p)$.  By
  Lemma~\ref{lem:avgdeg} with $\tau = 1/4$, we have that
  \begin{equation*}
  \pr\paren[\Big]{\d \geq \frac{5}{4}pn}\leq
  2\exp(-Apn^2)
  \end{equation*}
  where $A=1/12$ and $n\geq 2$. By Chernoff's bound in Theorem~\ref{thm:chernoff}, for a positive
  constant $B$,
  $$
  \pr\paren[\Big]{\bin\paren[\Big]{|S||S'|,p}< \frac{5p}{16} |S||S'|}
  \leq \exp\left(-Bp |S||S'|\right).
  $$
  Hence, the probability that there exist such $S$ and $S'$ is at most
  \begin{eqnarray*}
    \begin{split}
      &2\exp\paren[\Big]{-Apn^2}
      +
      \sum_{s,s'>\zeta n}
      \binom{n}{s}\binom{n}{s'}
      \exp\left(-B pss'\right)\le \exp\paren[\Big]{-Apn^2}
      +
      n^2\cdot 2^n\cdot 2^n
            \exp\left(-B \zeta^2pn^2\right)
      \\
      &\leq
      2\exp\paren[\Big]{-Apn^2}
      +
      \exp\left(
        B''n
        -B \zeta^2p n^2\right).
       \end{split}
    \end{eqnarray*}
    for a positive constant $B''$ and we are done since $pn\geq
    f\to\infty$ as $n\to\infty$.
\end{proof}

\begin{lem}
  \label{lem:sets}
  Let $f\ge0$ be any function of $n$ such that $f\to\infty$, and let
  $\alpha>0$ be any fixed constant. Then, there exist constants
  $\zeta>0$ and $C>0$ such that for every $f/n\le p\le 1$ the
  following holds in $\sG(n,p)$ with probability at least
  $1-Ce^{-(pn)^2}$.  For all $s\le\zeta n$ and every set $S$ of
  size~$s$, we have that $|E[S]| \leq \alpha p ns$.
\end{lem}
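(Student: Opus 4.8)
The plan is to use a first-moment (union-bound) argument over all sets $S$ with $|S|=s$ for every $s$ in the range $1\le s\le\zeta n$, where the constant $\zeta>0$ is to be chosen small (depending on $\alpha$) at the end. For a fixed $S$ of size $s$, the random variable $|E[S]|$ has distribution $\bin\bigl(\binom{s}{2},p\bigr)$, with mean $\binom{s}{2}p\le ps^2/2$. We want to bound $\pr(|E[S]|\ge \alpha pns)$. Since $s\le\zeta n$, the target $\alpha pns$ is at least $(\alpha/\zeta)$ times the mean (up to the factor $s/(s-1)\le 2$), so for $\zeta$ small this is a large deviation well above the mean. I would apply the upper-tail Chernoff bound from Theorem~\ref{thm:chernoff}; more precisely, the standard bound $\pr(X\ge a\mu)\le (e/a)^{a\mu}$ valid for $a>1$ is the cleanest form here, giving something like $\pr(|E[S]|\ge \alpha pns)\le \bigl(e\zeta/\alpha\bigr)^{\,c\alpha pns}$ for an absolute constant $c$ (coming from comparing $\alpha pns$ with the mean $\binom{s}{2}p$). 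Alternatively one can invoke Theorem~\ref{thm:chernoff} with $\tau$ chosen so that $(1+\tau)\mu=\alpha pns$; I will not grind through the exact constants here.

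Next I would take the union bound over the $\binom{n}{s}\le (en/s)^s\le e^{s\log(en/s)}$ choices of $S$, and then sum over $s$ from $1$ to $\zeta n$. The key point is that the per-set failure probability decays like $\exp(-\Omega(\alpha pns\log(\alpha/(e\zeta))))$, whose exponent, after choosing $\zeta$ small enough that $\log(\alpha/(e\zeta))$ is a large positive constant, dominates the entropy term $s\log(en/s)$, because $pn\to\infty$ (indeed $pn\ge f\to\infty$) and $\log(en/s)\le\log(e/\zeta)=O(1)$ for $s$ in our range. So term by term the summand is at most $\exp(-\Omega(pns))$, and summing the geometric-type series over $s\ge1$ gives a total of $O\bigl(\exp(-\Omega(pn))\bigr)$. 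To get the stated bound $Ce^{-(pn)^2}$ one needs the exponent to beat $(pn)^2$ rather than just $pn$; this is where I would be slightly more careful: for $s=1$ there are no induced edges so that term is vacuous, and for $s\ge 2$ we have $s\ge 2$ so the exponent is $\Omega(pn\cdot s)$ — but to reach $(pn)^2$ we should instead note that when $p$ is not too small the relevant scale is different, or more simply observe that $|E[S]|$ for the \emph{largest} sets dominates and $pn\cdot\zeta n=\zeta p n^2\gg (pn)^2$ when $pn=o(n)$, while if $pn=\Theta(n)$ we can bound $(pn)^2\le pn^2$ up to constants. In other words, the worst case of the union bound is essentially $s=\Theta(n)$, contributing $\exp(-\Omega(pn^2))$, which is at most $\exp(-(pn)^2)$ up to adjusting $C$; the small-$s$ terms are even smaller and are absorbed.

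Concretely, the steps in order: (1) fix $s$ and $S$, identify $|E[S]|\sim\bin(\binom s2,p)$; (2) apply the Chernoff upper-tail bound to get $\pr(|E[S]|\ge\alpha pns)\le\exp(-c_\alpha\, pns)$ for a constant $c_\alpha$ that grows like $\log(1/\zeta)$; (3) union bound over $S$: multiply by $\binom ns\le\exp(s\log(en/s))$ and note $\log(en/s)=O(\log(1/\zeta))$ is a fixed constant on $1\le s\le\zeta n$, so choosing $\zeta$ small makes $c_\alpha$ beat it, leaving $\le\exp(-c'_\alpha pns)$; (4) sum over $1\le s\le\zeta n$, dominated by the $s=\lceil\zeta n\rceil$ end, giving total $\le C\exp(-c''_\alpha p n^2)$; (5) since $p\le1$ implies $(pn)^2\le pn^2$, conclude $\le Ce^{-(pn)^2}$ after renaming constants (absorbing the factor $c''_\alpha$ into $\zeta$ or $C$ as needed, possibly shrinking the exponent's constant to exactly $1$ at the cost of enlarging $C$, which is harmless).

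The main obstacle I anticipate is purely bookkeeping: making sure the constant in the exponent of the Chernoff bound, after subtracting the entropy term $s\log(en/s)$ from the union bound, is still positive and in fact large enough to yield the clean target exponent $(pn)^2$ uniformly over the whole range $f/n\le p\le 1$ and over all $1\le s\le\zeta n$. This is handled by choosing $\zeta$ small \emph{after} fixing $\alpha$ (so that $\log(\alpha/(e\zeta))$ is as large as we like), and by using $p\le 1$ to convert $pn^2$ into an upper bound for $(pn)^2$; there is no real difficulty, just care with the order of quantifiers on the constants.
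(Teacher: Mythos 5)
Your overall strategy (first moment over all sets of each size $s\le\zeta n$, binomial upper tail per set, $\zeta$ chosen small after $\alpha$) is the same as the paper's, but two of your intermediate claims are false and the proof does not close as written. First, in your step (3) the assertion that $\log(en/s)=O(\log(1/\zeta))$ uniformly over $1\le s\le\zeta n$ is wrong: for $s=n^{o(1)}$ one has $\log(en/s)=\Theta(\log n)$, and since the lemma must hold for all $pn\ge f$ with $f\to\infty$ arbitrarily slowly (e.g.\ $pn=\log\log n$), no choice of $\zeta$ makes $c_\alpha pn$ beat $\log(en/s)$ in that regime. The reason the union bound nevertheless works is that you must \emph{not} weaken the per-set bound $\bigl(\tfrac{es}{2\alpha n}\bigr)^{\lceil\alpha pns\rceil}$ to $\bigl(\tfrac{e\zeta}{2\alpha}\bigr)^{\alpha pns}$ before multiplying by $\binom{n}{s}\le(en/s)^s$: one factor of $\tfrac{es}{2\alpha n}$ cancels the entropy factor $\tfrac{en}{s}$ exactly (their product is the constant $\tfrac{e^2}{2\alpha}$), and only the remaining $\lceil\alpha pns\rceil-1$ factors get bounded by $\tfrac{e\zeta}{2\alpha}$. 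By discarding the $s/n$ dependence too early you lose precisely this cancellation.

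Second, your step (4) has the direction of domination backwards: a sum of terms of the form $e^{-c'pns}$ over $s$ is dominated by the \emph{smallest} admissible $s$, not by $s=\lceil\zeta n\rceil$, so summing from $s=2$ would only yield $e^{-\Omega(pn)}$, which is far weaker than the claimed $e^{-(pn)^2}$ when $pn\to\infty$. The missing observation — which you touch on only for $s=1$ — is that the event $|E[S]|>\alpha pns$ is deterministically impossible whenever $\binom{s}{2}\le\alpha pns$, i.e.\ for all $s\le 2\alpha pn+1$, since $|E[S]|\le s^2/2$. Hence the union bound need only range over $s\ge 2\alpha pn$, and its leading (smallest-$s$) term is $\bigl(e^{-\Omega(pn)}\bigr)^{2\alpha pn}=e^{-\Omega((pn)^2)}$, which is where the exponent $(pn)^2$ actually comes from. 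Your attempted route to $(pn)^2$ via "the largest sets dominate" and $p n^2\ge(pn)^2$ rests on the incorrect domination claim and should be replaced by this truncation of the range of $s$.
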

\begin{proof}
 The result is trivial for any set of size $s\leq 2\alpha pn$ since
 $\size{E[S]}\leq s^2/2 \leq s (2\alpha pn)/2 =
 \alpha pn s$. Let
  $\zeta>0$ be small enough so that
  $\frac{e\zeta}{2\alpha}<e^{-1/\alpha^2}$.  The expected number of
sets of size $2\alpha pn\le s\le \zeta n$ containing at least $\alpha pns$ edges is at
most
\begin{equation*}
  \begin{split}
    \binom{n}{s}\binom{\binom{s}{2}}{\ceil{\alpha pns}}p^{\ceil{\alpha
        pns}}
    &\le
    \left(\frac{en}{s} \left(\frac{es}{2\alpha n}\right)^{\ceil{\alpha
          pns}}\right)^s
    =
      \left(\frac{e^2}{2\alpha} \left(\frac{es}{2\alpha
            n}\right)^{\ceil{\alpha pns}-1}\right)^s
      \\
      &\leq
      \left(A\left(\frac{e\zeta}{2\alpha}\right)^{\alpha pn}\right)^s
      < \left(A e^{-pn/\alpha}\right)^s,
  \end{split}
  \end{equation*}
for some constant $A>0$ depending only on $f$ and $\zeta$ (we used the fact that the exponent $\ceil{\alpha pn}-1\ge \alpha f-1$, which eventually becomes positive as $f\to\infty$).

  Summing the expectation above over all $s\geq 2\alpha pn$, we get
\[
\sum_{s\geq 2\alpha pn} \left(A e^{-pn/\alpha}\right)^s \le \left(A e^{-pn/\alpha}\right)^{2\alpha pn} \frac{1}{1-A e^{-f/\alpha}}
\le C e^{-(pn)^2},
\]
for some constant $C>0$ only depending on $f$ and $\zeta$.
  \end{proof}

For any $S\subseteq [n]$,
let $\overline{S}$ denote $[n]\setminus S$.
\begin{lem}
  \label{lem:expansion_easy}
 Let $\gamma>1$ be a fixed constant. There exists a constant $C>0$ such that for any $p=p(n)\ge \gamma\log n/(n-1)$, the
  following holds in $\sG(n,p)$ with probability at least $1-n^{-C}$.  For every $S\subsetneq[n]$ with $2\le |S|\le n-2$,
  $|E(S,\overline{S})| \geq 1.5\delta$.
\end{lem}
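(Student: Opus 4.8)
The plan is to avoid estimating $\pr(|E(S,\overline{S})|<1.5\delta)$ separately for each $S$ and then union bounding: such an approach is far too lossy when $p$ is close to $\log n/(n-1)$, since there $\delta$ is only of order $\log n$ (times a possibly small constant) while the number of candidate sets is enormous. Instead I would control the boundary through the identity
\[
|E(S,\overline{S})| \;=\; \sum_{v\in S}d_v - 2|E[S]| \;\ge\; |S|\,\delta - 2|E[S]|,
\]
combining the minimum-degree lower bound of Lemma~\ref{lem:newMindeg}(iii) with the internal-edge bound of Lemma~\ref{lem:sets} for small sets, and using Lemma~\ref{lem:expansion_large} for sets of linear size. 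Throughout I would use the deterministic inequality $\delta\le\d$ (the minimum degree never exceeds the average degree $2m/n<\d$), and by the symmetry $S\leftrightarrow\overline{S}$ it suffices to treat $|S|\le n/2$.

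Concretely, first invoke Lemma~\ref{lem:newMindeg}(iii) to fix constants $\eps,C_1>0$ (depending only on $\gamma$) with $\delta\ge \eps p(n-1)$ holding with probability at least $1-n^{-C_1}$; since $p(n-1)\ge\gamma\log n$, this also forces $\delta\to\infty$, so $\delta\ge4$ for large $n$. Next apply Lemma~\ref{lem:sets} with $f=\gamma\log n$ and $\alpha=\eps/8$ to get constants $\zeta\in(0,1/2]$ and $C_2>0$ such that, with probability at least $1-C_2e^{-(pn)^2}$ (which is at least $1-n^{-C_1}$ for large $n$), every $S$ with $|S|=s\le\zeta n$ satisfies $|E[S]|\le\alpha pns$. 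On the intersection of these events: for $s=2$ one gets directly $|E(S,\overline{S})|=d_u+d_v-2|E[S]|\ge 2\delta-2\ge1.5\delta$; and for $3\le s\le\zeta n$, using $pn\le 2p(n-1)\le 2\delta/\eps$ one gets $2|E[S]|\le 2\alpha pns\le(\delta/2)s\le\delta(s-1.5)$, where the last step uses $s\ge3$, so $|E(S,\overline{S})|\ge s\delta-\delta(s-1.5)=1.5\delta$.

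It remains to handle $\zeta n\le|S|\le n/2$ and, by symmetry, $n/2\le|S|\le(1-\zeta)n$. Apply Lemma~\ref{lem:expansion_large} with the same $\zeta$ and $f=\gamma\log n$: with probability at least $1-e^{-C_3pn^2}\ge1-n^{-C_1}$ for large $n$, every $S$ with $\zeta n\le|S|\le(1-\zeta)n$ satisfies
\[
|E(S,\overline{S})|\ge\frac{\d}{4}\cdot\frac{|S|(n-|S|)}{n}\ge\frac{\d}{4}\,\zeta(1-\zeta)n\ge\frac{\zeta n}{8}\,\d,
\]
which exceeds $1.5\d\ge1.5\delta$ for $n$ large. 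A union bound over the three failure events (each of probability $O(n^{-C_1})$, the other two in fact much smaller) then gives the lemma with any constant $C$ slightly smaller than $C_1$.

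The only genuinely delicate point is the observation made at the start: a per-set Chernoff bound on $|E(S,\overline{S})|$ cannot be afforded near the connectivity threshold, so the argument must instead route through $\sum_{v\in S}d_v\ge|S|\delta$ together with Lemma~\ref{lem:sets}. Once that is set up, the three ranges of $|S|$ patch together cleanly and every remaining estimate is routine.
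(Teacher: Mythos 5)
Your proof is correct and follows essentially the same route as the paper's: Lemma~\ref{lem:newMindeg}(iii) gives $\delta=\Omega(p n)$, Lemma~\ref{lem:sets} controls $|E[S]|$ for sets of size at most $\zeta n$ so that $|E(S,\overline{S})|\ge |S|\delta-2|E[S]|\ge 1.5\delta$, and Lemma~\ref{lem:expansion_large} handles linear-sized sets. The only (harmless) cosmetic differences are your separate treatment of $|S|=2$ and your use of the deterministic bound $\delta\le\d$ in place of the paper's appeal to Lemma~\ref{lem:avgdeg} for the large-set case.
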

\begin{proof} Without loss of generality, we may assume that $|S|\le |\overline{S}|$. Since $p\ge\gamma\log n/(n-1)$ for some $\gamma>1$, by Lemma~\ref{lem:newMindeg} (iii), there exist constants $\eps>0$ and $C_1>0$ such that with probability at least $1-n^{-C_1}$, $\delta=\delta(\sG(n,p))\ge \eps pn$. Let $\alpha=\eps/8$.
Then by Lemma~\ref{lem:sets}, there exist constants $\zeta>0$ and $C_2>0$ such that with probability at least $1-n^{-C_2}$, for all sets $S$ with size at most $\zeta n$, $|E(S,S)|\le \alpha pn|S|$.  Then, with probability at least $1-n^{-C_1}-n^{-C_2}$, for all these $S$, $|E(S,\overline{S})|\ge \delta|S|-2\alpha pn|S|\ge (3/4)\delta|S|\ge 1.5 \delta$, as $|S|\ge 2$. Now by Lemmas~\ref{lem:expansion_large} and~\ref{lem:avgdeg}, there exists another constant $C_3>0$ such that with probability at least $1-n^{-C_3}$, for all sets $S$ with size at least $\zeta n$, $|E(S,\overline{S})|\ge (\d/4)\zeta^2 n$, where $\d=\d(\sG(n,p))\sim np$. Clearly, $(\d/4)\zeta^2 n\ge 1.5\delta$ with probability at least $1-n^{-C_4}$ for some $C_4>0$. The lemma follows by choosing $C<\min\{C_i:\ 1\le i\le 4\}$.
\end{proof}
%
%
%
%
\section{Proof of Theorem~\ref {thm:main}}
\label{sec:main}

We proceed to prove Theorem~\ref {thm:main}, as a consequence of
Propositions~\ref{prop:a} and~\ref{prop:b}. For the rest of the argument, let
$\delta\eqdef \delta(\sG(n,p))$ and let $\d\eqdef \d(\sG(n,p))$. We
split the argument into cases depending on the range of $p$.

First observe that by Lemma~\ref{lem:newMindeg} (i) we can assume that $p\ge0.9\log
n/(n-1)$, since for $p\le0.9\log n/(n-1)$ the random graph $\sG(n,p)$ is a.a.s.\
disconnected and has minimum degree zero, so the statement of
Theorem~\ref {thm:main} holds trivially.

Let $\gamma_2$ be a large enough constant so that for
$p\ge\gamma_2\log n/(n-1)$ we have $\delta> (3/4)\d$ \aas\
(see Lemma~\ref{lem:newMindeg} (iv) and Lemma~\ref{lem:avgdeg}).  Let
$\eps<1/2$ be the constant given by Lemma~\ref{lem:small} with
$\gamma=\gamma_2$.  Let $\gamma_1 \in(1,\gamma_2)$ be the constant given
by Lemma~\ref{lem:newMindeg} (ii) with $\eps/4$.

For $0.9\log n/(n-1) \le p\le\gamma_1\log n/(n-1)$, we only need to
show that $\sG(n,p)$ a.a.s.\ satisfies the hypothesis of
Proposition~\ref{prop:a}. First, we note from Lemma~\ref{lem:avgdeg} that
$\d\sim pn\to\infty$. Condition (a) holds by our choice of
$\gamma_1$. Condition (b) follows from Lemma~\ref{lem:sets} with any
$\alpha<\epsilon/4$, since $\d\sim pn$.
Fix $\zeta$ as given by that lemma.
 Condition (c) with $\eta=\zeta^2/4$ is a consequence
of Lemma~\ref{lem:expansion_large}.

Finally, we show that $\sG(n,p)$ a.a.s.\ satisfies the conditions in
Proposition~\ref{prop:b} for the range $p\ge\gamma_1\log n/(n-1)$. First note
that $\delta = \Omega(\d)$ by Lemma~\ref{lem:newMindeg} (iii). Condition
(a') is satisfied for $p\ge\gamma_2\log n/(n-1)$, since \aas\
$\delta>\frac{(1+\epsilon)\d}{2}$ (by our choice of $\gamma_2$ and since $\eps<1/2$); and it is also satisfied for $\gamma_1\log
n/(n-1)\le p\le\gamma_2\log n/(n-1)$, since \aas\ no $\eps$-light vertices
are adjacent nor have a common neighbours (by our choice of
$\eps$).
For condition (e'), note that
$\epsilon t/\d$ is bounded away from $0$ since $\delta =
\Omega(\d)$. Therefore, the condition follows from Lemma~\ref{lem:sets}
with $\alpha = \epsilon t/(8\d)$ (also using that a.a.s.\ $\d\ge pn/2$), and this determines our choice of $\zeta$.
  Condition (b') holds \aas\ by
Lemma~\ref{lem:deg}. Condition (c') holds \aas\ by
Lemma~\ref{lem:expansion_large}. Condition (d') holds \aas\ by
Lemma~\ref{lem:expansion_easy}.  \qed

\section{Proof of Theorem~\ref {thm:cases}}
\label{sec:cases}

The number of edges in
$G\sim \sG(n,p)$ is a binomial random variable distributed as
$\bin(\binom{n}{2},p)$. If $p<0.9\log n/n$, then by Lemma~\ref{lem:newMindeg} (i), a.a.s.\ $\delta(G)=0$ and thus a.a.s.\ $\delta(G)\le \d(G)/2$. Assume $p\ge 0.9\log n/n$. By Lemma~\ref{lem:avgdeg}, a.a.s.\ $\abs{\d/2-pn/2}\leq \omega_n \sqrt{p}$, where $\d = \d(G)$. By Lemma~\ref{lem:newMindeg} (iv), there is a constant $\gamma>0$, such that for all $p\ge \gamma\log n/n$, a.a.s.\ $\delta(G)\ge (3/4)pn$. Hence, for $p$ in this range, a.a.s.\ $\delta(G)>\d/2$. Now we only consider $0.9\log n/n\le p\le \gamma\log n/n$.

Let $\w_n$ be a positive-valued function of $n$ that goes to infinity
arbitrarily slowly as $n\to\infty$, and let $\eps=\w_n/\sqrt{p}n$. 
Define $f=f(n)$ by $p=\frac{\beta\log n+f}{n-1}$. To prove statement~(ii), we assume $f\ge -\beta\log\log n/2+\w_n$.
 By Lemma~\ref{lem:min_ranges} (with $h=f$), we have that
\begin{equation}
  \pr\paren[\Big]{\delta \le \frac{1}{2}(1+\eps)pn}
    =
    O\paren[\Bigg]{\exp\paren[\bigg]{
        -\frac{f}{\beta}-\frac{1}{2}\log\log n
        +O(\eps\log n)
        }}
    =o(1),
  \end{equation}
  as $ -f/\beta -\log\log n/2 \le - w_n$, wheras $\eps\log n=O(\omega_n\sqrt{p})=o(1)$. Moreover, by Lemma~\ref{lem:avgdeg}, $\pr(\d/2\ge
  (1+\eps)pn/2)=o(1)$. Thus, a.a.s.\ $\delta > \d/2$ and so
  $T(G)=\floor{\d/2}$ by Theorem~\ref{thm:main}. This completes the proof of statement~(ii).
 On the other hand, if
  $f\le -\beta\log\log n/2 -\w_n$, then $f/\beta+\log\log n/2 \le -\w_n$, and thus by Lemma~\ref{lem:min_ranges} (with $h=f$, and $\eps$ replaced by $-\eps$),
\begin{equation}
  \pr\paren[\Big]{\delta > \frac{1}{2}(1-\eps)pn}
  =
  O\paren[\Bigg]{\frac{\log n}{n}+
    \exp\paren[\bigg]{
        \frac{f}{\beta}+\frac{1}{2}\log\log n
        +O(\eps\log n)
      }}
=o(1).
\end{equation}
Again by Lemma~\ref{lem:avgdeg},
$\pr(\d/2\le (1-\eps)pn/2)=o(1)$. Thus, a.a.s.\ $\delta<
\d/2$ and thus $T(G)=\delta(G)$ by Theorem~\ref{thm:main}, which completes the proof of statement~(i).
\qed

\section{Proof of Theorem~\ref{thm:process}}
\label{sec:process}

A standard tool to investigate the random graph process $G_0,\ldots,
G_m,\ldots, G_{\binom{n}{2}}$ is the related continuous random graph
process $(\sG_p)_{p\in[0,1]}$ defined as follows. For each edge $e$ of
the complete graph with vertex set $[n]$, we associate a random
variable $P_e$ uniformly distributed in $[0,1]$ and independent from
all others. Then, for any $p\in[0,1]$, we define $\sG_p$ to be the
graph with vertex set $[n]$ and precisely those edges $e$ such that
$p\ge P_e$. Note that for each $p$, $\sG_p$ is distributed as
$\sG(n,p)$. This provides us with a useful way of coupling together
$\sG(n,p)$ for several values of $p$, since $p\le p'$ implies $\sG_p
\subseteq \sG_{p'}$. Moreover, let $p(m)=\min\set{p\in[0,1]:\sG_p
  \text{ has at least $m$ edges}}$. Then, $\sG_{p(0)},\ldots,
\sG_{p(m)},\ldots, \sG_{p(\binom{n}{2})}$ is distributed as
$G_0,\ldots, G_m,\ldots, G_{\binom{n}{2}}$, since all $P_e$ are
different with probability $1$. For more details on the connection
between $(\sG_p)_{p\in[0,1]}$ and $(G_m)_{0\le m \le \binom{n}{2}}$
and further properties, we refer the reader to~\cite{JLR}.

In this article, we prove several statements that hold a.a.s.\
simultaneously for all $m$ in the random graph process $(G_m)_{0\le
  m\le\binom{n}{2}}$. To do so, it is often convenient to use small
bits of the continuous random graph process as follows. Given $p_0$
and $p_1$ as functions of $n$ such that $0\le p_0\le p_1\le 1$, we
consider $(\sG_p)_{p_0\le p\le p_1}$. Let $m_0=m(\sG_{p_0})$ and
$m_1=m(\sG_{p_1})$.  (Note that $m_0$ and $m_1$ are random variables
with $m_0\le m_1$, since $\sG_{p_0}\subset\sG_{p_1}$.) We colour all
edges of $\sG_{p_0}=G_{m_0}$ red and the remaining $m_1-m_0$ edges in
$\sG_{p_1}\setminus \sG_{p_0}$ blue. Then we can interpret
$G_{m_0},G_{m_0+1},\ldots,G_{m_1}$ as a random graph process in which
we sequentially add blue edges to $G_{m_0}$, so that each $G_m$ has
the $m_0$ red edges of $G_{m_0}$ together with the first $m-m_0$ blue
edges we add in the process. This interpretation will be used many
times throughout the argument.

We first prove the following result, which is stronger than part~(ii) of Theorem~\ref{thm:process}, and is also used in the argument for part~(i).
%
%
\begin{thm}\label{thm:continuousCases}
Consider the random graph process $(\sG_{p})_{0\le p\le 1}$. We have that a.a.s.
\begin{itemize}
\item[(i)] for all $p\le \frac{\beta (\log n-\log\log n/2)-\omega(1)}{n-1}$, we have $\delta(\sG_p)\le \d(\sG_p)/2$; and
\item[(ii)] for all $p\ge \frac{\beta (\log n-\log\log n/2)+\omega(1)}{n-1}$, we have $\delta(\sG_p)> \d(\sG_p)/2$.
\end{itemize}
Moreover, for every constant $0<\theta<1$, there is a constant $\rho>0$ such that a.a.s.
\begin{itemize}
\item[(iii)] for all $\rho \log n/(n-1)\le p\le 1$, we have $\delta(\sG_p) > \theta\d(\sG_p)$.
\end{itemize}
\end{thm}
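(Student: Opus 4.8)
The plan is to exploit the monotone coupling $\sG_p\subseteq\sG_{p'}$ for $p\le p'$, under which $\delta(\sG_p)$ and $\d(\sG_p)=2m(\sG_p)/(n-1)$ are both non-decreasing in $p$. To lift the pointwise estimates of Section~\ref{sec:propGnp} to statements holding a.a.s.\ \emph{simultaneously} over a range $[p_{\mathrm{lo}},p_{\mathrm{hi}}]$, I would fix a finite grid $p_{\mathrm{lo}}=q_0<q_1<\dots<q_N=p_{\mathrm{hi}}$, control $\delta$ and $m$ at each $q_j$ with failure probabilities summing to $o(1)$, and interpolate: for $p\in[q_j,q_{j+1}]$, $\delta(\sG_{q_j})\le\delta(\sG_p)\le\delta(\sG_{q_{j+1}})$ and similarly for $\d$. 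I would prove (iii) first, as it (or a variant) also serves in (ii). Fix $\theta\in(0,1)$, pick a constant $\alpha\in(\theta,1)$, set $\kappa(\alpha):=1-\alpha\log(e/\alpha)>0$, choose $\rho>\max\{1/\kappa(\alpha),0.9\}$, a small $\eta>0$, and a large constant $\gamma$ (from Lemma~\ref{lem:newMindeg}(iv)). Over $[\rho\log n/(n-1),1]$ use the geometric grid $q_j=\rho(1+\eta)^j\log n/(n-1)$ ($O(\log n)$ points). At each $q_j$: Lemma~\ref{lem:avgdeg} gives $\d(\sG_{q_j})\le(1+\eta)q_j n$ off a set of probability $e^{-\Omega(n\log n)}$, and $\delta(\sG_{q_j})>\alpha q_j(n-1)$ off a set of probability $n^{-\Omega(1)}$ --- via Lemma~\ref{lem:min_ranges_easy}(i) when $q_j\le\gamma\log n/(n-1)$ (the exponent $\log n(1-x_j\kappa(\alpha))$ is negative since $x_j:=q_j(n-1)/\log n\ge\rho>1/\kappa(\alpha)$), and via Lemma~\ref{lem:newMindeg}(iv) otherwise. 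Summing over the grid and interpolating, a.a.s.\ $\delta(\sG_p)\ge\alpha q_j(n-1)>\theta(1+\eta)^2 q_j n\ge\theta\,\d(\sG_p)$ on each cell, provided $\eta$ is small enough that $\alpha>\theta(1+\eta)^2$ (possible since $\alpha>\theta$).

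For (ii), I would split $[\tfrac{\beta(\log n-\log\log n/2)+\omega(1)}{n-1},1]$ at $(1+\eta_1)\beta\log n/(n-1)$ for a small fixed $\eta_1>0$. On the upper piece $[(1+\eta_1)\beta\log n/(n-1),1]$, essentially the argument of (iii) applies with $\theta=1/2$: choosing $\alpha>1/2$ close enough to $1/2$ makes $1/\kappa(\alpha)<(1+\eta_1)\beta$, so one may take $\rho=(1+\eta_1)\beta$ there. On the near-critical piece, write $p=\tfrac{\beta\log n+h}{n-1}$, so $h$ runs from $h_-:=-\tfrac\beta2\log\log n+\omega(1)$ to $\Theta(\log n)$, and use the additive grid $h_j=h_-+cj$ ($c$ a small constant, $\Theta(\log n)$ points). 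Apply Lemma~\ref{lem:min_ranges} with a small positive slack $\eps_n:=3c/(\beta\log n)\to0$: then $\delta(\sG_{q_j})>\tfrac{1+\eps_n}{2}q_j(n-1)$ off a set of probability $C\exp(-h_j/\beta-\tfrac12\log\log n+D\eps_n\log n)=C'\exp(-h_j/\beta-\tfrac12\log\log n)$ (as $D\eps_n\log n=O(1)$), while Lemma~\ref{lem:avgdeg} (with $\tau_n=\log^{-2}n$) gives $\d(\sG_{q_{j+1}})\le(1+\tau_n)q_{j+1}n$ off a set of probability $e^{-\Omega(n/\log^3 n)}$. The first family is a geometric series in $h_j$ dominated by its first term $C'\exp(-h_-/\beta-\tfrac12\log\log n)=C'\exp(-\omega(1)/\beta)=o(1)$; the second sums to $o(1)$ trivially. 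A careful but routine computation --- the point being that $\eps_n q_j(n-1)/2=\Theta(c)$ absorbs both the per-cell increase $\d(\sG_{q_{j+1}})-\d(\sG_{q_j})=\Theta(c)$ and the $\Theta(\log n/n)$ discrepancies between $q_j(n-1)$, $q_jn$ and $\d$ --- then gives $\delta(\sG_{q_j})>\d(\sG_{q_{j+1}})/2$, hence $\delta(\sG_p)\ge\delta(\sG_{q_j})>\d(\sG_{q_{j+1}})/2\ge\d(\sG_p)/2$ on $[q_j,q_{j+1}]$.

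Part (i) is the mirror image. For $p\le0.9\log n/(n-1)$ it is trivial: a.a.s.\ $\delta(\sG_{0.9\log n/(n-1)})=0$ by Lemma~\ref{lem:newMindeg}(i), so by monotonicity $\delta(\sG_p)=0\le\d(\sG_p)/2$. I would split the rest, $[0.9\log n/(n-1),\tfrac{\beta(\log n-\log\log n/2)-\omega(1)}{n-1}]$, at $(1-\eta_1)\beta\log n/(n-1)$. On the lower piece $x_j\le(1-\eta_1)\beta$ is bounded away from $\beta$, so $\delta(\sG_{q_j})\le\alpha q_j(n-1)$ off probability $n^{-\Omega(1)}$ by Lemma~\ref{lem:min_ranges_easy}(ii) (for $\alpha<1/2$ close enough to $1/2$ that $x_j\kappa(\alpha)<1$ still), and a geometric grid in $x$ together with $\d(\sG_{q_j})\ge(1-\tau)q_jn$ yields $\delta(\sG_p)\le\d(\sG_p)/2$ with a constant multiplicative margin. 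On the near-critical piece, use an additive grid in $h$ and Lemma~\ref{lem:min_ranges}~\eqref{eq:min-atmost} with a small \emph{negative} slack $\eps_n=-2c/(\beta\log n)$: then $\delta(\sG_{q_{j+1}})\le\tfrac{1+\eps_n}{2}q_{j+1}(n-1)$ off probability $C(\tfrac{\log n}{n}+\exp(h_{j+1}/\beta+\tfrac12\log\log n+D|\eps_n|\log n))$; summed, the $\log n/n$ terms give $O(\log^2n/n)=o(1)$ and the exponentials form a geometric series dominated by $\exp(-\omega(1)/\beta)=o(1)$. With $\d(\sG_{q_j})\ge(1-\tau_n)q_jn$, interpolating (the negative slack now being what forces $\delta(\sG_{q_{j+1}})\le\d(\sG_{q_j})/2$ despite the per-cell increment and the $n/(n-1)$ correction) gives $\delta(\sG_p)\le\delta(\sG_{q_{j+1}})\le\d(\sG_{q_j})/2\le\d(\sG_p)/2$ on $[q_j,q_{j+1}]$.

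The main obstacle, and the step I expect to be most delicate, is the near-critical regime in (i)--(ii): one must simultaneously choose the grid spacing $c$, the degree slack $\eps_n=\Theta(c/\log n)$, and the edge slack $\tau_n$ so that the monotone sandwich closes --- which forces $|\eps_n|q_j(n-1)$ to dominate the $\Theta(c)$ change of $\d$ across a cell and the $\Theta(\log n/n)$ discrepancy terms --- while keeping $D|\eps_n|\log n=O(1)$ so that Lemma~\ref{lem:min_ranges}'s failure bound is not blown up; and, since near the threshold that failure bound is only $\exp(-\omega(1)/\beta)=o(1)$ rather than polynomially small, the grid must be laid out so that these failures decay geometrically along it and are thus dominated by the single endpoint term. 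Outside the near-critical window the argument is a routine combination of monotonicity with the lemmas of Section~\ref{sec:propGnp}.
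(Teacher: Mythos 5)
Your proposal is correct and, for parts (i) and (ii), follows essentially the same route as the paper: discretize $p$ near the threshold, apply Lemma~\ref{lem:min_ranges} with slack $|\eps|=\Theta(1/\log n)$ so that $D|\eps|\log n=O(1)$, arrange the failure probabilities to decay geometrically along the grid so that their sum is dominated by the endpoint term $e^{-\omega(1)/\beta}=o(1)$, and close the sandwich by monotonicity of $\delta$ and $\d$ under the coupling (the paper uses grid spacing $f^2+if$ with $f\to\infty$ slowly where you use constant spacing $c$ --- an immaterial difference). For (iii) the paper avoids a grid in $p$ altogether and instead union-bounds over all $O(n^2)$ values of $m$ in the discrete process, transferring Lemma~\ref{lem:newMindeg}(iv) from $\sG(n,\bar p)$ with $\bar p=m/\binom{n}{2}$ to $G_m$ at the cost of a factor $n^2$ by conditioning on the number of edges; your geometric grid with the $\alpha>\theta(1+\eta)^2$ sandwich achieves the same conclusion.
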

\begin{proof} 
First, we prove statement (iii). We will show that for every $0<\theta<1$, there exists $\rho>0$ such that a.a.s.\
\begin{equation}
\delta(G_m)> \theta \d(G_m),\quad \mbox{ for all}\ m\ge m_0=(\rho/4) n\log n.\label{Gnm}
\end{equation}
 Then, let $p_0=\rho \log n/(n-1)$. By Chernoff's bound in Theorem~\ref{thm:chernoff}, a.a.s.\ $m(\sG(n,p_0)) > m_0$, i.e.\ a.a.s.\ $p(m_0)<p_0$. It follows then that a.a.s.\
$\delta(\sG_p)>\theta \d(\sG_p)$ for all $p\ge p_0$.
Now we prove~(\ref{Gnm}). For each $m$, let $\bar p=m/\binom{n}{2}$. Then
 $$
 \pr\Big(\delta(G_m)\le \theta\cdot 2m/(n-1) \Big)=\pr\Big(\delta(\sG(n,\bar p))\le \theta \bar pn \mid m(\sG(n,\bar p))=m) \Big).
 $$
 By the choice of $\bar p$, $h(i)=\pr(m(\sG(n,\bar p))=i)$ is maximized at $i=m$. Hence, $\pr(m(\sG(n,\bar p))=m)\ge n^{-2}$. Thus,
 $$
 \pr\Big(\delta(G_m)\le \theta\cdot 2m/(n-1) \Big)\le \frac{\pr\Big(\delta(\sG(n,\bar p))\le \theta \bar pn\Big)}{\pr(m(\sG(n,\bar p))=m)}\le n^2\pr\Big(\delta(\sG(n,\bar p))\le \theta \bar pn\Big).
 $$
 By Lemma~\ref{lem:newMindeg} (iv), for every $0<\theta<1$, we can choose $\rho>0$ sufficiently large such that
the probability on the right-hand side above is less than $1/n^5$ for every $m\ge (\rho/4) n\log n$ (correspondingly, $\bar p\ge (\rho/2)\log n/(n-1)$).
Hence, taking a union bound over the $O(n^2)$ possible values of $m$, we deduce that claim~(\ref{Gnm}) is true with probability at least $1-O(n^{-1})$.

Next, we prove statements (i) and (ii).
Let $f=o(\sqrt{\log n})$ be a function that goes to $\infty$ arbitrarily slowly, as $n\to\infty$.
Let $p_i=(\beta(\log n -\log\log n/2) -f^2-if)/(n-1)$ and let $q_i=(\beta(\log n-\log\log n/2) +f^2+if)/(n-1)$, for each $i\ge 1$.
Let $T$ be the smallest integer such that $p_T \le 0.9\log n/(n-1)$ and redefine $p_T=0.9\log n/(n-1)$. Let $\rho$ be the constant satisfying statement (iii) with $\theta=3/4$. Let $T'$ be the smallest integer such that $q_{T'} \ge \rho \log n/(n-1)$ and redefine $q_{T'}=\rho \log n/(n-1)$. Obviously, $T,T'=O(\log n)$.
\begin{claim}\label{claim:couple} There exists a positive constant
  $C$, such that, for every $1\le i< T$,
  $$
  \pr\Big(\delta(\sG_{p_i})>\d(\sG_{p_{i+1}})/2\Big)\leq C( f^{-i}+\log
  n/n),$$ and for very $1\le i< T'$,
  $$\pr\Big(\delta(\sG_{q_i})\le
  \d(\sG_{q_{i+1}})/2\Big)\leq C( f^{-i}+n^{-1}).$$
\end{claim}
By Lemma~\ref{lem:newMindeg} (i) and from the monotonicity of $\delta(\sG_p)$ with respect to $p$, a.a.s.\ for all $p<p_T=0.9\log n(n-1)$, we have $\delta(\sG_p)=0$ and thus $\delta(\sG_p)\le \d(\sG_p)/2$ holds.
 By Claim~\ref{claim:couple}, with probability at least
 $$
 1-\sum_{1\le i<T}C(f^{-i}+\log n/n)=1-o(1),
 $$
for all $1\le i< T$ and for all $p_{i+1}\le p\le p_i$,
$$
\delta(\sG_p)\le \delta(\sG_{p_i}) \le \d(\sG_{p_{i+1}})/2\le \d(\sG_p)/2.
$$
Thus, a.a.s.\ $\delta(\sG_p)\le \d(\sG_p)/2$ for all $p\le \frac{\beta
  (\log n -\log\log n/2)-\omega(1)}{n-1}$, since $f$ is an arbitrary slowly growing function in $\omega(1)$,
and statement (i) follows.

By~(iii) (with $\theta=3/4$), we only need to prove that a.a.s.\ $\delta(\sG_p)>\d(\sG_p)/2$ for all $p$ satisfying $q_1 \le p\le q_{T'}=\rho\log n/(n-1)$.
 Similarly as in the previous argument, with probability at least
$$
1-\sum_{1\le i<T'}C(f^{-i}+n^{-1})=1-o(1),
$$
for all $1\le i<T'$ and for every $p$ with $q_i\le p\le q_{i+1}$,
$$
\delta(\sG_p)\ge \delta(\sG_{q_i}) > \d(\sG_{q_{i+1}})/2\ge \d(\sG_p)/2.
$$
Thus, a.a.s.\ $\delta(\sG_p)>\d(\sG_p)/2$ for all $p\ge \frac{\beta (\log n -\log\log n/2)+\omega(1)}{n-1}$, as required in statement~(ii).

Finally, we prove Claim~\ref{claim:couple}.
In this argument the asymptotic statements are uniform for all $p\in[p_T,p_1]\cup[q_1,q_{T'}]$.
  By Lemma~\ref{lem:avgdeg}, for $\sigma=n^{-1/3}$ and a positive constant $A$,
\begin{equation}\label{eq:sad}
  \pr(|\d(\sG_{p})-p n|>\sigma p n) \le \exp(-A\sigma^2 n^2 p) = o (n^{-1}).
\end{equation}
Note that the event
$\delta(\sG_{p_i})> \frac{1-\sigma}{2}p_{i+1}n$ may be written as $\delta(\sG_{p_i})> \frac{1+\eps_i}{2}p_{i}(n-1)$ for some negative $\eps_i=-\Theta(f/\log n)$.
  Hence, using~\eqref{eq:sad} and also Lemma~\ref{lem:min_ranges} with $\eps=\eps_i$ and $h(n)=-\beta\log\log
  n/2-f^2-if$, we get that, for every $1\le i<T$,
\begin{eqnarray*}
\pr\left(\delta(\sG_{p_i})> \frac{\d(\sG_{p_{i+1}})}{2}\right)&\le& \pr\Big(\delta(\sG_{p_i})> \frac{1-\sigma}{2}p_{i+1}n\Big)+\pr\Big(\d(\sG_{p_{i+1}})<(1-\sigma)p_{i+1} n \Big)\\
&=& O\left(\frac{\log n}{n}+\exp\left(\frac{-f^2-if}{\beta}+O(f)\right)\right) + o(n^{-1}) =O(f^{-i}+\log n/n).
\end{eqnarray*}
Similarly,
we write
$\delta(\sG_{q_i}) \le \frac{1+\sigma}{2}q_{i+1}n$ as $\delta(\sG_{q_i}) \le \frac{1+\eps'_i}{2}q_{i}(n-1)$ for some $\eps'_i=\Theta(f/\log n)$.
Using again~\eqref{eq:sad} and Lemma~\ref{lem:min_ranges} with
$\eps=\eps'_i$ and $h(n)=-\beta\log\log n/2 + f^2+if$, we obtain,
for every $1\le i<T'$,
\begin{align*}
\pr\left(\delta(\sG_{q_i})\le \frac{\d(\sG_{q_{i+1}})}{2}\right)&\le \pr\Big(\delta(\sG_{q_i})\le \frac{1+\sigma}{2} q_{i+1}n\Big)+\pr\Big(\d(\sG_{q_{i+1}})>(1+\sigma) q_{i+1}n\Big)\\
&= O\left(\exp\left(\frac{-f^2-if}{\beta}+O(f)\right)\right) + o(n^{-1}) =O(f^{-i}+1/n).
\qedhere
\end{align*}
\end{proof}

\begin{proof} [Proof of Theorem~\ref{thm:process}]
We first prove statement~(ii).
Let $p_1=\frac{(1-\eps/2)\beta \log n}{n-1}$ and let $p_2=\frac{(1+\eps/2)\beta \log n}{n-1}$. For $i=1,2$, the number of edges in $\sG(n,p_i)$ is distributed as $\bin(\binom{n}{2},p_i)$. By Lemma~\ref{lem:avgdeg}, we have that a.a.s.\ $m(\sG(n,p_1))\ge  \frac{1-\eps}{1-\eps/2} p_1 \binom{n}{2}  = (1-\eps)\beta n\log n/2$, and $m(\sG(n,p_2))\le  \frac{1+\eps}{1+\eps/2} p_2 \binom{n}{2}  = (1+\eps) \beta n\log n/2$. Then, Theorem~\ref{thm:process} (ii) follows immediately from Theorem~\ref{thm:continuousCases}.

We now proceed to prove statement (i) of
Theorem~\ref{thm:process}. Recall that for any graph $G$,
$t(G)=\min\{\delta(G),\d(G)/2\}$. First, define $p_0=0.9\log n/(n-1)$,
$p_1=\gamma_1\log n/(n-1)$ and $p_2=\gamma_2\log n/(n-1)$, for some
constants $1<\gamma_1<\gamma_2$ that we specify later. We prove the
statement separately for $(\sG_p)_{p_0\le p\le p_1}$, $(\sG_p)_{p_1\le
  p\le p_2}$ and $(\sG_p)_{p_2\le p\le1}$. For $(\sG_p)_{0\le p\le
  p_0}$ it is trivially true since a.a.s.\ $\delta(\sG_p)=0$ for all
$0\le p\le p_0$, by Lemma~\ref{lem:newMindeg} (i) and the monotonicity of $\delta(\sG_p)$ with respect to $p$.

\paragraph{Part 1 (${p_0\le p\le p_1}$):}
Let $\epsilon>0$ be a constant chosen to satisfy Lemma~\ref{lem2:small} with  $\gamma=1.1$. Pick a sufficiently small constant $1<\gamma_1<1.1$ and recall $p_0=0.9\log n/(n-1)$ and $p_1=\gamma_1\log n/(n-1)$. From Lemma~\ref{lem:avgdeg}, a.a.s.
\begin{equation}\label{eq:dp01}
\d(\sG_{p_1})\le(4/3)\d(\sG_{p_0}).
\end{equation}
Moreover, in view of Lemma~\ref{lem:newMindeg}~(ii), we assume that $\gamma_1$ is small enough  so that a.a.s.
\begin{equation}\label{eq:deltap1dp0}
\delta(\sG_{p_1})\le(\eps/16)\d(\sG_{p_1})\le(\eps/12)\d(\sG_{p_0}).
\end{equation}
Colour edges in $\sG_{p_1}$ so that all edges in $\sG_{p_0}$ are
coloured red and all edges in $\sG_{p_1}\setminus \sG_{p_0}$ are
coloured blue, as described in the beginning of the section. For each
vertex $v\in \sG_{p_1}$, the red (blue) degree of $v$ is the number of
red (blue) edges incident with $v$.  Let $S$ be the set of
$\eps$-light vertices of $\sG_{p_0}$. Since $\delta(\sG_{p_0})=0$
a.a.s., the $\eps$-light vertices are a.a.s.\ precisely those vertices
with degree at most $\epsilon \d(\sG_{p_0})$ in $\sG_{p_0}$. By the
choice of $\epsilon$ and Lemma~\ref{lem2:small}, a.a.s.\ the vertices
in $S$ induce no edges and have no common neighbours in the
whole process $(\sG_p)_{p_0\le p\le p_1}$ as blue edges are added.

\remove{We claim that a.a.s.\ this remains true for $S$ in the
whole process $(\sG_p)_{p_0\le p\le p_1}$ as blue edges are added. It
is enough to verify this claim for $\sG_{p_1}$ once all the blue edges
have been added. The proof involves a similar computation to that in
the proof of Lemma~\ref{lem:small}: we change~\eqref{eq:small} to:
\begin{equation*}
  \begin{split}
    \pr_{\sG(n,p_1)}(d_v\le \eps\d(\sG(n,p_0)))
    &\leq
    \pr_{\sG(n,p_1)}(d_v\le 4\eps\gamma_1\log n)\\
    &\leq
    \exp\left(-p_1n(1-\alpha\log(e/\alpha))-\frac{1}{2}\log\log n+O(1)\right),
      \end{split}
\end{equation*}
and the rest of the argument holds for $\eps$, since $\eps$ was chosen
sufficiently small for all $p \leq \gamma \log n/(n-1)$ and
$\gamma > \gamma_1$. This proves the claim
about $S$.}

For each $p_0\le p\le p_1$, let $S_p$ be the set of
$(11\eps/16)$-light vertices of $\sG_p$ (i.e.\ vertices of
degree at most $\delta(\sG_p)+(11/16)\eps \d(\sG_p)$ in
$\sG_p$). Note that a.a.s.\ $S$ contains $S_p$
for all $p$ in this range, since for any $v\in S_p$, its degree in
$\sG_{p_0}$ (i.e.\ the red degree of $v$ in $\sG_{p}$) is at most
\[
\delta(\sG_p)+(11/16)\eps \d(\sG_p) \le
\delta(\sG_{p_1})+(11/16)\eps \d(\sG_{p_1}) \le
 (\eps/12)\d(\sG_{p_0})+(11/12)\eps \d(\sG_{p_0}) = \eps\d(\sG_{p_0}),
\]
where we used~\eqref{eq:dp01} and~\eqref{eq:deltap1dp0}.

We just showed that a.a.s.\ in $(\sG_p)_{p_0\le p\le p_1}$ the set of
$(11\eps/16)$-light vertices of $\sG_p$ induce no edges
and have no common neighbours. Moreover, from~\eqref{eq:deltap1dp0} and by
monotonicity of $\delta(\sG_p)$ and $\d(\sG_p)$ with respect to $p$,
we have that a.a.s.
\[
\delta(\sG_p)\le \delta(\sG_{p_1})\le
(\eps/12)\d(\sG_{p_0})\le(\eps/12)\d(\sG_p)
\]
in the whole process $(\sG_p)_{p_0\le p\le p_1}$.  Putting all that
together, we have that a.a.s.\ the conditions of Proposition~\ref{prop:a} are
satisfied in $(\sG_p)_{p_0\le p\le p_1}$ (replacing $\eps$ by
$(11/16)\eps$), and therefore a.a.s.\ $T(\sG_p)=\delta(\sG_p)$
simultaneously for all $p$ in this range.

\paragraph{Part 2 (${p_1\le
  p\le p_2}$):}
Recall that $p_1=\gamma_1\log n/(n-1)$ and $p_2=\gamma_2\log n/(n-1)$,
where $\gamma_1$ is as in Part 1, and $\gamma_2>\gamma_1$ is a
sufficiently large constant. In view of
Theorem~\ref{thm:continuousCases} (iii), we assume that $\gamma_2$ is
large enough so that a.a.s.\ $\delta(\sG_p)\ge(3/4)\d(\sG_p)$ in the
whole process $(\sG_p)_{p_2\le p\le 1}$.

Define $q_i=(1+1/\log n)^ip_1$ for each $i=0,1,2,\ldots$, and let $T$ be the smallest integer such that $q_T\ge p_2$. Redefine $q_T=p_2$. We have $T\le 2\log(\gamma_2/\gamma_1)\log n=O(\log n)$, since eventually
\[
(1+1/\log n)^{2\log(\gamma_2/\gamma_1)\log n} > \gamma_2/\gamma_1.
\]
To prove the statement for $(\sG_p)_{p_1\le p\le p_2}$, it suffices to see that for every $0\le i\le T-1$, we have $T(\sG_p)=\floor{t(\sG_p)}$ throughout the process $(\sG_p)_{q_i\le p\le q_{i+1}}$ with probability at least $1-1/\log^2n$, and then simply take a union bound over all $i$.

Let $\epsilon$ be as in Lemma~\ref{lem2:small} (putting $\gamma=\gamma_2$), and fix $0\le i\le T-1$. We verify that with probability at least $1-1/\log^2n$ all conditions (a')--(e') of Proposition~\ref{prop:b} are satisfied in $(\sG_p)_{q_i\le p\le q_{i+1}}$. We colour as before the edges of $\sG_{q_i}$ red, and the additional edges in $\sG_{q_{i+1}}\setminus\sG_{q_i}$ blue.

Let $S$ be the set of vertices that are $\eps$-light in $\sG_{q_i}$ (they have red degree at most $\delta(\sG_{q_i})+\eps \d(\sG_{q_i})$). For each $q_i\le p\le q_{i+1}$, define $S_p$ to be the set of vertices that are $\eps/2$-light in $\sG_p$.
From Lemma~\ref{lem:avgdeg} \remove{Lemma~\ref{lem:min_ranges_easy} (with constants $\eta$ and $\alpha$ such that $\gamma_2(1-\alpha\log(e/\alpha))<1$)}
 and Lemma~\ref{lem:mindeg_near}, we have that
\begin{equation}\label{eq:deltadq}
  \d(\sG_{q_i})\sim\d(\sG_{q_{i+1}})
  \qquad\text{and}\qquad
\delta(\sG_{q_{i+1}}) \le (1+\eps/3) \delta(\sG_{q_i})
\end{equation}
with probability at least $1-n^{-C}$, for some small enough constant $C>0$ not depending on $i$.
These equations imply that $S\supseteq S_p$ for all $p$ in our range, since the red degree of any vertex in $S_p$ is at most
\[
\delta(\sG_p)+(\eps/2) \d(\sG_p) \le \delta(\sG_{q_{i+1}})+(\eps/2) \d(\sG_{q_{i+1}})
 \le (1+\eps/3)\delta(\sG_{q_{i}})+(\eps/2 - o(1)) \d(\sG_{q_{i}})  \le \delta(\sG_{q_{i}})+\eps \d(\sG_{q_{i}}),
\]
where we also used the trivial fact that $  \delta(\sG_{q_{i}})\le\d(\sG_{q_i})$.
By Lemma~\ref{lem2:small},  with probability at least
$1-n^{-C}$ the vertices in $S$ do not get common neighbours or
induced edges as the blue edges are added in $(\sG_p)_{q_i\le p\le
  q_{i+1}}$. This implies condition (a') replacing $\eps$ by
$\eps/2$.
 By Lemma~\ref{lem:newMindeg} (iii), there
exists a constant $\sigma > 0$ such that, uniformly for all $p\in[p_1,p_2]$, we
have that $\delta(\sG_p) \geq \sigma pn$ with probability at least
$1-n^{-C}$.
Therefore, $t(\sG_{q_{i}})\geq \sigma' q_{i+1}n$
 with probability at least
$1-n^{-C}$ for a
positive constant $\sigma'$ not depending on $i$.
By applying Lemma~\ref{lem:sets} to $\sG_{q_{i+1}}$ with $\alpha<\eps \sigma'/4$, we deduce that condition
(e') holds with probability at least $1-n^{-C}$ during all the process $(\sG_p)_{q_i\le p\le q_{i+1}}$. This determines our choice of $\zeta$.
Next, observe that $\sG_{q_i}$ satisfies condition~(b') with probability at least $1-e^{-C(\log n)^{1/3}}$, by Lemma~\ref{lem:deg}, and also condition~(c') with probability at least $1-n^{-C}$, by Lemma~\ref{lem:expansion_large}. Therefore, in view of~\eqref{eq:deltadq}, both Conditions~(b') and~(c') hold simultaneously in all $(\sG_p)_{q_i\le p\le q_{i+1}}$ with probability at least $1-e^{-C(\log n)^{1/3}}$. Condition (d') holds trivially for sets $S$ of size $1$. For larger sets, 
Lemma~\ref{lem:expansion_easy} applied to $\sG_{q_i}$ together with~\eqref{eq:deltadq} imply that this condition holds
in all $(\sG_p)_{q_i\le p\le q_{i+1}}$ with probability $1-n^{-C}$.

Taking the union bound for all $0\le i\le T-1 =O(\log n)$, we have that a.a.s.\ $T(\sG_p)=\floor{t(\sG_p)}$ throughout the process $(\sG_p)_{p_1\le p\le p_2}$ by Proposition~\ref{prop:b}.

\paragraph{Part 3 (${p_2\le p\le1}$):}
Let $\gamma_2$ be as in Part 2, and $p_2=\gamma_2\log n/(n-1)$. Recall
from the definition of $\gamma_2$ that a.a.s.\
$\delta(\sG_p)\ge(3/4)\d(\sG_p)$ in the whole process $(\sG_p)_{p_2\le
  p\le 1}$, and therefore Condition~(a') in Proposition~\ref{prop:b} holds.
Define $q_i=(1+1/\log n)^ip_2$ for each $i=0,1,2,\ldots$, and let $T$
be the smallest integer such that $q_T\ge 1$. Redefine
$q_T=1$. Observe that $T\le 3\log^2n$, since eventually $(1+1/\log
n)^{3\log^2n}\ge n^{2.5}$. The same argument as in Part~2 shows that
for every $0\le i\le T-1$, Conditions (b')--(e') in Proposition~\ref{prop:b}
are satisfied throughout the process $(\sG_p)_{q_i\le p\le q_{i+1}}$
with probability at least $1-1/\log^3n$. Taking the union bound over
all $i$, we conclude that a.a.s.\ all condition in Proposition~\ref{prop:b}
hold and therefore $T(\sG_p)=\floor{\d(\sG_p)/2}$, during the whole process
$(\sG_p)_{p_2\le p\le 1}$.
\end{proof}


\section{Proof of Theorem~\ref {thm:arboricity-hitting}}
\label{sec:arboricity}

We first prove statement~(ii). In view of Theorem~\ref{thm:process}, we
assume that $T(G_m)=\min\{\delta(G_m),\lfloor m/(n-1)\rfloor\}$ for
all $m=0,1,\ldots,\binom{n}{2}$. Then we pick any $m$ such that
$\delta(G_m)\ge \d(G_m)/2=m/(n-1)$. If $n-1$ divides $m$, then
$T(G_m)=m/(n-1)$ and thus $A(G_m)=m/(n-1)$. If $n-1$ does not divide
$m$, let $m'$ be the smallest integer $m'>m$ divisible by
$n-1$. Since, the minimum degree is always an integer, we have
$\delta(G_m)\ge \lceil m/(n-1)\rceil = m'/(n-1)$. Moreover, $G_m$ is a
spanning subgraph of $G_{m'}$ and thus
$\delta(G_{m'})\ge\delta(G_m)\ge m'/(n-1)$. Therefore, from our
assumption on the random graph process, we have
$T(G_{m'})=m'/(n-1)=\lceil m/(n-1)\rceil$, and these $\lceil
m/(n-1)\rceil$ edge-disjoint spanning trees cover all edges of $G_m$,
so $A(G_m)=\lceil m/(n-1)\rceil$. This completes the proof of the
statement.

Next we proceed to prove statement~(i). Let $f$ be any function of $n$
such that $f\to\infty$ arbitrarily slowly and $f=o(\log n)$. Define
$p_j=(1+1/f)^jf/n$ for each $j=0,1,2,\ldots$, and let $T$ be the
largest integer such that $p_T\le\frac{\beta\log n}{(1-\epsilon/2)n}$.
\begin{claim}\label{claim:1} for every $0\le j<T$ and for every $m$
  such that $p_j\le p(m)\le p_{j+1}$, the bound $A(\sG_{p(m)})=A(G_m)
  \le \ceil[\big]{\frac{m+\phi_2}{n-1}}$ holds in the random graph
  process $(\sG_{p})_{p\in[p_j,p_{j+1}]}$ with probability at least
  $1-1/(p_jn)^2$.
\end{claim}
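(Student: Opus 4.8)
The plan is to show that, with probability at least $1-1/(p_jn)^2$ over the slice $(\sG_p)_{p\in[p_j,p_{j+1}]}$, a single ``good event'' $\mathcal E_j$ holds under which, for every $m$ with $p_j\le p(m)\le p_{j+1}$, the graph $G_m$ is contained in a graph $G_m^+$ on $[n]$ with exactly $k(n-1)$ edges that decomposes into $k$ edge-disjoint spanning trees, where $k\eqdef\ceil{(m+\phi_2)/(n-1)}$. Since those $k$ spanning trees are forests covering $G_m^+\supseteq G_m$, this yields $A(G_m)\le A(G_m^+)=k$, as required. (The union bound over $j$ is then harmless, since $\sum_{j<T}1/(p_jn)^2=\sum_{j<T}(f(1+1/f)^j)^{-2}=O(1/f)\to0$.) Because $\size{E(G_m^+)}=k(n-1)$, Theorem~\ref{thm:nw} tells us $G_m^+$ has such a decomposition iff every vertex partition satisfies~\eqref{eq:nw_condition} with $t=k$; rewriting this (using $\size{E(G_m^+)}=k(n-1)$), it is equivalent to: for every nonempty $T\subsetneq[n]$, $\sum_{v\in T}(\deg_{G_m^+}(v)-k)\ge\size{E_{G_m^+}[T]}$ (which includes $\delta(G_m^+)\ge k$ as the case $\size T=1$).

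To build $G_m^+$, fix a small constant $\eps'=\eps'(\eps)>0$, call a vertex \emph{light} when its $G_m$-degree is below $(1+2\eps')k$, and let $L$ be the set of light vertices. First, for each $v\in L$ add $k$ new edges from $v$ to $k$ distinct non-light vertices, arranged so that each non-light vertex receives at most one such ``boosting'' edge (possible by a bipartite degree-sequence argument once $k\size L\le n-\size L$, and avoiding multiplicities since $\Delta(G_m)=O(\log n)$ by Lemma~\ref{lem:maxdeg}); then add a set $F_2$ of $k(n-1)-m-k\size L$ further edges inside $[n]\setminus L$, of bounded maximum degree. Now $\deg_{G_m^+}(v)=\deg_{G_m}(v)+k$ for every $v\in L$ — so its \emph{slack} $\deg_{G_m^+}(v)-k$ equals $\deg_{G_m}(v)$ — every non-light vertex has $G_m^+$-degree at least $(1+2\eps')k$, and $\delta(G_m^+)\ge k$. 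The only feasibility condition is the budget $k\size L\le k(n-1)-m$, and $k(n-1)-m\ge\phi_2$ always. This is exactly where $\beta$ enters: since $k\sim m/(n-1)\sim pn/2$ and $1-\tfrac12\log(2e)=1/\beta$, Lemma~\ref{lem:binomial} gives $\size L\le n\exp(-(1-O(\eps'))\,pn/\beta)$, so $k\size L\le pn\cdot n\exp(-(1-O(\eps'))\,pn/\beta)=o(\phi_2)$ once $\eps'$ is small enough in terms of $\eps$, because the common rate $1/\beta$ is multiplied by $1-O(\eps')$ on the left and by the larger deficit $1-\eps$ in $\phi_2$.

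I would verify the partition condition for $G_m^+$ by splitting a nonempty $T\subsetneq[n]$ into its light part $T_L$ and non-light part $T_H$. Since boosting edges go from $L$ to $[n]\setminus L$ and $F_2$ lives on $[n]\setminus L$, we have $\size{E_{G_m^+}[T]}\le\size{E_{G_m}[T]}+O(\size{T_H})$; on the other hand $\sum_{v\in T}(\deg_{G_m^+}(v)-k)\ge\sum_{v\in T_L}\deg_{G_m}(v)+2\eps'k\size{T_H}\ge 2\size{E_{G_m}[T_L]}+\size{E_{G_m}(T_L,T_H)}+2\eps'k\size{T_H}$. When $\size T\le\zeta n$, Lemma~\ref{lem:sets} with $\alpha<\eps'$ bounds $\size{E_{G_m}[T]}\le\alpha pn\size T$, and combining the two estimates the slack dominates $\size{E_{G_m^+}[T]}$: the surplus $2\size{E_{G_m}[T_L]}$ swallows the light–light edges (this is where slack $=\deg_{G_m}(v)$ is essential, as it makes no assumption on the subgraph spanned by $L$), and $2\eps'k\size{T_H}$ swallows $(\alpha pn+O(1))\size{T_H}$. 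The same computation gives $\size{E_{G_m^+}[S]}\le k(\size S-1)$ whenever $2\le\size S\le\zeta n$; for $\size S\ge(1-\zeta)n$ this is just the $T$-inequality with $T=[n]\setminus S$; and for $\zeta n\le\size S\le(1-\zeta)n$ it follows directly, since $\size{E_{G_m}[S]}\le\binom{\size S}2 p\le\tfrac{\size S(\size S-1)}{n}k\le k(\size S-1)$ with room to spare (by a Chernoff-plus-union-bound estimate, using $G_m\subseteq\sG_{p_{j+1}}$, with Lemma~\ref{lem:expansion_large} available if needed), while $\size{E_F[S]}\le\size F=O(n)$ is negligible next to $k\size S$.

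To obtain all of this simultaneously over the slice within probability $1-1/(p_jn)^2$, I would control everything through the two endpoints: for every $m$ in the slice, $L_m\subseteq\hat L_j\eqdef\{v:\deg_{\sG_{p_j}}(v)<K_{j+1}\}$, where $K_{j+1}$ is the largest value of $(1+2\eps')k(m)$ over the slice, so $k(m)\size{L_m}\le K_{j+1}\size{\hat L_j}$ with $\size{\hat L_j}$ concentrated by Lemma~\ref{lem:secondm} and Chebyshev; all density and expansion bounds follow from Lemmas~\ref{lem:sets} and~\ref{lem:expansion_large} on $\sG_{p_{j+1}}$ via $G_m\subseteq\sG_{p_{j+1}}$; and Lemmas~\ref{lem:avgdeg},~\ref{lem:maxdeg},~\ref{lem:deg} handle $m(\sG_{p_i})$, $\Delta$, and the bulk of the degree distribution — each with failure probability comfortably below $1/(p_jn)^2$. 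The hard part will be the interaction of the low-degree vertices with the tight edge budget: when $pn=o(\log n)$ one even has $\delta(G_m)=0$, with a positive-density set of low-degree vertices carrying an arbitrary subgraph and no useful separation, so the $\eps$-light machinery behind Proposition~\ref{prop:b} is unavailable; the resolution is to give each light vertex slack equal to its own degree — which tolerates any subgraph on $L$, at a cost of $k$ extra edges per light vertex — and this fits inside $k(n-1)-m\ge\phi_2$ only because $\size L$ decays at precisely the rate $1/\beta$ that defines $\phi_2$.
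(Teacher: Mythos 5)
Your proposal is correct and follows the same basic strategy as the paper: bound the number of low-degree (``light'') vertices via Lemma~\ref{lem:binomial} (this is exactly where the rate $1/\beta$ and hence $\phi_2$ come from), augment $G_m$ by attaching $\Theta(k)$ new edges to each light vertex plus a small filler set so that the total edge count is a multiple of $n-1$, certify that the augmented graph decomposes into edge-disjoint spanning trees, and conclude $A(G_m)\le A(G_m^+)=k$. Where you diverge is in the certification step. The paper gives each light vertex about $\d_{j+1}\approx 2k$ new (green) edges so that the augmented graph has minimum degree comfortably above $(1+\hat\eps)\d/2$, splits the remaining vertices into medium and heavy to control where green and yellow edges may land, and then invokes Proposition~\ref{prop:b} wholesale. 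You instead verify the Tutte--Nash-Williams condition directly in the equivalent form $\sum_{v\in T}(\deg_{G_m^+}(v)-k)\ge\size{E_{G_m^+}[T]}$ (your reduction of the partition condition to this subset condition, using $\size{E(G_m^+)}=k(n-1)$, is correct --- it is the standard equivalence with Nash-Williams' arboricity criterion), and the key device is that a boosted light vertex has slack exactly equal to its original $G_m$-degree, which absorbs all light--light and light--heavy edges of $G_m$ with no assumption on the subgraph spanned by $L$. This buys you a leaner augmentation ($k$ rather than $\approx 2k$ edges per light vertex, so the budget $k\size L=o(\phi_2)$ has the same exponential margin), eliminates the medium/heavy trichotomy, and replaces the appeal to condition~(a$'$) --- which genuinely fails for your construction, since a degree-zero light vertex ends up with degree exactly $k<(1+\eps)\d/2$ --- by the tight case of the subset inequality. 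The price is that you must check the inequality in all three size regimes yourself; your case analysis ($\size T\le\zeta n$ via Lemma~\ref{lem:sets} applied to $T_H$, small $S$ directly, and the middle range by Chernoff plus a union bound) covers all subsets, and the probability accounting (Markov for $\size{\hat L_j}$ with exponential slack, the remaining lemmas contributing $n^{-C}$ or better, all below $1/(p_jn)^2$) is sound. The remaining details you defer --- feasibility of the bounded-degree filler $F_2$ and of the boosting matching avoiding existing edges --- are routine given $\Delta(G_{m_{j+1}})=O(\log n)$ and $\size L=o(n)$.
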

Assuming Claim~\ref{claim:1}, the probability that $A(G_m) \le \ceil[\big]{\frac{m+\phi_2}{n-1}}$ fails somewhere in the random graph process between $\sG(n,p_0)$ and $\sG(n,p_T)$ is at most
\begin{equation}\label{eq:fsum}
\frac{1}{f^2}\sum_{j=0}^{T-1} (1+1/f)^{-2j} \le \frac{1}{f^2}\sum_{j=0}^{\infty} (1+1/f)^{-j} = \frac{1+1/f}{f} = o(1).
\end{equation}
%


Moreover, we have that $p_T\ge\frac{\beta\log n}{(1-\epsilon/4)n}$
eventually (for $n> n_0$ depending only on $f$). Then there is $\sigma>0$ such that a.a.s.\ $m>(1+\sigma) \beta n\log n$ for every $m$ with $p(m)\ge p_T$. Then, by Theorem~\ref{thm:process} (ii), a.a.s.\ $\delta(G_m)> \d(G_m)/2$ for all $m$ with $p(m)\ge p_T$. Thus, we only need to restrict our discussion to $p_j$ with $j\le T$.

Similarly, let $T'$ be the largest integer such that $p_{T'}\le\frac{\beta\log n}{(1+\epsilon/2)n}$.
\begin{claim}\label{claim:2} for every $0\le j<T'$ and for every
  $m$ such that $p(m)\in [p_j,p_{j+1}]$, the bound $A(G_m) \ge
  \ceil[\big]{\frac{m+\phi_1}{n-1}}$ holds in the random graph process
  $(\sG_p)_{p\in [p_j,p_{j+1}]}$ with probability at least
  $1-1/(p_jn)^2$.
\end{claim}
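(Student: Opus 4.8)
The plan is to establish the lower bound via the Nash--Williams characterisation of arboricity, Theorem~\ref{thm:nw-arb}, in the form $A(G)\ge\ceil{\size{E_G[S]}/(\size S-1)}$ for every nonempty $S\subseteq V(G)$ (here $E_G[S]$ denotes the edges of $G$ with both ends in~$S$). Since the ceiling function is monotone, it suffices to produce, inside each window, a \emph{single} set $S\subseteq[n]$ with $\size{E_{G_m}[S]}/(\size S-1)\ge(m+\phi_1)/(n-1)$ simultaneously for all $m$ with $p(m)\in[p_j,p_{j+1}]$; this yields $A(G_m)\ge\ceil{(m+\phi_1)/(n-1)}$ for all such $m$.

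Here is the construction. Fix $j<T'$, put $m_j=m(\sG_{p_j})$ and $D=2m_j/(n-1)$, choose a small constant $\delta_0=\delta_0(\eps)>0$ to be fixed later, let $W$ be any set of $w\eqdef\ceil{4\phi_1(m_j)/(\delta_0 D)}$ vertices of degree at most $(1-\delta_0)p_{j+1}n/2$ in $\sG_{p_{j+1}}$, and let $S=[n]\setminus W$. For every $m$ in the window, $G_m=\sG_{p(m)}\subseteq\sG_{p_{j+1}}$, so the number of edges of $G_m$ meeting $W$ is at most $\sum_{v\in W}d_{\sG_{p_{j+1}}}(v)\le w(1-\delta_0)p_{j+1}n/2$, hence $\size{E_{G_m}[S]}\ge m-w(1-\delta_0)p_{j+1}n/2$. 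The map $m\mapsto\frac{m-w(1-\delta_0)p_{j+1}n/2}{n-1-w}-\frac{m+\phi_1(m_j)}{n-1}$ has positive derivative $\frac{w}{(n-1-w)(n-1)}$, and $\phi_1(m)\le\phi_1(m_j)$ for $m\ge m_j$; so it is enough to verify the target inequality at $m=m_j$, where (using $m_j=D(n-1)/2$ and $p_{j+1}n/D\le1+o(1)$, a consequence of $m_j=(1+o(1))p_j\binom n2$ and $p_{j+1}=(1+1/f)p_j$) it becomes $\frac{w\,[1-(1-\delta_0)(1+o(1))]}{n-1-w}\ge\frac{2\phi_1(m_j)}{D(n-1)}$; since $1-(1-\delta_0)(1+o(1))\ge\delta_0/2$ eventually and $w=o(n)$, this is implied by $w\ge4\phi_1(m_j)/(\delta_0 D)$, which holds by construction.

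It then remains to show that, with probability at least $1-1/(p_jn)^2$: (i) $m_j=(1+o(1))p_j\binom n2$ and $m(\sG_{p_{j+1}})=(1+o(1))p_{j+1}\binom n2$, so that $D=(1+o(1))p_jn$; and (ii) $\sG_{p_{j+1}}$ has at least $w$ vertices of degree at most $(1-\delta_0)p_{j+1}n/2$. Item~(i) is Lemma~\ref{lem:avgdeg} with a slowly vanishing $\tau$, say $\tau=(p_jn)^{-1/3}$, whose failure probability $2e^{-\Omega((p_jn)^{1/3}n)}$ is far below $(p_jn)^{-2}$. For~(ii), let $Y$ count such vertices; Lemma~\ref{lem:binomial}, applied with $k=\floor{(1-\delta_0)p_{j+1}n/2}$ to $d_v\sim\bin(n-1,p_{j+1})$, gives
\[
\ex Y=\Theta\!\paren[\bigg]{\frac{n}{\sqrt{p_{j+1}n}}\,e^{-c\,p_{j+1}(n-1)}},\qquad c=c(\delta_0)\eqdef1-\frac{1-\delta_0}{2}\log\frac{2e}{1-\delta_0},
\]
and a direct check shows $c(\delta_0)\to1/\beta$ as $\delta_0\to0$. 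Fixing $\delta_0$ small enough that $c<(1+\eps/4)/\beta$ and comparing with $w\le4\phi_1(m_j)/(\delta_0D)+1$, where $\phi_1(m_j)=n\,e^{-\frac{1+\eps}\beta\cdot\frac{2m_j}n}$ and $p_{j+1}(n-1)=(1+1/f)(1+o(1))\cdot\frac{2m_j}n$, the exponent in $\ex Y/(\phi_1(m_j)/D)$ equals $(1-o(1))D\bigl(\tfrac{1+\eps}\beta-c(1+o(1))\bigr)\ge\tfrac{\eps}{2\beta}D\to\infty$; combined with $\ex Y\ge n^{\Omega(1)}$ this gives $\ex Y\ge2w$ for large $n$, uniformly over $f\le p_{j+1}n\le(1+o(1))\tfrac{\beta\log n}{1+\eps/2}$. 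Finally, Lemma~\ref{lem:secondm} and Chebyshev's inequality give $\pr(Y<w)\le\pr(\size{Y-\ex Y}>\ex Y/2)\le4\paren[\big]{\tfrac{p_{j+1}}{1-p_{j+1}}+\tfrac1{\ex Y}}=O(\log n/n)+n^{-\Omega(1)}=o((p_jn)^{-2})$, using $(p_jn)^2=O(\log^2n)$. Summing the $O(1)$ failure probabilities completes the proof.

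The crux is item~(ii): one must harvest enough low-degree vertices to ``pay for'' the density surplus $\phi_1$. This is exactly where $\beta=2/\log(e/2)$ enters, as the exponential decay rate of $\pr(d_v\le\d/2)$, and the argument closes only because $\phi_1$ was defined with the strictly larger exponent $(1+\eps)/\beta$: this leaves a fixed exponential margin (of $\tfrac{2\eps}{3\beta}$ per unit of average degree) between the number of vertices of degree below $(1-\delta_0)\d/2$ and the size~$w$ we need, which is ample room to absorb the $\delta_0$-slack, the window thinness $1/f$, and the coarse second-moment concentration.
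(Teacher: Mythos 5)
Your proof is correct, but it takes a genuinely different route from the paper's. The paper never exhibits a Nash--Williams witness set: instead it takes the $A(G_m)$ edge-disjoint forests covering $G_m$, and for each of the $\ell'$ vertices of degree below $(1-\hat\eps)\d(G_{m_{j+1}})/2$ it \emph{adds} one new edge to every forest missing that vertex; the augmented graph $G'_m$ has the same arboricity but at least $m+\ell'$ edges, and the trivial bound $A(G'_m)\ge m(G'_m)/(n-1)$ (i.e.\ Nash--Williams with $S=[n]$) finishes the claim, provided $\ell'\ge\phi_1$. You instead apply Theorem~\ref{thm:nw-arb} directly to $G_m$ with the witness $S=[n]\setminus W$, deleting $w\approx 4\phi_1/(\delta_0\d)$ low-degree vertices: each deletion shrinks the denominator by $1$ while removing at most $(1-\delta_0)\d/2$ edges, strictly less than the density $\approx\d/2$, so the induced density rises by $\Theta(\delta_0\d\, w/n)$ per unit, enough to cover the surplus $\phi_1/(n-1)$. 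The two arguments are dual ways of cashing in the same probabilistic input --- the count of vertices of degree below $(1-\Theta(1))\d/2$, estimated via Lemma~\ref{lem:binomial} and concentrated via Lemma~\ref{lem:secondm} and Chebyshev, with the margin between the exponents $(1+\eps/4)/\beta$ and $(1+\eps)/\beta$ doing the work in both. Yours is arguably cleaner: it avoids constructing $G'_m$ and verifying that the augmentation preserves arboricity, needs only $w=o(\phi_1)$ witnesses rather than $\ell'\ge\phi_1$ of them, and the monotonicity of $m\mapsto\frac{m-C}{n-1-w}-\frac{m+\phi_1(m_j)}{n-1}$ together with $\phi_1(m)\le\phi_1(m_j)$ handles the whole window from a single pair of events at $p_j$ and $p_{j+1}$, exactly as the claim's uniform-in-$m$ statement requires. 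The uniformity over $j$ of your $o(1)$ terms (coming from $1/f$, $(p_jn)^{-1/3}$ and the fixed $\sigma<1$ in $\ex Y\ge n^{1-\sigma-o(1)}$) holds, so the failure probability $O(\log n/n)+n^{-\Omega(1)}=o((p_jn)^{-2})$ is legitimate.
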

By the same argument as in~\eqref{eq:fsum}, assuming Claim 2, the
probability that $A(G_m) \ge \ceil[\big]{\frac{m+\phi_1}{n-1}}$ fails
somewhere in the random graph process $(\sG_p)_{p\in [p_0,p_{T'}]}$ is
also $o(1)$.  Moreover, note that since $p_{T'}\ge\frac{\beta\log
  n}{(1+2\epsilon/3)n}$, then a.a.s.\ for every $m$ with
$p(m)\in[p_{T'},1]$, we have $2m/n\ge\frac{\beta\log
  n}{(1+3\epsilon/4)}$ by Lemma~\ref{lem:avgdeg}, and therefore
eventually $\phi_1\le1/2$ for all $m$ in this range. Hence, for all
$m$ not divisible by $n-1$, we eventually have
\[
A(G_m) \ge \ceil[\Big]{\frac{m}{n-1}} = \ceil[\Big]{\frac{m+1/2}{n-1}} = \ceil[\Big]{\frac{m+\phi_1}{n-1}}.
\]
Otherwise, for $m$ divisible by $n-1$, the condition $\delta(G_m) < \d(G_m)/2=m/(n-1)$ implies $A(G_m)>m/(n-1)$, since we cannot have a full factorisation of $G_m$  into $m/(n-1)$ spanning trees, so then
\[
A(G_m) \ge \frac{m}{n-1}+1 = \ceil[\Big]{\frac{m+1/2}{n-1}} = \ceil[\Big]{\frac{m+\phi_1}{n-1}}.
\]
Putting everything together, we showed that a.a.s.~\eqref{eq:AGPhi}
holds simultaneously for all $G_m$ in the random graph process
$\sG_{p}$ for $p$ between $f/n$ and $1$. Given any $m_0=\omega(n)$ as
in the statement, we may simply choose $f=m_0/n$. Then a.a.s.\
$m(\sG_{f/n})=m(\sG(n,f/n)) \le (3/4)fn  = (3/4)m_0$, and
statement (a) holds for the desired range of $m$. It only remains to
prove Claims~\ref{claim:1} and~\ref{claim:2}.

\begin{proof}[Proof of Claim~\ref{claim:1}]
  In this proof the asymptotic statements are uniform for all $p_j$
  and depend only on~$f$.  Given any $0\le j< T$, we consider the
  random graph process $(\sG_p)_{p\in [p_j,p_{j+1}]}$. Define
  $\delta_j=\delta(\sG_{p_j})$, $\d_j=\d(\sG_{p_j})$,
  $m_j=m(\sG_{p_j})$, $t_j=\min\{\delta_j,\d_j/2\}$.
  By~Lemma~\ref{lem:avgdeg}, we have $\d_j\sim \d_{j+1}\sim np_j$ with
  probability at least $1-Ce^{-p_jn}$ for a positive constant $C$.

Let $\hat\epsilon>0$ be a sufficiently small constant so that
\begin{equation}\label{eq:eps}
e^{-1}\left(\frac{2e}{1+2\hat\epsilon}\right)^{(1+2\hat\epsilon)/2}<e^{-(1-\epsilon)/\beta}
\qquad\text{and}\qquad
e^{-1}\left(\frac{2e}{1-\hat\epsilon}\right)^{(1-\hat\epsilon)/2}>e^{-(1+\epsilon/4)/\beta}.
\end{equation}
Colour the edges of $G_{m_j}=\sG_{p_j}$ red. Let
$G_{m_{j+1}}=\sG_{p_{j+1}}$. Colour edges in $G_{m_{j+1}}\setminus
G_{m_j}$ blue. For any vertex $v\in G_{m_{j+1}}$, define the red
(blue) degree of $v$ to be the number of red (blue) edges that are
incident with $v$. Call a vertex light if its red degree is at most
$\frac{(1+2\hat\epsilon)}{2}\d_{j+1}$. A vertex is called heavy if its
red degree is at least $(3/4)\d_{j+1}$. The vertices that are neither
light nor heavy are called medium vertices. We have that for any
constant $\alpha>0$, with probability $1-e^{-p_j n}$, by
Lemma~\ref{lem:avgdeg} and Lemma~\ref{lem:maxdeg},
\begin{equation}
\label{edges}
  \left|m_{j+1}-p_{j+1}\binom{n}{2}\right|<\alpha p_{j+1}n^2, \quad
\Delta(G_{m_{j+1}})=O(\log n),
\end{equation}
where $\Delta(G)$ denotes the maximum degree of $G$.
By Lemma~\ref{lem:binomial} with $k= \frac{1+2\hat\eps}{2} \d_{j+1}$,
the expected number of light vertices is at most
\begin{equation*}
  n\left(\frac{1}{e} \left(\frac{2e}{1+2\hat \eps}\right)^{\frac{1+2\hat\eps}{2}}+o(1) \right)^{pn}.
\end{equation*}
Thus by Markov's inequality and~\eqref{eq:eps}, the number of light
vertices is
\begin{equation}
\ell\le
\frac{n}{\d_{j+1}\exp\left(\frac{(1-\epsilon)}{\beta}\frac{2m_{j+1}}{n}\right)}=o(n),\label{lightVertices}
\end{equation}
with probability at least $1-e^{-D p_j n}$, for a positive constant
$D$.  Similarly, by Lemma~\ref{lem:binomial} with $k= \frac{3}{4} \d_{j+1}$,
with probability $1-e^{-Dp_j n}$ for a positive constant~$D$, the number of
heavy vertices is
\begin{equation}
h=n-o(n). \label{heavyVertices}
\end{equation}
For the following construction, assume
~\eqref{edges},~\eqref{lightVertices} and~\eqref{heavyVertices}
hold. We add $g=\d_{j+1}\ell$ new edges (different from the previous
red and blue edges) to $G_{m_{j+1}}$, which we colour green, in
such a way that every light vertex is incident with exactly $\d_{j+1}$
green edges; every heavy vertex is incident to at most one green edge;
and no green edge is incident to any medium vertices. (So green edges
only connect light and heavy vertices.) This can be done  by~\eqref{lightVertices}, since the
total number of green edges
\[
g \le \frac{n}{\exp\left(\frac{(1-\epsilon)}{\beta}\frac{2m_{j+1}}{n}\right)}
\]
is much smaller than the number of heavy vertices eventually. Finally, greedily
add $n$ yellow edges to $G_{m_{j+1}}$, different to all previous
red, blue and green ones, in a way that each yellow edge connects two
heavy vertices and each heavy vertex is incident with at most $3$
yellow edges (this can be done greedily since we have $h\sim n$ heavy
vertices by~\eqref{heavyVertices} and the maximum degree (adding red, blue and
green degrees together) is $O(\log n)$ by~\eqref{edges}).

We may regard the sequence of graphs $G_{m_j}\subset
G_{m_j+1},\cdots\subset G_{m_{j+1}}$ as a process in which we
sequentially add blue edges to $G_{m_j}$, so the edges of each $G_m$
are precisely the red ones together with the first $m-m_j$ blue
ones. For each $m$ in our range, we define $G'_m$ as $E(G'_m)=G_m\cup
E_g\cup E_y$, where $E_g$ is the set of green edges added to
$G_{m_{j+1}}$ and $E_y$ is an arbitrary subset of yellow edges added
to $G_{m_{j+1}}$ so that the number of edges of the resulting graph
$G'_m$ is a multiple of $n-1$. We now verify that
$G'_{m_j},\ldots,G'_{m_{j+1}}$ satisfy all conditions (a')--(e') of
Proposition~\ref{prop:b}, assuming that (\ref{edges}), (\ref{lightVertices})
and (\ref{heavyVertices}) and some additional events hold. We give bounds
on the probabilities of these events.

First observe that for all $m_j\le m\le m_{j+1}$, we have $\d(G'_m)\le
\frac{2(m_{j+1}+g+n)}{n-1} \le \d_{j+1}+2$ and similarly $\d(G'_m)\ge
\frac{2m_j}{n-1}=\d_j$, so
\begin{equation}
  \label{eq:deg_gprime}
  \d(G'_m)\sim \d_{j+1} \sim \d_j
\end{equation}
 Hence, for all $m$
in the range,
\begin{equation}\label{eq:deltaGpm}
\delta(G'_m)\ge \frac{(1+2\hat\epsilon)\d_{j+1}}{2}\ge \frac{(1+\hat\epsilon)\d(G'_m)}{2},
\end{equation}
so (a') holds and $\delta = \Omega(\d) = \omega(1)$. For any $S$, let
$d_r(S)$ denote the average red degree of $S$. By Lemma~\ref{lem:deg},
with probability $1-e^{-C(p_j n)^{1/3}}$ for a positive constant $C$,
for all $S$ with $S\ge \zeta n$, $d_r(S)\ge \d(G_{m_j})(1-o(1)) \geq
\d(G'_m)(1-o(1))$ by~\eqref{eq:deg_gprime}.  Thus, (b') holds by
noting that $d(S)\ge d_r(S)$. By applying
Lemma~\ref{lem:expansion_large} to the red edges and
using~\eqref{eq:deg_gprime}, we deduce condition (c') holds with
probablity at least $1-e^{-Cp_jn^2}$ for all $G_{m}'$.

For (e'), first note that $t(G'_{m_j})=\Omega(\d_{j+1})$. Then,
Lemma~\ref{lem:sets} applied to $G_{m_{j+1}}$ (i.e.\ only red and blue
edges) shows that with probability $1-Ce^{-(p_jn)^2}$ all sets $S$ of
size $s<\zeta n$ have at most $(\hat\epsilon/ 8 )t(G'_{m_{j}})s$ red and
blue edges inside.  Let $G''$ be obtained by adding all $g$ green
edges and all $n$ yellow edges. So $G'_m\subseteq G''$ for all $m_j\le
m\le m_{j+1}$. We bound the number of edges induced by $S$ in
$G''$. Let $s_2\le s$ be the number of heavy vertices in $S$. Each green edge induced by $S$ must be incident to one of the $s_2$ heavy
vertices inside, and the number of yellow edges induced by $S$ is at most
$3s_2$, since each heavy vertex is incident to at most $3$ of
them. Therefore, the number of green and yellow edges induced by $S$ is at
most $4s_2\le 4s$, and the total number of edges induced by $S$ in
  $G''$ (and thus, in all $G'_m$, $m_j\le m\le m_{j+1}$) is at most
$(\hat\epsilon/ 8 )t(G'_{m_{j}})s+4s\le (\hat\epsilon/ 4 )
t(G'_{m_{j}})s$, since $t(G'_{m_{j}})\to\infty$.
Thus (e') holds for
all $m_j\le m\le m_{j+1}$, since $t(G'_{m_j}) \le t(G'_m)$.

Finally, we prove (d'): Let $S$ be a set with $1\le|S|\le n/2$
(otherwise we take $\overline S$). Suppose first that $1\le |S|\le
\zeta n$. From what we proved before for (e') and~\eqref{eq:deltaGpm},
the number of edges induced by $S$ is at most $(\hat\epsilon/ 4 )
t(G'_m)s$ and so, for each $G'_m$,
\[
E(S,\overline S)\ge \delta(G'_m)s- (\hat\epsilon/ 4 ) t(G'_m)s \ge \frac{(1+\hat\epsilon)\d(G'_{m_j})}{2}s- (\hat\epsilon/ 4 ) t(G'_{m_j})s \ge t(G'_{m_j})s \ge t(G'_{m_j}).
\]
Otherwise, if $\zeta n\le |S|\le n/2$, (c') gives us what we need using only red edges.

Hence, in view of Proposition~\ref{prop:b}, with probability $1-e^{-C(p_j n)^{1/3}}
\geq 1-1/(p_j n)^2$ since $p_j n\ge f=\omega(1)$, for all $m_j\le m\le m_{j+1}$, we have
\[
T(G'_m)= m(G'_m)/(n-1) = \ceil{(m+g)/(n-1)} \le \ceil{(m+\phi_2)/(n-1)},
\]
 since by construction  $m(G'_m)$ is the smallest integer that is at least $m+g$ and is divisible by $n-1$.

This implies the claim since
\[
A(G_m)\le A(G'_m)=m(G'_m)/(n-1) \le \ceil{(m+\phi_2)/(n-1)}.
\qedhere
\]
\end{proof}

\begin{proof}[Proof of Claim~\ref{claim:2}]
  We pick a constant $\hat\epsilon>0$ as
  in~\eqref{eq:eps}. By~(\ref{edges}) and the definition of $p_j$,
    with probability $1-e^{-p_jn}$, for all
  $m_j\le m\le m_{j+1}$ we have
\begin{equation}
A(G_m)\ge \lceil m/(n-1)\rceil \ge  \d(G_{m_j})/2 > (1-\hat\epsilon)\d(G_{m_{j+1}})/2\label{AGm}
\end{equation}
and the maximum degree of $G_m$ is $O(\log n)$. Let us redefine light
vertices of $G_{m_{j+1}}$ to be vertices with degree (red degree plus
blue degree) at most $(1-\hat\epsilon)\d(G_{m_{j+1}})/2$. By
Lemma~\ref{lem:binomial} with $k =
\frac{(1-\hat\eps)}{2}\d(G_{m_{j+1}})$ and~\eqref{eq:eps}, the
expected number of light vertices in $G_{m_{j+1}}$ is at least
\begin{equation*}
  n\sqrt{\frac{1}{k}}
  \left( \frac{1}{e}\left(\frac{2 e}{(1-\hat
        \eps)}\right)^{\frac{1-\hat\eps}{2}}+o(1)
  \right)^{pn}
  \geq
 n e^{-(1+\eps'/4)pn/\beta}
  \geq
 2 n e^{-\frac{(1+\eps/4)}{\beta}\frac{2m_j}{n}},
\end{equation*}
for some constant $\eps' < \eps$ and the inequality holds since $f =
o(\log n)$.
Note that with probability at least $1-n^{-C}$, for all $j\le T'$, $2m_j\le (1+\eps/8)\beta n\log n$ and so
$$
\frac{(1+\eps/4)}{\beta}\frac{2m_j}{n}\le \frac{(1+\eps/4)(1+\eps/8)}{1+\eps/2}\log n \le \sigma\log n,
$$
for some $0<\sigma<1$, depending only on $\eps$.
 Then, by Lemma~\ref{lem:secondm} and by Chebyshev's inequality, there
are
 \[
\ell' \ge \frac{n}{\exp\left(\frac{(1+\epsilon/4)}{\beta}\frac{2m_j}{n}\right)}
\]
light vertices in $G_{m_{j+1}}$ with probability at least $1-O(\log
n/n)-n^{\sigma-1}\geq 1-1/(p_jn)^2$. By~(\ref{AGm}), with probability
at least $1-e^{-p_jn}$, these light vertices have degree in $G_m$
strictly less than $A(G_m)$ for all $m$ in the range $m_j\le m\le
m_{j+1}$ since $G_m\subseteq G_{m_{j+1}}$.

For each $G_m$ we
construct $G'_m$ as follows. Let $\mathcal F_m$ be the set of $A(G_m)$
edge-disjoint forests covering $G_m$. For every light vertex $v$ and
for every forest $F\in\mathcal F_m$, if $F$ has no edge incident to
$v$, then we add a new edge connecting $v$ to some non-light vertex,
and make this new edge be part of $F$ (this can always be done since
both $|\mathcal F_m|$ and the maximum degree are $O(\log n)$ and the
number of non-light vertices is $n-o(n)$). Observe that $G'_m$ has at
least $m+\ell'$ edges since for each light vertex $v$ we added at
least one edge as the degree of $v$ is less than $|\mathcal
F_m|=A(G_m)$. By construction, $G'_m$ and $G_m$ have the same
arboricity, so
\[
A(G_m)=A(G'_m) \ge \lceil(m+\ell')/(n-1)\rceil \ge \lceil(m+\phi_1)/(n-1)\rceil.
\qedhere
\]
\end{proof}


\section{Proof of Theorem~\ref{thm:arboricity}}
\label{sec:sparse}

\begin{lem}\label{lem:smallS} Let $G\sim \sG(n,p)$, where $p\le c/n$ for some constant $c>0$. Then there exists another constant $\alpha>0$, such that a.a.s.\ all subgraphs of $G$ with order at most $\alpha n$ have average degree at most $2.2$.
\end{lem}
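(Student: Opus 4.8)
The plan is a routine first-moment computation. First I would reduce the statement to a bound on induced edge counts: any subgraph $H\subseteq G$ with $|V(H)|=s$ satisfies $2|E(H)|/s\le 2|E[V(H)]|/s$, so it suffices to prove that a.a.s.\ $|E[S]|\le 1.1|S|$ for every $S\subseteq[n]$ with $|S|\le\alpha n$. Since a set of size $s$ induces at most $\binom s2$ edges, a violation $|E[S]|>1.1s$ is only possible when $\binom s2\ge\lfloor 1.1s\rfloor+1$, i.e.\ when $s\ge 4$; so it is enough to rule out ``bad'' sets $S$ of size $4\le s\le\alpha n$ with $|E[S]|\ge k:=\lfloor 1.1s\rfloor+1>1.1\,s$.

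Next I would estimate the expected number of bad sets. Using $\binom ns\le(en/s)^s$ and $\binom Mk\le(eM/k)^k$ with $M=\binom s2\le s^2/2$, together with $p\le c/n$ and $k>1.1s$, the expected number of bad sets of size $s$ is at most
\[
\binom ns\binom{\binom s2}{k}p^k
\le\left(\frac{en}{s}\right)^s\left(\frac{ecs}{2.2\,n}\right)^{k}
\le\left(C_0\,(s/n)^{0.1}\right)^s,
\]
where $C_0=e^{2.1}(c/2.2)^{1.1}$ and the last step uses $k\ge 1.1s$ and $\frac{ecs}{2.2n}\le\frac{ec\alpha}{2.2}<1$, valid once $\alpha$ is small. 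Now fix $\alpha$ small enough that both $\frac{ec\alpha}{2.2}<1$ and $C_0\alpha^{0.1}\le 1/2$. Summing the displayed bound over $4\le s\le\sqrt n$, the base is at most $C_0 n^{-0.05}\le 1/2$ for large $n$, so this part of the sum is a geometric series bounded by $O\big((C_0 n^{-0.05})^4\big)=O(n^{-0.2})$; summing over $\sqrt n<s\le\alpha n$, each term is at most $(C_0\alpha^{0.1})^s\le 2^{-s}\le 2^{-\sqrt n}$, so this part is $O(n\,2^{-\sqrt n})$. Hence the expected number of bad sets is $o(1)$, and Markov's inequality gives the claim.

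The computation is routine and I do not foresee a genuine obstacle; the points needing a little care are the exponent bookkeeping — to obtain a base of the form $C_0(s/n)^{\Theta(1)}$ one uses that $k$ exceeds $1.1s$ by a fixed multiplicative margin, which is exactly where the gap between $2.2$ and $2$ is used (a bound of $2$ would fail, since even a single cycle has average degree $2$) — and choosing $\alpha$ small enough to validate both inequalities above simultaneously. Note that Lemma~\ref{lem:sets} is \emph{not} directly applicable here: it assumes $pn\to\infty$ and bounds $|E[S]|$ by a quantity of order $pns$, whereas in the present regime $pn=O(1)$ and we need a bound of order $s$; so the direct first-moment argument above is required.
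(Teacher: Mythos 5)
Your proposal is correct and follows essentially the same route as the paper: a first-moment bound on the number of sets $S$ of size $s\le\alpha n$ inducing more than $1.1s$ edges, with the expected count of such sets of size $s$ bounded by $\big(C(s/n)^{0.1}\big)^s$ and then summed (the paper splits the sum at $\log n$ rather than $\sqrt n$, which is immaterial). Your handling of the very small sizes $s\le 3$ and your use of $\binom{\binom s2}{\lfloor 1.1s\rfloor+1}$ in place of the paper's looser $\binom{s^2}{1.1s}$ are minor tidyings of the same argument.
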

\begin{proof}Let $X_s$ denote the number of sets $S\subseteq[n]$ with $|S|=s$ and $|E[S]|> 1.1s$. Let $r=s/n$. Then
$$
\ex X_s\le \binom{n}{s}\binom{s^2}{1.1s}\left(\frac{c}{n}\right)^{1.1s}\le \left(\frac{e}{r}\left(\frac{ern}{1.1}\frac{c}{n}\right)^{1.1}\right)^{s}=\left(Cr^{0.1}\right)^s,
$$
where $C=e^{2.1}c^{1.1}/1.1^{1.1}$ is a constant depending only on $c$. Thus, by choosing $\alpha$ sufficiently small, we have that for all $r\le\alpha$, $Cr^{0.1}<1/2$.
It follows then that
\[
\sum_{1\le s\le \alpha n}\ex X_s=\sum_{1\le s\le \log n} \ex X_s+\sum_{\log n< s\le \alpha n} \ex X_s= O\left(\frac{\log^{0.1} n}{n^{0.1}}+2^{-\log n}\right)=o(1).
\qedhere
\]
\end{proof}

For any integer $k\ge 0$ and real $\mu\ge 0$, define
$$
f_k(\mu)=e^{-\mu}\sum_{i\ge k}\frac{\mu^i}{i!}.
$$
For any $k\ge 3$, define $
h_{k}(\mu)=\frac{\mu}{f_{k-1}(\mu)}$,
and let
\begin{equation}
c_{k}=\inf\{h_{k}(\mu),\mu>0\},\ \forall k\ge 3\quad \mbox{and}\ c_2=1. \label{c}
\end{equation}
 For any $c>c_{k}$, define $\mu_{c,k}$ to be the larger solution of $h_{k}(\mu)=c$.

The following theorem follows from a result about the threshold for the appearance of a giant $k$-core, first proved by Pittel, Spencer and Wormald~\cite{PSW}, and later  re-proved by many authors. See~\cite{Kim, Janson, Molloy}.
\begin{thm}\label{thm:coreThreshold}
Let $k \ge 2$ be fixed and let $c_k$ be defined as in~\eqref{c}. Then for all
$c>c_{k}$, 
 a.a.s.\ $\sG(n,c/n)$ has a non-empty $k$-core with $f_k(\mu_{c,k}) n+o(n)$ vertices and $\frac{1}{2}\mu_{c,k}f_{k-1}(\mu_{c,k})n+o(n)$ edges. For all $k\ge 3$ and $c<c_k$, a.a.s.\ $\sG(n,c/n)$ has an empty $k$-core.
\end{thm}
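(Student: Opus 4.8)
The plan is to obtain Theorem~\ref{thm:coreThreshold} as a direct consequence of the known results on the $k$-core of $\sG(n,c/n)$; the only real task is to match the parametrisation in the statement with the one appearing in the literature. Recall that the $k$-core of a graph is its unique maximal subgraph of minimum degree at least $k$, obtained by iteratively deleting vertices of degree below $k$, and that the (non-)existence and size of the $k$-core of $\sG(n,c/n)$ were determined by Pittel, Spencer and Wormald~\cite{PSW} and reproved by several authors~\cite{Kim,Janson,Molloy}. With $f_j(\mu)=e^{-\mu}\sum_{i\ge j}\mu^i/i!=\pr(\mathrm{Po}(\mu)\ge j)$ as in the statement, the standard formulation (see in particular~\cite{Janson}) writes the threshold as $c_k=\inf_{\mu>0}\mu/f_{k-1}(\mu)=\inf_{\mu>0}h_k(\mu)$, and, for $c>c_k$, identifies the relevant fixed point as the larger root $\mu_{c,k}$ of $h_k(\mu)=c$ (equivalently, of $\mu=c\,f_{k-1}(\mu)$). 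A short calculus check, which I would include for completeness, shows that for $k\ge 3$ one has $h_k(\mu)\to\infty$ both as $\mu\to 0^+$ (since $f_{k-1}(\mu)\sim\mu^{k-1}/(k-1)!$) and as $\mu\to\infty$ (since $f_{k-1}(\mu)\to1$), so the infimum is attained and $h_k(\mu)=c$ has a well-defined larger root for every $c>c_k$; this larger root is precisely the branch that governs the core in~\cite{PSW,Janson}. Quoting those results then gives that for $c>c_k$, \aas\ $\sG(n,c/n)$ has a non-empty $k$-core with $f_k(\mu_{c,k})n+o(n)$ vertices, while for $k\ge 3$ and $c<c_k$ the $k$-core is \aas\ empty. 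I would also remark why the subcritical statement excludes $k=2$: the $2$-core transition is continuous, and for $c<1$ the $2$-core of $\sG(n,c/n)$ is \aas\ a disjoint union of $O(1)$ bounded-length cycles rather than empty, so it fails to be empty with probability bounded away from zero; the value $c_2=1$ is the customary $2$-core threshold, consistent with $\inf_{\mu>0}\mu/(1-e^{-\mu})=1$.

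To pin down the number of edges, I would use that the analysis underlying~\cite{PSW,Janson} tracks not only the order of the $k$-core but also its degree sequence, whose empirical distribution converges to that of $\mathrm{Po}(\mu_{c,k})$ conditioned to be at least $k$; in particular the sum of degrees in the core, which is twice its number of edges, is \aas\ $f_k(\mu_{c,k})\,n\cdot\ex[\mathrm{Po}(\mu_{c,k})\mid\mathrm{Po}(\mu_{c,k})\ge k]+o(n)$. Using the index shift $i\,\mu^i/i!=\mu\,\mu^{i-1}/(i-1)!$ gives $\ex[\mathrm{Po}(\mu)\,\mathbf{1}\{\mathrm{Po}(\mu)\ge k\}]=\mu\,\pr(\mathrm{Po}(\mu)\ge k-1)=\mu f_{k-1}(\mu)$, hence $\ex[\mathrm{Po}(\mu)\mid\mathrm{Po}(\mu)\ge k]=\mu f_{k-1}(\mu)/f_k(\mu)$, so the number of edges in the $k$-core is \aas\ $\tfrac{1}{2} f_k(\mu_{c,k})\,n\cdot\mu_{c,k}f_{k-1}(\mu_{c,k})/f_k(\mu_{c,k})+o(n)=\tfrac{1}{2}\mu_{c,k}f_{k-1}(\mu_{c,k})\,n+o(n)$, exactly the quantity claimed. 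Equivalently, this edge count is stated directly in~\cite{PSW} or~\cite{Janson}, modulo the same reparametrisation.

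Consequently the only genuine work is notational: verifying that $c_k$, $\mu_{c,k}$ and the function $f_k$ as defined here coincide with the (superficially different) quantities appearing in each of~\cite{PSW,Kim,Janson,Molloy}, and confirming that $\mu_{c,k}$ (the larger root) is indeed the branch controlling the core. I expect this reconciliation of conventions to be the only mildly delicate point; everything else is a verbatim application of those references.
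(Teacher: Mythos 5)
Your proposal is correct and matches the paper's treatment: the paper gives no proof of Theorem~\ref{thm:coreThreshold} at all, simply asserting that it follows from the $k$-core results of Pittel, Spencer and Wormald and their later reprovings. Your reconciliation of parametrisations and the Poisson computation $\ex[\mathrm{Po}(\mu)\mathbf{1}\{\mathrm{Po}(\mu)\ge k\}]=\mu f_{k-1}(\mu)$ yielding the stated edge count are accurate and supply detail the paper leaves implicit.
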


Cain, Sanders and Wormald~\cite{CSW} proved that for every $k\ge 2$ and $\epsilon>0$, a.a.s.\ we have that if the average degree of the $(k+1)$-core of $\sG(n,p)$ is at most $2k-\eps$, then $\sG(n,p)$ is $k$-orientable: all its edges can be oriented so that no vertex has indegree more than $k$. On the other hand, Hakimi's characterisation~\cite{Hakimi} tells that a graph is $k$-orientable if and only if it contains no subgraph whose average degree is more than $2k$. These two results immediately imply the following theorem.

\begin{thm}\label{thm:orientablity}
Given any positive integer $k\ge 2$ and an arbitrarily small $\eps>0$, a.a.s.\ we have that if the average degree of the $(k+1)$-core of $\sG(n,p)$ is at most $2k-\eps$, then there is no subgraph of $\sG(n,p)$ whose average degree is more than $2k$.
\end{thm}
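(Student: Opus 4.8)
The plan is to obtain the statement as an immediate combination of the two results recalled just above, so the ``proof'' is little more than a careful juxtaposition of known facts. First I would fix $k\ge 2$ and $\eps>0$ and let $\mathcal E_n$ denote the event, which holds a.a.s.\ by the theorem of Cain, Sanders and Wormald~\cite{CSW}, on which the following implication is valid: if the $(k+1)$-core of $\sG(n,p)$ has average degree at most $2k-\eps$, then $\sG(n,p)$ is $k$-orientable, i.e.\ its edges admit an orientation in which every vertex has indegree at most $k$.

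Second, I would invoke Hakimi's characterisation~\cite{Hakimi}, which is a purely deterministic statement about graphs: a graph is $k$-orientable if and only if each of its subgraphs has average degree at most $2k$. Putting the two together, on the event $\mathcal E_n$ we have that whenever the $(k+1)$-core of $\sG(n,p)$ has average degree at most $2k-\eps$, the graph $\sG(n,p)$ is $k$-orientable, and hence by Hakimi it contains no subgraph of average degree more than $2k$. Since $\mathcal E_n$ holds a.a.s., this is precisely the assertion of the theorem.

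There is essentially no genuine obstacle here; the only point requiring care is logical bookkeeping. The CSW implication is itself only guaranteed a.a.s., so the deduction must be carried out inside the a.a.s.\ event $\mathcal E_n$ rather than applied pointwise to an arbitrary graph; once we have restricted attention to $\mathcal E_n$, Hakimi's theorem applies deterministically and the conclusion follows at once. The degenerate case of an empty $(k+1)$-core causes no difficulty either, since (by repeatedly deleting vertices of degree at most $k$, which preserves the property of having more than $k$ times as many edges as vertices) any subgraph of average degree exceeding $2k$ must meet the $(k+1)$-core, and so an empty $(k+1)$-core already precludes the existence of such a subgraph.
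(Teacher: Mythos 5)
Your proposal is correct and matches the paper's own treatment exactly: the paper derives Theorem~\ref{thm:orientablity} as an immediate consequence of the Cain--Sanders--Wormald a.a.s.\ implication and Hakimi's deterministic characterisation of $k$-orientability, which is precisely your juxtaposition. Your extra remark about the empty-core case is sound but not needed, since the implication is vacuously compatible with Hakimi once $k$-orientability is established on the a.a.s.\ event.
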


\begin{cor}~\label{cor:subgraph}
Given any positive integer $k\ge 2$, a.a.s.\ we have that if the average degree of the $(k+1)$-core of $\sG(n,p)$ is at most $2k+o(1)$, then there is no subgraph of $\sG(n,p)$ whose average degree is more than $2k+o(1)$.

\end{cor}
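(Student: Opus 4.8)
The plan is to obtain Corollary~\ref{cor:subgraph} as a limiting form of Theorem~\ref{thm:orientablity}, letting the slack parameter $\eps$ shrink with~$n$. The one elementary ingredient is the deterministic fact that a subgraph $S$ of any graph which maximises the ratio $|E[S]|/|S|$ (equivalently, the average degree) has minimum degree at least $|E[S]|/|S|$, since deleting a vertex of smaller degree would strictly increase the ratio. Consequently, if \emph{some} subgraph of $\sG(n,p)$ has average degree larger than $2k$, then a densest subgraph has minimum degree at least $k+1$ and hence lies inside the $(k+1)$-core; in particular the $(k+1)$-core is nonempty. This disposes of the trivial case: if the $(k+1)$-core is empty then no subgraph of $\sG(n,p)$ has average degree more than $2k$, and the conclusion holds.

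For the main case I would invoke Theorem~\ref{thm:orientablity} with the given integer $k$ and with $\eps=1/j$, for each integer $j\ge1$, obtaining an a.a.s.\ event $\mathcal E_j$ asserting ``if the average degree of the $(k+1)$-core is at most $2k-1/j$, then no subgraph has average degree more than $2k$''. Since $\pr(\mathcal E_j)\to1$ for each fixed $j$, a routine diagonalisation gives a function $j(n)\to\infty$, and thus $\eps_n\eqdef 1/j(n)=o(1)$, such that $\mathcal E_{j(n)}$ holds a.a.s. Hence a.a.s.\ the bound ``average degree of the $(k+1)$-core at most $2k-\eps_n$'' already forces ``no subgraph has average degree more than $2k$'', which is stronger than claimed. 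In view of Lemma~\ref{lem:smallS} (applicable in the relevant regime $p=\Theta(1/n)$, which guarantees that only subgraphs of linear order can be dense), the only realisations left to treat are those in which the average degree of the $(k+1)$-core lies in the narrow window $(2k-\eps_n,\,2k+o(1)]$.

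Closing this window is the step I expect to be the main obstacle, because the $(k+1)$-core can no longer be fed directly into Theorem~\ref{thm:orientablity}. To handle it I would use the concentration of core parameters in Theorem~\ref{thm:coreThreshold}: for $p=\Theta(1/n)$ the average degree of the $(k+1)$-core is a.a.s.\ within $o(1)$ of a constant $D$, and the hypothesis forces $D\le 2k$; if $D<2k$ then a.a.s.\ this average degree eventually drops below $2k-\eps_n$ and we are back in the previous case. The genuinely borderline situation $D=2k$ is the hard one: here I would argue either by passing one level up (in this regime the $(k+2)$-core is either empty or has average degree bounded away from $2k+2$, so Theorem~\ref{thm:orientablity} at level $k+1$ gives every subgraph average degree at most $2k+2$, which one then sharpens to $2k+o(1)$ using that a densest subgraph of $\sG(n,p)$ lies inside the $(k+1)$-core and inherits its near-extremal density), or more robustly by a monotone coupling of $\sG(n,p)$ with $\sG(n,p')$ for $p'\downarrow p$, exploiting that the maximum subgraph average degree is a nondecreasing and asymptotically continuous function of the edge density. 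A union bound over the (finitely many) cases then finishes the proof.
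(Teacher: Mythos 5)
Your reduction is sound up to the borderline case, and you have correctly isolated where the difficulty lives: the diagonalisation of Theorem~\ref{thm:orientablity} over $\eps=1/j$, the observation that a densest subgraph sits inside the $(k+1)$-core, and the use of Lemma~\ref{lem:smallS} to restrict attention to subgraphs of linear order all match the paper's ingredients. But the step that actually closes the window at the critical density is not carried out, and your first proposed route for it contains a genuine error. Knowing that the $(k+1)$-core has average degree $2k+o(1)$ does \emph{not} let you conclude that a densest subgraph ``inherits its near-extremal density'': a graph of average degree $2k+o(1)$ and linear order can perfectly well contain a linear-order subgraph on half its vertices carrying most of its edges, hence of average degree close to $4k$. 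Bounding the density of subgraphs of the $(k+1)$-core is exactly the statement being proved, so this sharpening is circular. The detour through level $k+1$ only yields the bound $2k+2$, which is too weak.

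Your second route (``asymptotic continuity of the maximum subgraph density'') is the right idea, but it is asserted rather than proved, and the coupling is pointed the wrong way: coupling with $p'\downarrow p$, $p'>p$, gives $\sG(n,p)\subseteq\sG(n,p')$ and hence only an \emph{upper} bound by the density of the denser graph, where Theorem~\ref{thm:orientablity} at level $k$ no longer applies. The paper's resolution is to couple \emph{downward}: with $c$ the critical constant at which the $(k+1)$-core average degree equals $2k$, one realises $\sG(n,p)$ for $p\le(c+\eps)/n$ as $\sG(n,(c-\eps)/n)$ plus an independent sprinkling of $O(\eps n)$ further edges. Theorem~\ref{thm:orientablity} applies to $\sG(n,(c-\eps)/n)$ (its $(k+1)$-core has average degree at most $2k-\sigma'(\eps)$ by Theorem~\ref{thm:coreThreshold}), so the vertex set $V(H)$ of a densest subgraph $H$ of $\sG(n,p)$ induces average degree at most $2k$ before sprinkling; since $|V(H)|=\Omega(n)$ by Lemma~\ref{lem:smallS}, the $O(\eps n)$ sprinkled edges raise this by only $O(\eps)$, giving $2k+O(\eps)$, and letting $\eps\to0$ finishes the argument. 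This downward coupling plus the linear-order lemma is precisely the proof of the continuity you invoke; without it your argument does not close.
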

\begin{proof} It is easy to verify that $\mu f_{k-1}(\mu)/f_k(\mu)$ is a strictly increasing function of $\mu$, which goes to infinity as $\mu\to\infty$. It is also easy to verify that $h_k(\mu)$ is a strictly increasing function of $\mu$ for $\mu\ge \mu_{c_k,k}$. Hence, by Theorem~\ref{thm:coreThreshold}, there exists a constant $c>0$ such that a.a.s.\ the average degree of the $(k+1)$-core of $\sG(n,c/n)$ is $2k+o(1)$.

Let $\eps>0$ be an arbitrarily small constant. For any $p\ge (c+\eps)/n$, a.a.s.\ the average degree of $\sG(n,p)$ is greater than $2k+\sigma$, for some $\sigma(\eps)>0$, by Theorem~\ref{thm:coreThreshold}. Hence, we may assume that $p\le (c+\eps)/n$, and it follows by Theorem~\ref{thm:coreThreshold} that the $(k+1)$-core of $\sG(n,p)$ has average degree at most $2k+O(\eps)$.
We will prove that for every $\eps>0$ and every $p\le (c+\eps)/n$, a.a.s.\ all subgraphs of $\sG(n,p)$ have average degree at most $2k+O(\epsilon)$.
 Construct $\sG(n,p)$ by exposing each non-edge in $\sG(n,(c-\eps)/n)$ independently with probability $p'$, where $p'$ satisfies
$$
\frac{c-\eps}{n}+\left(1-\frac{c-\eps}{n}\right)p'=p\le\frac{c+\eps}{n}.
$$
Then $p'=O(\eps/n)$. Thus, a.a.s.\ $\sG(n,p)$ contains $O(\eps n)$ extra more edges than $\sG(n,(c-\eps)/n)$. Let $H$ be a densest subgraph of $\sG(n,p)$. If the average degree of $H$ is less than $3\le 2k$, we are done. Otherwise, by Lemma~\ref{lem:smallS}, a.a.s.\ $|V(H)|=\Omega(n)$.
 By Theorem~\ref{thm:coreThreshold}, there exists $\sigma'>0$ depending on $\eps$ such that the $(k+1)$-core of $\sG(n,(c-\eps)/n)$ has average degree at most $2k-\sigma'$.
By Theorem~\ref{thm:orientablity}, a.a.s.\ the subgraph of $\sG(n,(c-\eps)/n)$ induced by $V(H)$ has average degree at most $2k$. Adding $O(\eps n)$ edges to $V(H)$ will change its average degree by $O(\eps)$ because $|V(H)|=\Omega(n)$. Thus, the average degree of $H$ in $\sG(n,p)$ is at most $2k+O(\eps)$. This holds for every $\eps>0$. Hence, a.a.s.\ there is no subgraph of $\sG(n,p)$ whose average degree is more than $2k+o(1)$ for every $p\sim c/n$.
\end{proof}

\begin{proof}[Proof of Theorem~\ref{thm:arboricity}]
Part (i) follows as a corollary of Theorem~\ref{thm:arboricity-hitting}. Now we consider $p=\Theta(1/n)$. Assume $p\le c/n$ for some constant $c>0$. Then there exists $\sigma_c>0$ such that a.a.s.\ the number of vertices in $G\sim\sG(n,p)$ with degree $0$ is at least $\sigma_c n$, whereas a.a.s.\ $m(G)=(1+o(1))cn/2$. Hence, every forest contained in $G$ has at most $(1-\sigma_c)n$ edges. It follows then that $A(G)\ge m(G)/(1-\sigma_c)n\ge (1+\Theta(1))c/2$. 
Next, we prove that for all $c=c_n=\Theta(1)$, a.a.s.\ $A(\sG(n,c/n))$ is concentrated on two bounded values. This directly implies that $A(\sG(n,c/n))$ is bounded and thus, for every $c=\Theta(1)$, a.a.s.\ $A(\sG(n,c/n))=(1+\Theta(1))c/2$.

To prove that $A(\sG(n,c/n))$ is concentrated on two values,
we consider two cases.
If $\limsup_{n\to\infty}c\le 1$, then all vertices of $\sG(n,c/n)$ are contained in isolated trees except a set $S$ of $o(n)$ vertices~\cite[Theorem 4b]{ER}. By Lemma~\ref{lem:smallS}, a.a.s.\ any subset $S'\subseteq S$ contains at most $1.1|S'|$ edges. Hence, a.a.s.\ there is no subgraph $H$ of $\sG(n,c/n)$, such that $|E(H)|/(|H|-1)>2$. It follows then that $A(\sG(n,c/n))\in\{1,2\}$ in this case.
Now we assume that $\liminf_{n\to\infty} c>1$. We prove the a.a.s.\ two-value concentration of $A(\sG(n,p)$ with $p\sim c/n$ for some constant $c>1$ and our claim holds for all $p$ in this rage by the subsubsequence principle (see~\cite{JLR}). By Theorem~\ref{thm:coreThreshold}, for every $k$, such that $c_k<c$, a.a.s.\ the average degree of the $k$-core of $A(\sG(n,c/n))$ is $\mu_{c,k}f_{k-1}(\mu_{c,k})/f_k(\mu_{c,k})+o(1)$. Let
$$
K_c=\{k:\ c_k\le c,\ \mu_{c,k}f_{k-1}(\mu_{c,k})/f_k(\mu_{c,k})>2(k-1)\}.
$$
Since $c>c_2=1$, a.a.s.\ there is a giant $2$-core whose average degree is strictly greater than $2$, and thus trivially $2\in K_c$. So $K_c$ is non-empty. It is well known that $c_k$ is an increasing sequence of $k$ and $c_k\to\infty$ as $k\to\infty$. Hence, $K_c$ is finite. Let $k_c$ be the largest integer in $K_c$.
 By Theorem~\ref{thm:coreThreshold}, a.a.s.\ the $(k_c+1)$-core has average degree at most $2k_c+o(1)$ and the $k_c$-core has average degree strictly greater than $2(k_c-1)$.
By Corollary~\ref{cor:subgraph}, a.a.s.\ there is no subgraph of $\sG(n,c/n)$ whose average degree is more than $2k_c+o(1)$. Hence, the average degree of the densest subgraph is a.a.s.\ strictly greater than $2(k_c-1)$ and at most $2k_c+o(1)$. It follows immediately by Theorem~\ref{thm:nw-arb}  that $A(\sG(n,c/n))\in\{k_c,k_c+1\}$ for all $c=\Theta(1)$ such that $\liminf_{n\to\infty} c>1$.
Part (b) follows by defining $k_c=1$ for all $c$ such that $\limsup_{n\to\infty}c\le 1$.

When $p=o(1/n)$, we have that $\sG(n,p)$ is a.a.s.\ acyclic by~\cite{ER}. So a.a.s.\ $A(\sG(n,p))\le1$.
\end{proof}

\bibliographystyle{plain}
\bibliography{treepack}

\begin{thebibliography}{10}

\bibitem{Azar99}
Y.~Azar, A.~Z. Broder, A.~R. Karlin, and E.~Upfal.
\newblock Balanced allocations.
\newblock {\em SIAM journal on computing}, 29(1):180--200, 1999.

\bibitem{Bollobas01}
B.~Bollob{\'a}s.
\newblock {\em Random Graphs}.
\newblock Cambridge University Press, 2001.

\bibitem{bollobas85}
B.~Bollob{\'a}s and A.~Thomason.
\newblock Random graphs of small order.
\newblock In {\em Random graphs}, volume~83, pages 47--97, 1985.

\bibitem{CSW}
J.~A. Cain, P.~Sanders, and N.~Wormald.
\newblock The random graph threshold for {$k$}-orientability and a fast
  algorithm for optimal multiple-choice allocation.
\newblock In {\em Proceedings of the {E}ighteenth {A}nnual {ACM}-{SIAM}
  {S}ymposium on {D}iscrete {A}lgorithms (SODA)}, pages 469--476, 2007.

\bibitem{CatlinChenPalmer93}
P.~A. Catlin, Zhi-Hong Chen, and E.~M. Palmer.
\newblock On the edge arboricity of a random graph.
\newblock {\em Ars Combin.}, 35(A):129--134, 1993.

\bibitem{ChenLiLian13}
X.~Chen, X.~Li, and H.~Lian.
\newblock Note on packing of edge-disjoint spanning trees in sparse random
  graphs.
\newblock http://arxiv.org/abs/1301.1097v1.

\bibitem{Chiba85}
N.~Chiba and T.~Nishizeki.
\newblock Arboricity and subgraph listing algorithms.
\newblock {\em SIAM Journal on Computing}, 14(1):210--223, 1985.

\bibitem{Cunningham85}
W.~H. Cunningham.
\newblock Optimal attach and reinforcement of a network.
\newblock {\em Journal of the ACM}, 32(3):549--561, 1985.

\bibitem{DGMMPR}
M.~Dietzfelbinger, A.~Goerdt, M.~Mitzenmacher, A.~Montanari, R.~Pagh, and
  M.~Rink.
\newblock Tight thresholds for cuckoo hashing via {XORSAT}.
\newblock http://arxiv.org/abs/0912.0287.

\bibitem{ER}
P.~Erd{\H{o}}s and A.~R{\'e}nyi.
\newblock On the evolution of random graphs.
\newblock {\em Bull. Inst. Internat. Statist.}, 38:343--347, 1961.

\bibitem{FR}
D.~Fernholz and V.~Ramachandran.
\newblock The $k$-orientability thresholds for {$G_{n,p}$}.
\newblock In {\em Proc. ACM-SIAM Symposium on Discrete Algorithms (SODA)},
  pages 459 -- 468, January 2007.

\bibitem{FMP}
N.~Fountoulakis, M.~Khosla, and K.~Panagiotou.
\newblock The multiple-orientability thresholds for random hypergraphs.
\newblock In {\em Proceedings of the Twenty-Second Annual ACM-SIAM Symposium on
  Discrete Algorithms (SODA)}, pages 1222--�1236, 2011.

\bibitem{FP}
N.~Fountoulakis and K.~Panagiotou.
\newblock Sharp load thresholds for cuckoo hashing.
\newblock {\em Random Structures Algorithms}, 41(3):306--333, 2012.

\bibitem{FM}
A.~Frieze and P.~Melsted.
\newblock Maximum matchings in random bipartite graphs and the space
  utilization of cuckoo hash tables.
\newblock {\em Random Structures Algorithms}, 41(3):334--364, 2012.

\bibitem{FriezeLuczak90}
A.~M. Frieze and T.~{\L}uczak.
\newblock Edge disjoint spanning trees in random graphs.
\newblock {\em Period. Math. Hungar.}, 21(1):35--37, 1990.

\bibitem{GW4}
P.~Gao and N.~C. Wormald.
\newblock Load balancing and orientability thresholds for random hypergraphs.
\newblock In {\em {P}roceedings of the 2010 {ACM} {I}nternational {S}ymposium
  on {T}heory of {C}omputing (STOC)}, pages 97--103. 2010.

\bibitem{Goel06}
G.~Goel and J.~Gustedt.
\newblock Bounded arboricity to determine the local structure of sparse graphs.
\newblock In {\em Proceedings of the 32nd international conference on
  Graph-Theoretic Concepts in Computer Science}, WG'06, pages 159--167, Berlin,
  Heidelberg, 2006. Springer-Verlag.

\bibitem{Gusfield83}
D.~Gusfield.
\newblock Connectivity and edge-disjoint spanning trees.
\newblock {\em Information Processing Letters}, 16(2):87--89, 1983.

\bibitem{Hakimi}
S.~L. Hakimi.
\newblock On the degrees of the vertices of a directed graph.
\newblock {\em J. Franklin Inst.}, 279:290--308, 1965.

\bibitem{Itai88}
A.~Itai and M.~Rodeh.
\newblock The multi-tree approach to reliability in distributed networks.
\newblock {\em Information and Computation}, 79(1):43--59, 1988.

\bibitem{Janson}
S.~Janson and M.~J. Luczak.
\newblock A simple solution to the {$k$}-core problem.
\newblock {\em Random Structures Algorithms}, 30(1-2):50--62, 2007.

\bibitem{JLR}
S.~Janson, T.~{\L}uczak, and A.~Rucinski.
\newblock {\em Random graphs}.
\newblock Wiley-Interscience Series in Discrete Mathematics and Optimization.
  Wiley-Interscience, New York, 2000.

\bibitem{Kim}
J.~H. Kim.
\newblock Poisson cloning model for random graphs.
\newblock In {\em International {C}ongress of {M}athematicians. {V}ol. {III}},
  pages 873--897. Eur. Math. Soc., Z\"urich, 2006.

\bibitem{Lelarge}
M.~Lelarge.
\newblock A new approach to the orientation of random hypergraphs.
\newblock In {\em Proceedings of the Twenty-Third Annual ACM-SIAM Symposium on
  Discrete Algorithms (SODA)}, pages 251--264, 2012.

\bibitem{McDiarmidReed90}
C.~McDiarmid and B.~Reed.
\newblock Linear arboricity of random regular graphs.
\newblock {\em Random Structures Algorithms}, 1(4):443--445, 1990.

\bibitem{MRS}
M.~Mitzenmacher, A.~Richa, and R.~Sitaraman.
\newblock The power of two random choices: a survey of techniques and results.
\newblock In {\em Handbook of randomized computing}, volume I, II, pages
  255--312. Comb. Optim., 9, Kluwer Acad. Publ., Dordrecht, 2001.

\bibitem{MitzenmacherUpfal}
M.~Mitzenmacher and E.~Upfal.
\newblock {\em Probability and computing: randomized algorithms and
  probabilistic analysis}.
\newblock Cambridge University Press, 2005.

\bibitem{Molloy}
M.~Molloy.
\newblock Cores in random hypergraphs and {B}oolean formulas.
\newblock {\em Random Structures Algorithms}, 27(1):124--135, 2005.

\bibitem{NashWilliams61}
C.~St. J.~A. Nash-Williams.
\newblock Edge-disjoint spanning trees of finite graphs.
\newblock {\em J. London Math. Soc.}, 36:445--450, 1961.

\bibitem{Nash-Williams}
C.~St. J.~A. Nash-Williams.
\newblock Decomposition of finite graphs into forests.
\newblock {\em J. London Math. Soc.}, 39:12, 1964.

\bibitem{Palmer01}
E.~M. Palmer.
\newblock On the spanning tree packing number of a graph: a survey.
\newblock {\em Discrete Math.}, 230(1-3):13--21, 2001.
\newblock Paul Catlin memorial collection (Kalamazoo, MI, 1996).

\bibitem{PalmerSpencer95}
E.~M. Palmer and J.~J. Spencer.
\newblock Hitting time for {$k$} edge-disjoint spanning trees in a random
  graph.
\newblock {\em Period. Math. Hungar.}, 31(3):235--240, 1995.

\bibitem{PSW}
B.~Pittel, J.~Spencer, and N.~Wormald.
\newblock Sudden emergence of a giant {$k$}-core in a random graph.
\newblock {\em J. Combin. Theory Ser. B}, 67(1):111--151, 1996.

\bibitem{Schrijver03}
A.~Schrijver.
\newblock {\em Combinatorial optimization. {P}olyhedra and efficiency. {V}ol.
  {B}}, volume~24 of {\em Algorithms and Combinatorics}.
\newblock Springer-Verlag, Berlin, 2003.
\newblock Matroids, trees, stable sets, Chapters 39--69.

\bibitem{Tutte61}
W.~T. Tutte.
\newblock On the problem of decomposing a graph into {$n$} connected factors.
\newblock {\em J. London Math. Soc.}, 36:221--230, 1961.

\end{thebibliography}

\end{document}